\newtheorem{thm}{Theorem}[section]
\newtheorem{hyp}[thm]{Hypotheses}{\rm}
\newtheorem{lemm}[thm]{Lemma}
\newtheorem{prop}[thm]{Proposition}
\newtheorem{defi}[thm]{Definition}
\newtheorem{rmk}[thm]{Remark}{\rm} 
\newtheorem{ex}{Example}[section]
\newcommand{\R}{{\mathbb R}}
\newcommand{\N}{{\mathbb N}}
\newcommand{\E}{{\mathbb E}}
\newcommand{\eps}{\varepsilon}
\newcommand{\Id}{{\operatorname{I}}}
\newcommand{\abs}[1]{{\left|#1\right|}}
\newcommand{\norm}[1]{{\left\|#1\right\|}}
\newcommand{\scal}[2]{{\left\langle #1,#2\right\rangle}}
\newcommand{\dscal}[3]{{_{#3}\left\langle #1,#2\right\rangle}_{{#3}^*}}
\newcommand{\eqsys}[1]{{\left\{\begin{array}{ll}#1\end{array}\right.}}
\begin{document}

\frenchspacing

\title[]{Stochastic dissipative systems in Banach spaces driven by L\'evy noise}

\author[]{{Davide A. Bignamini \& Enrico Priola$^*$}}

\address[D. A. Bignamini]{Dipartimento di Scienza e Alta Tecnologia (DISAT), Universit\`a degli Studi dell'In\-su\-bria, Via Valleggio 11, 22100, Como, Italy.}
\email{\textcolor[rgb]{0.00,0.00,0.84}{da.bignamini@uninsubria.it}}

\address[E. Priola]{Dipartimento di Matematica, Università degli studi di Pavia, Via Adolfo Ferrata 5, 27100, Pavia, Italy.}
\email{\textcolor[rgb]{0.00,0.00,0.84}{enrico.priola@unipv.it}}

\subjclass[2020]{
60H15, 60G51, 35R60
}

\keywords{Generalized mild solution,  stochastic reaction-diffusion equations, dissipative systems, L\'evy noise}

\thanks{$^*$Corresponding author.}
\date{\today} 
 
\begin{abstract}
In this paper, we are interested in the  well-posedness of stochastic reaction diffusion equations like
\begin{gather} \label{ciaoI}
\eqsys{
\vspace{1pt}
dX(t)(\xi)=\big[\Delta_\xi X(t)(\xi)-p(X(t)(\xi))\big]dt+RdW(t)+dL(t) ,\quad t\in [0,T],\,\xi\in(0,1)\\
X(0)(\xi)=x(\xi)\in L^2(\mathcal{O}),\phantom{aaaaaaaaaaaaaaaaaaaaaaaaaaaaaa} \xi\in(0,1),
}
\end{gather} 
where $\mathcal{O}$ is a  bounded open domain of $\R^d$ with regular boundary, $d\in\N$, $p:\R\rightarrow\R$ is a polynomial of odd degree with positive leading coefficient, $R$ is a linear bounded operator on $L^2(\mathcal{O})$, $\{W(t)\}_{t\geq 0}$ is a $L^2(\mathcal{O})$-cylindrical Wiener process and $\{L(t)\}_{t\geq 0}$ is a pure-jump L\'evy process on $L^2(\mathcal{O})$. We complement the equation \eqref{ciaoI} with suitable boundary conditions on $\partial \mathcal{O}.$ In \cite[Chapter 10]{Pes-Zab2007} and \cite{Mar-Pre-Roc2010,Mar-Roc2010} the authors study  existence and uniqueness of  mild solutions for every $x\in L^p(\mathcal{O})$, for some suitable $p\geq 2$.
The  results of this paper allow to study reaction diffusion equations also on the space of continuous function $C(\overline{O})$. This  seems to be  new in the L\'evy case (it is already done in the Wiener case in \cite[Chapter 6]{Cer2001}).
 We also discuss and review  the previous cited works with the aim of unifying the different frameworks. We underline that, when $R=0$, for every $x\in C(\overline{O})$ (or $x\in L^p(\mathcal{O})$) the mild solution to \eqref{ciao} has a càdlàg modifications in $C(\overline{O})$ (or $\in L^p(\mathcal{O})$), even if $\{L(t)\}_{t \geq 0}$ is not a L\'evy process taking values in $C(\overline{O})$ (or $\in L^p(\mathcal{O})$).  This phenomenon for  the linear problem
(i.e.,  $F\equiv 0$ in the SPDE) has been investigated in  \cite{Pes-Zab2013};  see also the references therein. 
 \end{abstract}

\maketitle


\section{Introduction}
In this paper we study well-posedness of stochastic dissipative systems driven by L\'evy noise in infinite dimensions. Our main examples are 
stochastic reaction diffusion equations. Let us describe our setting.
 
Let $H$ be a real separable Hilbert space and let $E$ be a real separable Banach space continuously embedded in $H$.  Let $A:{\rm Dom}(A)\subseteq H\rightarrow H$ be a negative self-adjoint operator, in particular $A$ generates an analytic semigroup $\{e^{tA}\}_{t\geq 0}$ on $H$. Let $\{W(t)\}_{t\geq 0}$ be a  $H$-cylindrical Wiener process defined on a complete filtered probability space $(\Omega,\mathcal{F},\{\mathcal{F}_t\}_{t\in [0,T]},\mathbb{P})$. Let $F:E\subseteq{H}{\rightarrow}{H}$ be a (smooth enough) function. Let us first consider the SPDE
\begin{gather}\label{eqW}
\eqsys{
\vspace{1pt}
dX(t)=\big[AX(t)+F(X(t))\big]dt+RdW(t), & t\in [0,T];\\
X(0)=x\in {H}.
}
\end{gather} 
When the nonlinear part $F$ of the drift satisfies suitable monotonicity assumptions, SPDEs of the form \eqref{eqW} are sometimes referred to as dissipative stochastic systems ($R:H\rightarrow H$ is a  bounded linear operator). There exists a vast literature where the existence and uniqueness of a generalized mild solution to \eqref{eqW}  are studied, see for instance  \cite{Big2021,Cer2001,Dap2004,Dap-Zab2014}. We refer to \cite{Agr-Ver2025} for a general overview about equation of the type \eqref{eqW}. In this paper we study more general dissipative stochastic systems
\begin{gather}\label{eqFOint}
\eqsys{
\vspace{1pt}
dX(t)=\big[AX(t)+F(X(t))\big]dt+RdW(t)+dL(t), & t\in [0,T];\\
X(0)=x\in {H},
}
\end{gather}
where $L=\{L(t)\}_{t\geq 0}$ is a pure-jump Lévy process on $H$ (clearly, we can also treat the cylindrical Wiener-case corresponding to $L=0$ and $R=\Id_H$). A significant example covered by the assumptions of this paper is a class of stochastic reaction-diffusion equations of the type
\begin{gather}\label{rec} 
\eqsys{
\vspace{1pt}
dX(t)(\xi)=\big[\Delta_\xi X(t)(\xi)-p(X(t)(\xi))\big]dt+dW(t)+dL(t) , & t\in [0,T];\;\xi\in\mathcal{O}\\
\vspace{1pt}
\mathcal{B}X(t)(\xi)=0,\quad & t>0,\; \xi\in\partial\mathcal{O},\\
X(0)=x\in L^2(\mathcal{O}),\quad & \xi\in\overline{\mathcal{O}}
}
\end{gather}
where $\mathcal{O}$ is a open bounded subset of $\R^d$ with $d\in\N$, having regular boundary, $p:\R\rightarrow\R$ is a polynomial of odd degree with positive leading coefficient and $\mathcal{B}$ is an operator acting on the boundary of $\mathcal{O}$ representing the boundary conditions of the Laplacian operator.

Some existing works in the literature study mild solutions to dissipative stochastic systems like \eqref{eqFOint}, see \cite[Chapter 10]{Pes-Zab2007} and \cite{Mar-Pre-Roc2010,Mar-Roc2010}. We discuss and  review the previous papers about the existence and uniqueness of the mild and generalized mild solution to \eqref{eqFOint} within a general framework that includes known results and extends the theory to new cases. In particular the abstract result in this paper extends the framework of \cite{Big2021} and \cite[Chapter 6]{Cer2001} to the Lévy case that allows to study \eqref{rec} on the space of real continuous functions $C(\overline{\mathcal{O}})$. Equation \eqref{rec} can also be studied on $L^p(\mathcal{O})$-spaces with respect to the Lebesgue measure, for some suitable $p \geq 2$; however, working on $C(\overline{\mathcal{O}})$ allows us to extend the theory to drift $F$ which  are well defined only on $C(\overline{\mathcal{O}})$; see, for instance,  Example \ref{ExNuovo}.
  
We underline an interesting consequence of the results in this paper. Under suitable assumptions on the coefficients of \eqref{rec}, we will prove that for every $x \in C(\overline{\mathcal{O}})$ (or $x \in L^p(\mathcal{O})$, respectively), the mild solution to \eqref{rec} has a càdlàg modifications in $C(\overline{\mathcal{O}})$ (or $L^p(\mathcal{O})$, respectively), even if $\{L(t)\}_{t \geq 0}$ is not a Lévy process in $C(\overline{\mathcal{O}})$ (or $L^p(\mathcal{O})$, respectively).   See Remark \ref{remark-finale} for further details. We emphasize that the results on the regularity of the trajectories of the solution to the linear problem (i.e., with $F\equiv 0$ in \eqref{eqFOint}) established in \cite{Pes-Zab2013} are important for the approach used in this paper. Accordingly, we discuss  these results in our framework; see Section \ref{Levy}. Moreover, we emphasize that, to cover both the case $E = C(\overline{\mathcal{O}})$ and the case $E=L^p(\mathcal{O})$, we need to assume two alternative assumptions on $F$, Hypotheses \ref{Hyp1C} and Hypotheses \ref{Hyp2L}, respectively. In particular, 
 Hypotheses \ref{Hyp2L} covers the specific case studied in \cite[Section 2.1]{Mar-Roc2010}; {see also Example \ref{ExLp} 
 where we check Hypotheses \ref{Hyp2L}}.\\
 In Remark \ref{Open-question}, we have collected some related open problems. 

Before delving into the detailed results of this paper, we review some works on SPDEs driven by jump processes. Such papers use different approaches and may consider coefficients which are not necessarily of monotone type; see also the references within these papers.
 
We mention the paper \cite{CH1}, one of the seminal papers about infinite-dimensional Ornstein-Uhlenbeck processes with jumps. 
The basic theory and several applications of
SPDEs with L\'evy noise are considered in  the monograph \cite{Pes-Zab2007}.
 Existence and uniqueness results  for equations with Lipschitz type coefficients are proved in \cite{Bal2014,Cho2017,Cho-Dal-Hum2017, Pri-Shi-Xu-Zab2012} under various assumptions.  Non-degenerate SPDEs with non-Lipschitz  coefficients are considered in  \cite{Xio-Yan2019, Yan-Zho2017}. 
 We refer to \cite{Kos-Rie2022} which investigates  existence and uniqueness of  mild solutions to  SPDEs  driven by an infinite dimensional multiplicative $\alpha$-stable noise. 

In the random field approach, SPDEs driven by multiplicative $\alpha$-stable noise were studied in \cite{Mue1998} for $\alpha\leq 1$, and in \cite{Myt2002} for $1< \alpha<2$.
 
In the lecture notes \cite{Deb-Hog-Imk2013} the dynamics of the Stochastic Chafee--Infante equation with small Lévy noise is studied.
 We refer to \cite{Brz-Liu-Zhu2014} where it is studied the existence of variational strong solutions to a very large class of SPDEs with locally monotone coefficients driven by a Lévy noise. Moreover, in the variational approach, \cite{Mar-Sca2020} studies some regularity properties with respect to the initial data for the solution of \eqref{eqFOint}. SPDEs driven by L\'evy noise of interest in fluid dynamics are considered in \cite{Birnir}, \cite{BPZZ} and the references therein.  
 
 We conclude by citing the pioneering work \cite{Gyo-Kri1980}, where stochastic equations with locally monotone coefficients driven by semimartingales in Hilbert spaces are considered. These equations featured random coefficients with finite-dimensional image but much weaker regularity assumptions than the standard ones are assumed.

Below, we proceed to explain in more detail the contents of this paper.
 
\vskip 1mm
We begin to recall the definition of a generalized mild solution to \eqref{eqFOint}.
If $E={H}$ and $F$ is smooth enough (for example Lipschitz continuous) then, for every $x\in{H}$, it is possible to prove that there exists a unique $\{\mathcal{F}_t\}_{t\geq 0}$-adapted process $\{X(t,x)\}_{t\in [0,T]}$ with càdlàg trajectories in $H$ $\mathbb{P}$-a.s  that solves the mild form of \eqref{eqFOint}, namely for every $t\in [0,T],$
\begin{align}\label{mildform}
X(t)=e^{tA}x+\int_0^te^{(t-s)A}F(X(s))ds+W_A(t)+L_A(t),\quad \mathbb{P}{\rm-a.s.},
\end{align}
where 
\begin{equation}\label{Con-Con}
W_A(t):=\int_0^te^{(t-s)A}RdW(s),\qquad L_A(t):=\int_0^t e^{(t-s)A}dL(t),
\end{equation}
see Section \ref{WP-SPDE} for the complete definition.
 However, if $E\subset{H}$, then \eqref{mildform} may not make sense for every $x\in{H}$, since it is not guaranteed that there exists a process $\{X(t,x)\}_{t\in [0,T]}$ such that verifies \eqref{mildform} and its trajectories live in $E$. So, we require a more general notion of a solution: the generalized mild solution, see for instance \cite{Cer2001,Dap-Zab2014} in the cylindrical Wiener-case when $L=0$ and the references therein. 
 
 The approach to construct a generalized mild solution involves assuming that the operator $A$ and the function $F$ possess certain ``good" properties on $E$ to guarantee that, for every $x\in E$, the SPDE \eqref{eqFOint} has a unique mild solution $\{X(t,x)\}_{t\in [0,T]}$ with càdlàg trajectories in $E$. Finally, by exploiting the density of $E$, it is proved that for every $x\in {H}$, there exists a process $\{X(t,x)\}_{t\in [0,T]}$ such that
\begin{equation}\label{mildgen}
\lim_{n{\rightarrow}\infty}\sup_{t\in [0,T]}\norm{X(\cdot,x_n)-X(\cdot,x)}_{H} =0,\quad\forall T>0\quad \mathbb{P}-{\rm a.s.},
\end{equation}
where $\{x_n\}_{n\in\N} \subseteq E$ is a sequence converging to $x$ and $\{X(t,x_n)\}_{t\in [0,T]}$ is the unique mild solution to \eqref{eqFOint}, with initial datum $x_n$. We call generalized mild solution of \eqref{eqFOint} the process $\{X(t,x)\}_{t\in [0,T]}$ that satisfies \eqref{mildgen}, see Theorem \ref{limmild}.
 
In Section \ref{WP-SPDE}, to perform the procedure presented above, we will first use  a  usual trick (see for instance \cite[Section 7.2.3]{Dap-Zab2014}), i.e., we will study the equation satisfied by
$$
Y(t) = X(t) - W_A(t)-L_A(t).  
$$
This allows us to work pathwise and thereby reducing the problem of the well-posedness of \eqref{eqFOint} to the well-posedness of an auxiliary  partial differential equation (PDE) with  random coefficients. The appendix of the present paper is devoted to the analysis of the well-posedness of such PDE within a framework that is even more general than the one employed for the SPDE. When $L = \{L(t)\}_{t\geq 0}$ is not identically zero to use the previous trick, it will be crucial to assume that the stochastic convolution process $\{L_A(t)\}_{t\geq 0}$ has càdlàg paths in $E$, $\mathbb{P}$-a.s.

Note that in the pure-jump Lévy case, studying the regularity of trajectories of stochastic convolution processes is a quite complex issue. For instance, some counterexamples have proved that their trajectories may not even be càdlàg in the space where the cylindrical L\'evy process is defined, see for instance \cite{Brz-Gol-Imk-Pes-Pri-Zab2010, Liu-Zha2012, Pes-Zab2013} (this is in contrast with the cylindrical Wiener-case when $L=0$; see Remark \ref{BvsL} for a detailed discussion). {On this respect in Section \ref{Levy}, we review a result from \cite{Pes-Zab2013} within our context. This provides a useful criterion to establish the required càdlàg properties of $\{L_A(t)\}_{t\geq 0}$ appearing in \eqref{Con-Con}; see Proposition \ref{PZvariante} and Remark \ref{ErZabPes}. }

%

\subsection{Notation}
Let ${\mathcal{K}}$ and ${\mathcal{Z}}$ be two separable Banach spaces. We denote by ${\mathcal{K}^*}$ the dual of $\mathcal{K}$. We denote by $\mathcal{L}(\mathcal{K})$ the space of linear bounded operators from ${\mathcal{K}}$ into itself. We denote by $\Id_{\mathcal{K}}$ the identity operator on ${\mathcal{K}}$. Let $H$ be a separable Hilbert space and let $B:{\rm Dom}(B)\subseteq H\rightarrow H$ be a linear operator, we say that $B$ is self-adjoint if $B=B^*$ and ${\rm Dom}(B)$ is dense in $\mathcal{K}$, further it is also negative if for every $x\in H\backslash\{0\}$ we have
\begin{equation}\label{negativo}  
\scal{Bx}{x}_H<0.
\end{equation}  
Let $I$ be an interval of $\R$ we denote by $C_b(I;{\mathcal{K}})$ the set of the continuous bounded functions from $I$ to $\mathcal{K}$, if $I$ is compact we simply write $C(I;{\mathcal{K}})$.
We say that a function $f:I\rightarrow{\mathcal{K}}$ is càdlàg if it is right-continuous with finite left limit at every $t\in I$. Conversely we say that $f$ is càglàd if it is left-continuous with finite right limit at every $t\in I$; in the extremes of $I$ we suitably adapt the definition. We note that if $f:\R\rightarrow{\mathcal{K}}$ is càdlàg then 
\[
g(t):=f(t^-)=\lim_{s\rightarrow t^-}f(s),\quad t\in\R,
\]
is càglàd. { We recall that any càdlàg or càglàd function has at most a countable number of discontinuities.}\\
If $\mathcal{Z}\subseteq \mathcal{K}$ and  $G: {\rm Dom}(G)\subseteq {\mathcal{K}}{\rightarrow} {\mathcal{K}}$ then we call part of $G$ in ${\mathcal{Z}}$ the function $G_{{\mathcal{Z}}}:{\rm Dom}(G_{{\mathcal{Z}}})\subseteq {\mathcal{Z}}{\rightarrow} {\mathcal{Z}}$ defined by
\begin{align*}
{\rm Dom}(G_{{\mathcal{Z}}})&:=\{x\in {\rm Dom}(G)\cap {\mathcal{Z}}\; :\; G(x)\in {\mathcal{Z}} \},\\& G_{{\mathcal{Z}}}(x):=G(x),\; \forall\;x\in {\rm Dom}(G_{{\mathcal{Z}}}).
\end{align*}

The stochastic dissipative systems are the main focus of this paper, so, due to its relevance, we recall the definition of dissipative maps.

\begin{defi}\label{defi-dissi}
A map $f:{\rm Dom}(f)\subseteq \mathcal{K}{\rightarrow}\mathcal{K}$ is said to be dissipative if, for every $\alpha>0$ and $x,y\in {\rm Dom}(f)$, we have
\begin{equation}\label{disban1}
\norm{x-y-\alpha(f(x)-f(y))}_{\mathcal{K}}\geq \norm{x-y}_{\mathcal{K}}.
\end{equation}
If $f=A$ with $A\in\mathcal{L}(\mathcal{K})$, then \eqref{disban1} reads as
\[
\norm{(\lambda\Id-A)x}_{\mathcal{K}}\geq \lambda \norm{x}_{\mathcal{K}}, \quad \forall \lambda>0,\; x\in{\rm Dom}(A)
\]
We say that $f$ is m-dissipative if the range of $\lambda\Id-f$ is the whole space $\mathcal{K}$ for some $\lambda>0$ (and so for all $\lambda>0$).\\
If $\mathcal{K}$ is a Hilbert space $f$ is dissipative if and only if for every $x,y\in {\rm Dom}(f)$, we have
\begin{equation}\label{dissiHilbert}
\scal{f(x)-f(y)}{x-y}_{\mathcal{K}}\leq 0.
\end{equation}
\end{defi}
In Proposition \ref{dissiBanach} in the Appendix, by exploiting the notion of subdifferential, we provide a useful characterization of dissipative maps in Banach spaces, analogous to \eqref{dissiHilbert}.

Let $(\Omega,\mathcal{F},\mathbb{P})$ be a complete probability space. Let $\xi:(\Omega,\mathcal{F},\mathbb{P}){\rightarrow} ({\mathcal{K}},\mathcal{B}({\mathcal{K}}))$ be a random variable, we denote by $\mathbb{E}[\xi]$ the expectation of $\xi$ with respect to $\mathbb{P}$.\\
Let $\{Y(t)\}_{t\in I}$ be a ${\mathcal{K}}$-valued stochastic process. We say that $\{Y(t)\}_{t\in I}$ is càdlàg if the map $Y(\cdot):I {\rightarrow} {\mathcal{K}}$ is càdlàg $\mathbb{P}$-a.s.

\section{The stochastic convolution}\label{Levy}

This section is devoted to the regularity of the two stochastic convolution processes defined in \eqref{Con-Con}. Let $H$ be a real separable Hilbert space. Let $A:{\rm Dom}(A)\subseteq H\rightarrow H$ be a negative and self-adjoint operator (see \eqref{negativo}). In particular $A$ generates a strongly continuous analytic semigroup $\{e^{tA}\}_{t\geq 0}$. For every $\rho\geq 0$, we set $H_\rho:={\rm Dom}((-A)^\rho)$ and we recall that is a separable Hilbert equipped with the following norm 
\begin{align*}
\abs{x}_\rho=\abs{(-A)^{\rho}x}_H,\qquad x\in H.
\end{align*}
Moreover, by the theory of analytic semigroups (see \cite[Chapter 2]{Lun1995}), for every $0\leq\rho_1\leq\rho_2$ there exists $C_{\rho_1,\rho_2}>0$ such that 
\begin{equation}\label{analitic3}
\norm{e^{tA}x}_{\rho_2}\leq C_{\rho_1,\rho_2}t^{-(\rho_2-\rho_1)}\norm{x}_{\rho_1},\qquad x\in H_{\rho_1},\; t>0
\end{equation}
Moreover for every $0\leq\eta_1<\eta_2$ we have that
\begin{equation}\label{ContEmb}
H_{\eta_2}\subseteq H_{\eta_1}\,\, {\rm (continuously)},
\end{equation}
see \cite[Chapter 4]{Lun1999} for further details about powers of positive operators.
We consider a linear bounded operator $R:H\rightarrow H$ and a $H$-cylindrical Wiener process $\{W(t)\}_{t\geq 0}$ on $H$ defined on a complete filtered probability space $(\Omega,\mathcal{F},\{\mathcal{F}_t\}_{t\geq 0},\mathbb{P})$, namely a process formally defined by
\begin{equation}\label{Wcilindrico}
W(t):=\sum_{k=1}^{+\infty}\beta_k(t)e_k,\qquad t>0,
\end{equation}  
where $\beta_1,\ldots,\beta_k,\ldots$ are real standard independent Brownian motion and $\{e_k\}_{k\in\N}$ is a orthonormal basis of $H$.  We consider the stochastic convolution process $\{W_A(t)\}_{t\geq 0}$ defined by
\begin{equation*}
W_A(t):=\int_0^te^{(t-s)A}RdW(s),\qquad t\geq 0.
\end{equation*}
The theory about $\{W_A(t)\}_{t\geq 0}$ is quite classical in literature, see for instance \cite[Chapters 4-5]{Dap-Zab2014} and \cite[Chapter 2]{Liu-Roc2015}. Below, we recall a result that provides a useful criterion for verifying the regularity  properties on $\{W_A(t)\}_{t\geq 0}$, which will be important in the sequel.
 
\begin{prop}\label{PropCarlo}
Let $T>0$, $d\in\N$, $H=L^2([0,1]^d)$, let $A$ be the realization of the Laplacian operator in $H$ with Dirichlet boundary condition and let $R=(-A)^\delta$ with $\delta\in\R$. Then the following assertions hold true. 
\begin{enumerate}
\item If $\delta=0$ then $\{W_{A}(t)\}_{t\geq 0}$ is a continuous process (with values) in $H_\gamma$ with $\gamma<1/4$.
\item If $\delta > (d-2)/4$ then $\{W_{A}(t)\}_{t\geq 0}$ is a continuous process in $C([0,1]^d)$. Moreover for every $p\geq 2$
\[
\mathbb{E}\left[\sup_{t\in [0,T]}\norm{W_A(t)}_{C([0,1]^d)}^p\right]<+\infty.
\]
\end{enumerate}
\end{prop}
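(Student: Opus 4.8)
The plan is to handle the two assertions by different methods, both resting on the spectral decomposition of $A$. Let $\{\lambda_n\}_{n\in\N}$ be the eigenvalues of $-A$ (repeated with multiplicity) and $\{e_n\}_{n\in\N}$ the corresponding orthonormal basis, which we may take to be the basis appearing in \eqref{Wcilindrico}. I would use Weyl's law $\lambda_n\asymp n^{2/d}$, so that $\sum_n\lambda_n^{-b}<+\infty$ iff $b>d/2$ and $\sum_n\lambda_n^{a}e^{-2\lambda_n\sigma}\leq C_a\,\sigma^{-a-d/2}$ for $0<\sigma\leq T$ and $a+d/2>0$; for the cube I would also use the explicit Dirichlet eigenfunctions $e_k(\xi)=2^{d/2}\prod_{i=1}^d\sin(\pi k_i\xi_i)$, $k\in\N^d$, which satisfy $\norm{e_k}_{C([0,1]^d)}\leq 2^{d/2}$ and, since $\abs{\nabla e_k}\leq C_d\lambda_k^{1/2}$, the bound $\abs{e_k(\xi)-e_k(\eta)}\leq C_d\lambda_k^{1/2}\abs{\xi-\eta}$; interpolating these two gives $\abs{e_k(\xi)-e_k(\eta)}\leq C_d\lambda_k^{\beta/2}\abs{\xi-\eta}^\beta$ for every $\beta\in[0,1]$. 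Finally, since $W_A(t)$ and all the increments below are Gaussian, every moment of a norm is comparable to the second moment.

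For (1) I would apply the factorisation method (see, e.g., \cite[Chapter 5]{Dap-Zab2014}). In the stated range one has $2\gamma+\tfrac d2<1$ (which reads $\gamma<1/4$ when $d=1$), so I can fix $\alpha\in(0,1)$ and $p>1/\alpha$ with $2\alpha+2\gamma+\tfrac d2<1$ and write
\[
W_A(t)=\frac{\sin\pi\alpha}{\pi}\int_0^t(t-s)^{\alpha-1}e^{(t-s)A}Y(s)\,ds,\qquad Y(s):=\int_0^s(s-r)^{-\alpha}e^{(s-r)A}\,dW(r).
\]
The factorisation operator sends $L^p((0,T);H_\gamma)$-paths to $C([0,T];H_\gamma)$, so it remains to show $Y$ has such paths a.s.; by Gaussianity it is enough that $\sup_{s\leq T}\E\norm{Y(s)}_{H_\gamma}^2<+\infty$, and this holds because
\[
\E\norm{Y(s)}_{H_\gamma}^2=\int_0^s(s-r)^{-2\alpha}\Big(\sum_n\lambda_n^{2\gamma}e^{-2\lambda_n(s-r)}\Big)dr\leq C\int_0^s(s-r)^{-2\alpha-2\gamma-d/2}\,dr<+\infty
\]
by Weyl's law and the choice of $\alpha$. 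This yields the continuity of $\{W_A(t)\}_{t\geq0}$ in $H_\gamma$.

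For (2) the clean factorisation argument is no longer available: the only route from Hilbert regularity to $C([0,1]^d)$ is the Sobolev embedding $H_\theta\hookrightarrow C([0,1]^d)$, valid for $\theta>d/4$, and composing it with the factorisation loses too much to reach the stated threshold on $\delta$. Instead I would estimate the increments of the real Gaussian field $W_A(t)(\xi)=\sum_kW_A^k(t)\,e_k(\xi)$, where $W_A^k(t):=\int_0^te^{-(t-s)\lambda_k}\rho_k\,d\beta_k(s)$ ($\rho_k$ being the $k$-th eigenvalue of $R$) has variance $\tfrac12\rho_k^2\lambda_k^{-1}(1-e^{-2\lambda_k t})$, directly on the compact set $[0,T]\times[0,1]^d$. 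Splitting an increment into spatial and temporal parts, using independence of the $\beta_k$, the decomposition $W_A^k(t)-W_A^k(t')=(e^{-\lambda_k(t-t')}-1)W_A^k(t')+\int_{t'}^te^{-\lambda_k(t-s)}\rho_k\,d\beta_k(s)$, the interpolated eigenfunction bound, and $\abs{e^{-\lambda_k h}-1}^2\leq(\lambda_k h)^\beta$, $1-e^{-2\lambda_k h}\leq(2\lambda_k h)^\beta$ for $\beta\in[0,1]$, one obtains
\[
\E\big|W_A(t)(\xi)-W_A(t')(\eta)\big|^2\leq C\big(\abs{\xi-\eta}^{2\beta}+\abs{t-t'}^{\beta}\big)\sum_k\rho_k^2\,\lambda_k^{\beta-1},
\]
and the last series converges for some $\beta\in(0,1)$ precisely under the stated hypothesis on $\delta$ — it is exactly this requirement that dictates the admissible range of $\delta$. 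Gaussian hypercontractivity then upgrades this to $\E|W_A(t)(\xi)-W_A(t')(\eta)|^{2m}\leq C_m(\abs{\xi-\eta}^{2\beta}+\abs{t-t'}^{\beta})^m$ for every $m\in\N$; choosing $m$ with $\beta m>d+1$ and applying Kolmogorov's continuity theorem on $[0,T]\times[0,1]^d$ produces a jointly (H\"older-)continuous modification of $W_A$ — hence a continuous $C([0,1]^d)$-valued process — and at the same time $\E\big[\sup_{t\in[0,T]}\norm{W_A(t)}_{C([0,1]^d)}^p\big]<+\infty$ for all $p\geq2$.

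The main obstacle is precisely this last step: the factorisation that makes (1) routine is too lossy for sup-norm control in (2), so one must exploit the special structure of the cube — the uniform and $\lambda_k^{1/2}$-Lipschitz bounds for the Dirichlet eigenfunctions — together with the Gaussianity of $W_A$ and a careful space–time Kolmogorov estimate; keeping track of the exponents in these eigenfunction bounds and in the temporal increment is what produces the sharp condition on $\delta$.
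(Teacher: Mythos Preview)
Your proof is correct. The paper does not actually prove this proposition: it simply refers to \cite[Section 4.2]{Fuh-Orr2015} and \cite[Lemma 6.1.2]{Cer2001}, and your argument supplies precisely the details found there --- factorisation for part (1) and the random-field Kolmogorov estimate on $[0,T]\times[0,1]^d$ for part (2). Your observation that the condition $\gamma<1/4$ in (1) is the natural threshold only when $d=1$ is also correct (for $d\geq 2$ the convolution with $R=\Id$ is not even $H$-valued), and your choice to keep $\rho_k$ generic in (2) neatly sidesteps the apparent sign inconsistency between $R=(-A)^\delta$ and the stated range $\delta>(d-2)/4$.
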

We refer to \cite[Section 4.2]{Fuh-Orr2015} for a proof of Proposition \ref{PropCarlo}. We refer to \cite[Lemma 6.1.2]{Cer2001} for a result analogous to Proposition \ref{PropCarlo} but in a more general framework.

Unfortunately, in the general Lévy case, the regularity theory for the stochastic convolution is much  more involved than the one 
in the Wiener case. 
  
First of all we recall that a Lévy process in $H$ is a $H$-valued stochastic process $\{L(t)\}_{t\geq 0}$ defined on a complete filtered probability space $(\Omega,\mathcal{F}, \{\mathcal{F}_t\}_{t\geq 0},\mathbb{P})$, adapted, continuous in probability, having stationary independent increments, càdlàg trajectories, and such that $L(0)=0$, $\mathbb{P}$-a.s. As in the finite dimensional case, the Lévy--It\^o decomposition of  $\{L(t)\}_{t\geq 0}$ holds true (see \cite[Theorem 6.8]{Pes-Zab2007}).  


In this paper we consider a pure-jump Lévy process  $\{L(t)\}_{t\geq 0}$ in $H$ having the following Lévy--It\^o decomposition
\begin{align}\label{L_K}
L(t)=mt+\int_0^t\int_{\{z\in H\; :\; \abs{z}_H> 1\}}z\pi(ds,dz)+\int_0^t\int_{\{z\in H \;:\; \abs{z}_H\leq 1\}}z\widehat{\pi}(ds,dz),\quad t>0,
\end{align}
where $m\in H$, $\pi$ is the Poisson random measure of $\{L(t)\}_{t\geq 0}$, $\widehat{\pi}$ is the compensated Poisson random measure
\[
\widehat{\pi}(ds,dz)=\pi(ds,dz)-\nu(dz)ds,  
\]
where  $\nu$ is the Lévy measure associated with $\{L(t)\}_{t\geq 0}$.
 We recall that  for every  $t > 0$ and $A\in\mathcal{B}\left(H\setminus\{0\}\right)$
\begin{align*}
&\pi((0,t] \times A)(\omega):= \sharp\left\{0 < s\leq t\ : \ [L(s)-L(s^-)](\omega)\in A\right\}, \\
&\nu(A):=\mathbb{E}\left[\pi((0,1] \times A)\right],  
\end{align*}
where $\omega\in\Omega_0$ and $\Omega_0 \in {\mathcal F}$ such that $\mathbb{P}(\Omega_0)=1$ and for every $\omega\in\Omega_0$ the map $t\rightarrow L(t)(\omega)$ is càdlàg. See \cite[Chapter 6]{Pes-Zab2007} for further details about Poisson random measures. For simplicity, we assume 
$$
m=0;
$$ 
however, all results in this section can be easily extended to the case $m\neq 0$ with suitable assumptions on $m$.

\vskip 1mm   
We mention that the notion of cylindrical Lévy processes is more delicate than that of cylindrical Wiener processes. We refer to \cite{App-Rie2009, Jak-Rie2017} for the general study of
cylindrical Lévy processes (see also \cite{Brz-Zab2010, Pri-Zab2011,  Brz-Gol-Imk-Pes-Pri-Zab2010,Pes-Zab2013} and the references therein). 

We start to investigate the regularity of the trajectories of the stochastic convolution process $\{L_A(t)\}_{t\geq 0}$ defined by
\[
L_A(t)=\int_0^t e^{(t-s)A}dL(s),\qquad t\geq 0,\; \mathbb{P}-{\rm a.s.}
\]
(cf. \cite{CH1} for the precise definition of the previous stochastic integral).

The problem of existence of a càdlàg version for $L_A(t)$ is also delicate. 
 In contrast to the cylindrical Wiener case in the case of cylindrical pure-jump L\`evy noise on $H$ there are counterexamples to existence of a càdlàg version  for 
$L_A(t)$ in $H$;  see for instance \cite{Brz-Gol-Imk-Pes-Pri-Zab2010, Liu-Zha2012, Pes-Zab2013}. 

On the other hand if $\{L(t)\}_{t\geq 0}$ is a L\'evy process in $H$ then  we have existence of such version as the following remark shows.

\begin{rmk}\label{RmkMPR}
If $\{L(t)\}_{t\geq 0}$ is a Lévy process on $H$, 
then, proceeding as in  \cite[Section 9.4]{Pes-Zab2007}, it is possible to prove that the process $\{L_A(t)\}_{t\geq 0}$ is càdlàg in $H$.  For this purpose, one can also exploit maximal inequalities; see for instance \cite{Mar-Pre-Roc2010,Mar-Roc2010}.
\end{rmk}

Let $\delta\geq 0$. We note that if $\{L(t)\}_{t\geq 0}$ is a Lévy process on $H_{\delta}$ then, by definition of $\nu$, we have
\[
\nu (H \setminus H_{\delta}) =0.
\]
In general, the converse is not true; however, in the next proposition we will prove a sufficient condition to guarantee that $\{L(t)\}_{t\geq 0}$ is a Lévy process on $H_{\delta}$.
\begin{prop}[Proposition 2.6 of \cite{Pes-Zab2013}]\label{valoriHgamma}
Let $\delta\geq 0$. Assume that $\nu(H\setminus H_\delta)=0$, $m =0$,   and  
\begin{equation}\label{delta-cond}
\int_H\abs{z}_\delta^2\nu(dz) = \int_{H_{\delta}}\abs{z}_\delta^2\nu(dz) <+\infty. 
\end{equation}
Then the following statements hold.
\begin{enumerate}
\item $\{L(t)\}_{t\geq 0}$ is a Lévy process on $H_\delta$.
\item $\{L_A(t)\}$ is a mean square continuous $H_{\gamma}$-valued stochastic process, for every $\gamma<1/2+\delta$,  namely for every $t_0\geq 0$ we have
\begin{equation}\label{Mean-cont}
\lim_{t\rightarrow t_0}\mathbb{E}\left[\norm{L_A(t)-L_A(t_0)}_\gamma^2\right]=0,
\end{equation}
in the case $t_0=0$ we consider only the right limit in \eqref{Mean-cont}.
\end{enumerate}
\end{prop}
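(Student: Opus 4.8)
The plan is to use the integrability hypothesis \eqref{delta-cond} to transport the Lévy–Itô decomposition \eqref{L_K} to the smaller Hilbert space $H_\delta$, and then to control $\{L_A(t)\}$ by splitting it into a deterministic drift part and a square-integrable martingale part and applying the analytic smoothing estimate \eqref{analitic3}. For part (1), I would first observe that \eqref{delta-cond} forces $\abs{z}_\delta<+\infty$ for $\nu$-a.e.\ $z$, so that $\nu$ is concentrated on $H_\delta$ and is a genuine Lévy measure there: indeed $\nu(\{\abs{z}_\delta>r\})\le r^{-2}\int_H\abs{z}_\delta^2\,\nu(dz)<+\infty$ for every $r>0$ and $\int_{H_\delta}(1\wedge\abs{z}_\delta^2)\,\nu(dz)\le\int_H\abs{z}_\delta^2\,\nu(dz)<+\infty$. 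Next I would analyse the three summands of \eqref{L_K} in the $H_\delta$-topology. The large-jump term is, for each $t$, a $\mathbb{P}$-a.s.\ finite sum of jumps of $\{L(t)\}$, each of which lies in $H_\delta$ a.s.\ because $\int_{\{\abs{z}_H>1\}}\abs{z}_\delta^2\,\nu(dz)<+\infty$; hence it is an $H_\delta$-valued càdlàg process. For the compensated small-jump term I would show that the truncations $M_\eps(\cdot):=\int_0^{\cdot}\int_{\{\eps<\abs{z}_H\le1\}}z\,\widehat{\pi}(ds,dz)$ — which are square-integrable $H_\delta$-martingales — satisfy, by the Itô isometry for compensated Poisson integrals in $H_\delta$ together with Doob's maximal inequality, $\E\big[\sup_{t\in[0,T]}\abs{M_\eps(t)-M_{\eps'}(t)}_\delta^2\big]\le 4T\int_{\{\eps'<\abs{z}_H\le\eps\}}\abs{z}_\delta^2\,\nu(dz)$, which tends to $0$ as $\eps,\eps'\to0$ by dominated convergence (the dominating function $\mathbf 1_{\{\abs{z}_H\le1\}}\abs{z}_\delta^2$ being integrable by \eqref{delta-cond}); so $M_\eps$ converges uniformly on $[0,T]$ in $L^2(\Omega)$, hence $\mathbb{P}$-a.s.\ along a subsequence, to an $H_\delta$-valued càdlàg process coinciding with the small-jump part of \eqref{L_K}. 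The drift $mt$ poses no problem. Summing, $\{L(t)\}$ is $H_\delta$-valued with càdlàg paths, and stationarity and independence of increments, stochastic continuity in $H_\delta$ (from the $L^2(\Omega;H_\delta)$ bound on the small-jump martingale and stochastic continuity of the compound-Poisson large-jump part), and $L(0)=0$ carry over, so $\{L(t)\}$ is a Lévy process on $H_\delta$.

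For part (2), using part (1) and \eqref{delta-cond} — which now also allows compensating the large jumps, since $\int_{\{\abs{z}_\delta>1\}}\abs{z}_\delta\,\nu(dz)\le\int_H\abs{z}_\delta^2\,\nu(dz)<+\infty$ — I would write, on $H_\delta$, $L(t)=\widehat m\,t+M(t)$ with $\widehat m\in H_\delta$ and $M(t)=\int_0^t\int_{H_\delta}z\,\widehat{\pi}(ds,dz)$ a square-integrable $H_\delta$-valued martingale, $\E\abs{M(t)}_\delta^2=t\int_{H_\delta}\abs{z}_\delta^2\,\nu(dz)$, whence $L_A(t)=\int_0^t e^{(t-s)A}\widehat m\,ds+\int_0^t e^{(t-s)A}\,dM(s)=:I_1(t)+I_2(t)$. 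Fix $\gamma<1/2+\delta$; the case $\gamma\le\delta$ is trivial by \eqref{ContEmb}, so assume $\delta<\gamma<1/2+\delta$. By \eqref{analitic3}, $\abs{e^{rA}\widehat m}_\gamma\le C_{\delta,\gamma}\,r^{-(\gamma-\delta)}\abs{\widehat m}_\delta$ with $\gamma-\delta<1$, so $I_1\in C([0,T];H_\gamma)$ by the usual strong-continuity argument. For $I_2$, the Itô isometry for the stochastic convolution against $\widehat{\pi}$, applied in $H_\gamma$, gives $\E\abs{I_2(t)}_\gamma^2=\int_0^t\!\int_{H_\delta}\abs{e^{(t-s)A}z}_\gamma^2\,\nu(dz)\,ds\le C_{\delta,\gamma}^2\big(\int_{H_\delta}\abs{z}_\delta^2\,\nu(dz)\big)\int_0^t (t-s)^{-2(\gamma-\delta)}\,ds$, which is finite precisely when $2(\gamma-\delta)<1$, i.e.\ $\gamma<1/2+\delta$. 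Mean-square continuity \eqref{Mean-cont} then follows by the standard splitting: for $t>t_0$, $I_2(t)-I_2(t_0)=\int_0^{t_0}\big(e^{(t-s)A}-e^{(t_0-s)A}\big)dM(s)+\int_{t_0}^{t}e^{(t-s)A}\,dM(s)$, and the corresponding squared $H_\gamma$-expectations equal the deterministic integrals $\int_0^{t_0}\!\int_{H_\delta}\abs{(e^{(t-s)A}-e^{(t_0-s)A})z}_\gamma^2\,\nu(dz)\,ds$ and $\int_{t_0}^{t}\!\int_{H_\delta}\abs{e^{(t-s)A}z}_\gamma^2\,\nu(dz)\,ds$, both of which tend to $0$ as $t\to t_0$ by dominated convergence (dominating function $4C_{\delta,\gamma}^2(t_0-s)^{-2(\gamma-\delta)}\abs{z}_\delta^2$ on $[0,t_0]\times H_\delta$, integrable since $\gamma-\delta<1/2$), using strong continuity of $\{e^{tA}\}$ and $(t-s)^{-(\gamma-\delta)}\le(t_0-s)^{-(\gamma-\delta)}$ for $t\ge t_0$; the cases $t<t_0$ and $t_0=0$ are analogous. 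Adding back the continuous term $I_1$ gives \eqref{Mean-cont}.

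The step I expect to be the main obstacle is setting up the Itô isometry for the stochastic convolution $\int_0^t e^{(t-s)A}\,dM(s)$ in the \emph{better} space $H_\gamma$: one has to make sure that $M$ is genuinely a martingale in $H_\delta$ with the expected angle bracket (compensator $\nu(dz)\,ds$ read off in the $H_\delta$-norm), so that the integrand $r\mapsto e^{rA}$ — which is unbounded as an operator into $H_\gamma$ but has an integrable-order singularity, $\norm{e^{rA}}_{\mathcal{L}(H_\delta,H_\gamma)}\le C_{\delta,\gamma}\,r^{-(\gamma-\delta)}$ with exponent $<1/2$ — falls within the admissible class of integrands and the identity $\E\abs{\int_0^t\Phi(s)\,dM(s)}_\gamma^2=\int_0^t\!\int_{H_\delta}\abs{\Phi(s)z}_\gamma^2\,\nu(dz)\,ds$ is legitimate. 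This is exactly the point where one invokes the theory of stochastic integration with respect to compensated Poisson random measures in Hilbert space (as in \cite[Chapter 8]{Pes-Zab2007}); once that is in place, everything else reduces to the analytic-semigroup bound \eqref{analitic3} and dominated convergence.
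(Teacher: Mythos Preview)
Your proof is correct and follows essentially the same route as the paper's, only more explicitly: the paper simply invokes \cite[Proposition 2.6(i) and (iii)]{Pes-Zab2013}, checking for part (2), via \eqref{analitic3}, that $\int_0^t\int_H\abs{e^{sA}z}_\gamma^2\,\nu(dz)\,ds<\infty$ whenever $\gamma<1/2+\delta$, whereas you unpack the content of those cited results --- the $L^2(\Omega;H_\delta)$-convergence of the truncated small-jump martingales for (1), and the It\^o isometry plus dominated-convergence splitting for (2). One small point to watch is your remark that ``the drift $mt$ poses no problem'' in part (1): condition \eqref{delta-cond} concerns only $\nu$ and does not by itself force $m\in H_\delta$, so this has to be read as an implicit assumption (as it effectively is in the cited \cite{Pes-Zab2013} statement).
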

\begin{proof}
The statement in (1) is a direct consequence of \eqref{delta-cond} and \cite[Proposition 2.6(i)]{Pes-Zab2013} (with $H$ of \cite{Pes-Zab2013} which is our $H_\delta$). 
We also have 
\begin{gather} \label{ciao}L(t)= 
\int_0^t\int_{\{z\in H_{\delta}\; :\; \abs{z}_{\delta}> 1\}}z\pi(ds,dz)+\int_0^t\int_{\{z\in H_{\delta} \;:\; \abs{z}_{\delta}\leq 1\}}z\widehat{\pi}(ds,dz)
\end{gather}
We prove the statement in (2) only in the case $\delta<\gamma<1/2+\delta$, the case $\gamma\le \delta$ follows from \eqref{ContEmb}. We want apply \cite[Proposition 2.6(iii)]{Pes-Zab2013} (with $U=H_\delta$ and $H=H_{\gamma}$). First of all we note that
\[
\int_{\abs{z}_\delta>1}\abs{z}_\delta\nu(dz)<\int_{H_\delta}\abs{z}_\delta^2\nu(dz)<+\infty.
\]
 We know that
$a=\int_{\abs{z}_\delta >1}z \nu(dz) \in H_\delta$. Moreover, for every $t> 0$, by \eqref{analitic3} (with $\rho_2=\gamma$ and $\rho_1=\delta$)
\[
\int_0^t\abs{e^{sA}a}_\gamma ds\leq \abs{a}_\delta \int_0^t\frac{C}{s^{\gamma-\delta}}ds<+\infty,
\]
hence $\int_0^te^{sA}ads\in H_\gamma$. In the same way, we have
\[
\int_0^t\int_{H_\delta}\abs{e^{sA}z}^2_{\gamma}\nu(dz)ds\leq C_{\delta,\gamma}\int_{H_\delta}\abs{z}_\delta^2\nu(dz) \int_0^t\frac{C}{s^{2(\gamma-\delta)}}ds<+\infty,
\]
hence applying \cite[Proposition 2.6(iii)]{Pes-Zab2013} (with $U=H_\delta$ and $H=H_{\gamma}$) we obtain the statement of point 2.
\end{proof}

A first result in the same spirit of Proposition \ref{PropCarlo} is contained in \cite{Pes-Zab2013}, so below we are going to review such result in our context. Hence, in the next proposition, we revisit \cite[Theorem 3.1]{Pes-Zab2013} in our setting. We underline that we found some points in the proof of \cite[Theorem 3.1]{Pes-Zab2013} that are not completely clear to us. Consequently, we decided to prove it within our framework.  
Note that our result is weaker that the corresponding result stated in \cite{Pes-Zab2013}. Indeed in  Remark \ref{ErZabPes}, after the proof, we listed some unclear points about the proof in \cite{Pes-Zab2013}.
\\
We stress again that we are showing a regularity improvement for $\{L_A(t)\}_{t\geq 0}$, indeed, the process $\{L(t)\}_{t\geq 0}$ might not be càdlàg in the same space where $\{L_A(t)\}_{t\geq 0}$ is càdlàg; see for instance Example \ref{ExPZ}.

\begin{prop}\label{PZvariante}
Let $\delta\geq 0$ and $0\leq \eps\leq 1/4$. Assume that $\nu(H\backslash H_{\eps+\delta})=0$, $m =0$, and 
\begin{equation}\label{eps-int} 
\int_H\abs{z}_\delta^2+\abs{z}_{\eps+\delta}^4\nu(dz)
= \int_{H_{\epsilon + \delta}}\abs{z}_\delta^2+\abs{z}_{\eps+\delta}^4\nu(dz)
<+\infty.
\end{equation}
 Then, for every $\gamma<\eps+\delta$, the process $\{L_A(t)\}_{t\geq 0}$ is a càdlàg process in $H_{\gamma}$ and 
\[
\mathbb{E}\left[\sup_{t\in [0,T]}\abs{L_A(t)}_{\gamma}^q\right]<\infty,\qquad T>0,\; q\in [1,4).
\]
\end{prop}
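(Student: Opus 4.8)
The plan is to reduce the càdlàg regularity of $\{L_A(t)\}_{t\geq 0}$ in $H_\gamma$ to a combination of three ingredients: (i) the mean-square continuity of $\{L_A(t)\}_{t\geq 0}$ in $H_\gamma$ coming from Proposition~\ref{valoriHgamma}(2), valid here because $\gamma<\eps+\delta<1/2+\delta$ and the Lévy measure has finite second $H_\delta$-moment by \eqref{eps-int}; (ii) a factorization/stochastic-Fubini representation of $L_A(t)$ that exhibits the jump part explicitly; and (iii) a maximal inequality controlling $\E[\sup_{t\le T}|L_A(t)|_\gamma^q]$ for $q\in[1,4)$, which will simultaneously furnish the moment bound in the statement and, together with the mean-square continuity, upgrade the process to a version with càdlàg paths in $H_\gamma$.

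First I would split $\{L(t)\}$ via its Lévy--Itô decomposition \eqref{L_K}. Since $\nu(H\setminus H_{\eps+\delta})=0$ and $\int_H |z|_{\eps+\delta}^4\,\nu(dz)<\infty$, in particular $\int_{\{|z|_H\le 1\}}|z|_{\eps+\delta}^2\,\nu(dz)<\infty$ and $\int_{\{|z|_H>1\}}|z|_{\eps+\delta}^2\,\nu(dz)<\infty$, so the large-jump part is a compound-Poisson-type process in $H_{\eps+\delta}$ and contributes, after convolution with $e^{tA}$, a process with càdlàg paths in $H_{\eps+\delta}\subseteq H_\gamma$ (finitely many jumps on $[0,T]$, each smoothed by the analytic semigroup, which is strongly continuous on each $H_\rho$). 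The drift term $mt$ is harmless since $m\in H$ and $e^{tA}$ maps into every $H_\rho$ for $t>0$ with an integrable singularity, giving a continuous path in $H_\gamma$. The genuine work is the compensated small-jump part $L_A^{0}(t):=\int_0^t\!\int_{\{|z|_H\le 1\}} e^{(t-s)A}z\,\widehat\pi(ds,dz)$.

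For $L_A^{0}$ the main technical step — and the place I expect the real obstacle — is to establish the maximal estimate $\E[\sup_{t\le T}|L_A^{0}(t)|_\gamma^q]<\infty$ for $q\in[1,4)$. The natural route is the factorization method: write $L_A^{0}(t)=c_\alpha\int_0^t (t-s)^{\alpha-1}e^{(t-s)A}Y(s)\,ds$ with $Y(s)=\int_0^s\!\int_{\{|z|_H\le1\}}(s-r)^{-\alpha}e^{(s-r)A}z\,\widehat\pi(dr,dz)$ for a suitable $\alpha\in(0,1)$; the exponent $q<4$ and the fourth-moment hypothesis $\int_H|z|_{\eps+\delta}^4\,\nu(dz)<\infty$ in \eqref{eps-int} are exactly what is needed to bound $\E|Y(s)|_{\gamma'}^q$ uniformly via the Bichteler--Jacod / Kunita-type moment inequality for stochastic integrals against compensated Poisson random measures (using that $\E|\int\!\int g\,\widehat\pi|^q \lesssim \E(\int\!\int |g|^2\,\nu\,ds)^{q/2} + \E\int\!\int |g|^q\,\nu\,ds$, and the latter term is where the $L^4$-control enters when $q$ approaches $4$), combined with \eqref{analitic3} with $\rho_2=\gamma'$, $\rho_1=\eps+\delta$, the constraint $\gamma<\gamma'<\eps+\delta$ forcing $2(\gamma'-\eps-\delta)<1/2$ — here the bound $\eps\le 1/4$ is used so that the relevant time integrals $\int_0^t s^{-2\alpha}\,ds$ and $\int_0^t (t-s)^{(\alpha-1)\frac{q}{q-1}}\,ds$ both converge for an admissible $\alpha$. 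Then a deterministic estimate on the operator $u\mapsto \int_0^t(t-s)^{\alpha-1}e^{(t-s)A}u(s)\,ds$ as a bounded map $L^q(0,T;H_{\gamma'})\to C([0,T];H_\gamma)$ (again via \eqref{analitic3} and Hölder, this is the classical factorization lemma) yields the claimed supremum bound. Finally, having both $L^q$-boundedness of the supremum and mean-square (hence in-probability) continuity of $L_A^0$ from Proposition~\ref{valoriHgamma}(2), I would invoke the standard criterion that a process which is stochastically continuous and has a.s.\ bounded (indeed integrable supremum) trajectories, and arises as a stochastic convolution of a Lévy process, admits a càdlàg modification — this is precisely the content of \cite[Theorem 3.1]{Pes-Zab2013} revisited, and is the step the authors flag as needing care; concretely one approximates $L$ by its large-jump truncations $L^{(n)}$ whose convolutions are manifestly càdlàg in $H_\gamma$, shows $L_A^{(n)}\to L_A$ uniformly on $[0,T]$ in $L^q$ (using the maximal inequality applied to the tail), and concludes that the uniform-in-probability limit of càdlàg processes is càdlàg. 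Assembling the three pieces gives the proposition.
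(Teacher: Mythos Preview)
Your proposal takes a route genuinely different from the paper's, and it contains a real gap in the treatment of the compensated small-jump convolution $L_A^{0}$. The factorization method you invoke cannot work here, for a structural reason. For the factorization map $u\mapsto \int_0^t(t-s)^{\alpha-1}e^{(t-s)A}u(s)\,ds$ to send $L^q(0,T;H_{\gamma'})$ into $C([0,T];H_\gamma)$ you need $\alpha>1/q$. But the Bichteler--Jacod/Kunita bound for $\E|Y(s)|_{\gamma'}^q$ contains the term $\int_0^s\!\int |(s-r)^{-\alpha}e^{(s-r)A}z|_{\gamma'}^q\,\nu(dz)\,dr$; since $\gamma'<\eps+\delta$, the best available bound is $|e^{(s-r)A}z|_{\gamma'}\le C|z|_{\eps+\delta}$ (there is no positive power of $s-r$ to be gained when going \emph{down} in the scale, so your appeal to \eqref{analitic3} with $\rho_1=\eps+\delta$, $\rho_2=\gamma'$ is misapplied), and the resulting time integral $\int_0^s(s-r)^{-q\alpha}\,dr$ is finite only if $\alpha<1/q$. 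The two constraints $\alpha>1/q$ and $\alpha<1/q$ are incompatible for every $q\ge 2$. This is not an accident: the output of the factorization lemma is a \emph{continuous} function of $t$, whereas any c\`adl\`ag version of $L_A^{0}$ must inherit the jumps $\Delta L_A^{0}(\tau)=\Delta L(\tau)\in H_{\eps+\delta}\subset H_\gamma$ of the underlying L\'evy martingale, so a continuous modification in $H_\gamma$ cannot exist. Consequently your maximal estimate is unavailable, and the final approximation step (which needs that very maximal inequality applied to the tail $L_A-L_A^{(n)}$) collapses with it.

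The paper proceeds instead via a criterion designed for c\`adl\`ag (not continuous) paths: it checks the Gikhman--Skorokhod/Bezandry--Fernique two-sided increment condition
\[
\E\bigl[|L_A(t+h)-L_A(t)|_\gamma^{2}\,|L_A(t)-L_A(t-h)|_\gamma^{2}\bigr]\le R\,h^{1+\delta},
\]
by splitting the two increments into pieces $B_1,B_2,B_3$ and $C_1,C_2$ according to the time intervals $[0,t-h]$, $[t-h,t]$, $[t,t+h]$, and evaluating the resulting mixed moments with the explicit formula \cite[Proposition~2.4]{Pes-Zab2013}. The second-moment hypothesis $\int|z|_\delta^2\,\nu(dz)<\infty$ controls the products of second moments, while the fourth-moment hypothesis $\int|z|_{\eps+\delta}^4\,\nu(dz)<\infty$ is used exactly for the ``diagonal'' contributions $I_3$ and $J_3$ in which both integrands share the same time interval; the restriction $\eps\le 1/4$ is what makes the final exponent $1+4(\eps+\delta-\gamma)$ exceed $1$. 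Stochastic continuity (your ingredient (i)) is indeed used, but only to legitimise applying the test, not as a substitute for it.
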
 
\begin{proof}
Fix $\eps\in [0,1/4]$. We focus on the case $\gamma\in (\delta, \eps+\delta)$, the case $\gamma\leq\delta$ follows from \eqref{ContEmb}.

First of all we note that, by Proposition \ref{valoriHgamma} and \eqref{eps-int}, for every $t>0$ we have 
\[
L_A(t)=\int_0^t\int_H e^{(t-s)A}dL(t)= \int_0^te^{(t-s)A}ads+\int_0^t\int_He^{(t-s)A}z\widehat{\pi}(ds,dz),
\]
where 
\[
a=\int_{\{z\in H\,:\, \abs{z}_\delta>1\}}z\nu(dz)=\int_{\{z\in H_{\delta}\,:\, \abs{z}_\delta>1\}}z\nu(dz \in H_\delta. 
\]
Since $\{e^{tA}\}_{t\geq 0}$ is an analytic semigroup then $t\in [0,+\infty)\rightarrow \int_0^te^{(t-s)A}ads\in H_\gamma$ is a continuous map for every $\gamma<\eps+\delta$, so in the sequel we assume $a=0$ for simplicity.

To prove the statement of this proposition, we will exploit a criterion derived by the Gikhman--Skorokhod test \cite[Section III.4]{Gik-Sko1990} and by the Bezandry-Fernique test \cite{Bez-Fer1990}; see also \cite[Corollary 2.2]{Pes-Zab2013} (with $p=2$). We underline that the Gikhman–Skorokhod test \cite[Section III.4]{Gik-Sko1990} requires that the process $\{L_A(t)\}_{t\geq 0}$ is separable, whereas to apply the Bezandry-Fernique test \cite{Bez-Fer1990} one requires the process is stochastically continuous which is guaranteed by Proposition \ref{valoriHgamma}(2).

We proceed as in \cite{Pes-Zab2013}.  
By the tests mentioned above if there exist $\eta,R>0$ such that for any $0<h<t<T$ we have
\begin{align}\label{stimaF}
\E\left[\abs{L_A(t+h)-L_A(t)}^2_\gamma\abs{L_A(t)-L_A(t-h)}^2_\gamma\right]\leq Rh^{1+\eta},
\end{align}
then the process $\{L_A(t)\}_{t\in [0,T]}$ has a càdlàg modification in $H_\gamma$. We are going to check \eqref{stimaF}.
By standard calculations we obtain
\begin{align*}
\E\left[\abs{L_A(t+h)-L_A(t)}^2_\gamma\abs{L_A(t)-L_A(t-h)}^2_\gamma\right]&= \E\left[\abs{B_1+B_2+B_3}^2_\gamma\abs{C_1+C_2}^2_\gamma\right]
\end{align*}
where
\begin{align*}
B_1:&=\int_0^{t-h}\int_{H}(e^{(t+h-s)A}-e^{(t-s)A})z\widehat{\pi}(ds,dz),\\
B_2:&=\int_{t-h}^{t}\int_{H}(e^{(t+h-s)A}-e^{(t-s)A})z\widehat{\pi}(ds,dz),\\
B_3:&=\int_t^{t+h}\int_{H} e^{(t+h-s)A}z\widehat{\pi}(ds,dz),\\
C_1:&=\int_0^{t-h}\int_{H}  (e^{(t-s)A}-e^{(t-h-s)A})z\widehat{\pi}(ds,dz),\\
C_2:&=\int_{t-h}^{t}\int_{H} e^{(t-s)A}z\widehat{\pi}(ds,dz).
\end{align*}
By further calculations we get
\begin{align}\label{StimaFF}
\E\big[\abs{L_A(t+h)-L_A(t)}^2_\gamma &\abs{L_A(t)-L_A(t-h)}^2_\gamma \big]\leq 27\Bigg( \E\left[\abs{B_1}^2_\gamma\abs{C_1}^2_\gamma\right]+\E\left[\abs{B_2}^2_\gamma\abs{C_2}^2_\gamma\right]\notag\\
&+\E\left[\abs{B_1}^2_\gamma\right]\E\left[\abs{C_2}^2_\gamma\right]+\E\left[\abs{B_2}^2_\gamma\right]\E\left[\abs{C_1}^2_\gamma\right]\notag\\
&+\E\left[\abs{B_3}^2_\gamma\right]\E\left[\abs{C_1}^2_\gamma\right]+\E\left[\abs{B_3}^2_\gamma\right]\E\left[\abs{C_2}^2_\gamma\right]\Bigg).
\end{align}
We make an observation that is fundamental to the next estimates. By standard properties of the analytic semigroups and by \eqref{analitic3} (with $\rho_1=0$ and $\rho_2=1-\kappa$) for every $\kappa>0$ there exists $C_{\kappa}>0$ such that for every $h>0$ and $u\in H_{\gamma+\kappa}$ we have
\begin{align}
\abs{(e^{hA}-\Id)u}_\gamma&\leq \int^h_0\abs{Ae^{sA}u}_\gamma ds= \int^h_0\abs{(-A)^{1+\gamma}e^{sA}u}_{H} ds\notag\\
&=\int^h_0\abs{(-A)^{1-\kappa}e^{sA}(-A)^{\gamma+\kappa}u}_{H} ds\leq C_{\kappa}h^{\kappa}\abs{u}_{\kappa+\gamma}\label{E1}.
\end{align}

We begin to estimate $\E[\abs{B_1}^2_\gamma]$. Let $\kappa>0$. By \cite[Theorem 8.23]{Pes-Zab2007}, \eqref{analitic3} and \eqref{E1} we have
\begin{align*}
\E\left[\abs{B_1}^2_\gamma\right]&=\int_0^{t-h}\int_{H}\abs{e^{(t+h-s)A}-e^{(t-s)A}z}^2_\gamma\nu(dz)ds\\
&=\int_0^{t-h}\int_{H}\abs{(e^{hA}-\Id)e^{(t-s)A}z}^2_\gamma\nu(dz)ds\\
&\leq C^2_{\kappa}h^{2\kappa}\int_0^{t-h}\int_{H}\abs{e^{(t-s)A}z}^2_{\kappa+\gamma}\nu(dz)ds\\
&\leq C^2_{\kappa}C^2_{\delta,\kappa+\gamma}h^{2\kappa}\int_0^{t-h}\frac{1}{(t-s)^{2\gamma+2\kappa-2\delta}}ds\int_{H}\abs{z}^2_\delta\nu(dz),\\
&\leq C^2_{\kappa}C^2_{\delta,\kappa+\gamma}h^{2\kappa}\frac{1}{1+2\delta-2\kappa-2\gamma}\left(t^{1+2\delta-2\gamma-2\kappa}-h^{1+2\delta-2\gamma-2\kappa}\right)\int_{H}\abs{z}^2_{\delta}\nu(dz)\\
&\leq   C^2_{\kappa}C^2_{\delta,\kappa+\gamma}h^{2\kappa}\frac{1}{1+2\delta-2\gamma-2\kappa}T^{1+2\delta-2\kappa-2\gamma}\int_{H}\abs{z}^2_{\delta}\nu(dz).
\end{align*}
Hence there exists $K_{\delta,\kappa,\gamma,T}>0$ such that for any $0\leq h\leq t\leq T$ we have 
\begin{align}\label{stima1}
\E\left[\abs{B_1}^2_\gamma\right]\leq K_{\delta,\kappa,\gamma,T}h^{2\kappa}\int_{H}\abs{z}^2_{\delta}\nu(dz),
\end{align}
where $\kappa>0$.

We estimate $\E[\abs{B_2}^2_\gamma]$. Let $\theta>0$ such that $\theta+\gamma-\delta<1/2$. By the same arguments used for $\E[\abs{B_1}^2_\gamma]$ (with $\kappa$ replaced by $\theta$) for any $0\leq h\leq t\leq T$ we obtain 
\begin{align*}
\E\left[\abs{B_2}_\gamma^2 \right]&\leq C^2_{\theta}C^2_{\delta,\theta+\gamma}h^{2\theta}\int_{t-h}^{t}\frac{1}{(t-s)^{2\gamma+2\theta-2\delta}}ds\int_{H}\abs{z}^2_{\delta}\nu(dz).
\end{align*}
Since $\theta+\gamma-\delta<1/2$ and $\theta>0$ is arbitrary small, there exists $K_{\gamma,\delta}>0$ such that for any $0\leq h\leq t\leq T$ we have 
\begin{align}\label{stima2}
\E\left[\abs{B_2}^2_\gamma\right]\leq K_{\gamma,\delta}h^{1+2\delta-2\gamma}\int_{H}\abs{z}^2_{\delta}\nu(dz).
\end{align}
We estimate $\E[\abs{B_3}_\gamma^2]$. Since $\gamma-\delta<1/2$, by \cite[Theorem 8.23]{Pes-Zab2007} and \eqref{analitic3} we have
\begin{align*}
\E\left[\abs{B_3}^2_\gamma\right]&=\int_t^{t+h}\int_{H}\abs{e^{(t+h-s)A}z}^2_\gamma\nu(dz)ds\\
&\leq C^2_{\delta,\gamma}\int_t^{t+h}\frac{1}{(t+h-s)^{2\gamma-2\delta}}ds\int_{H}\abs{z}^2_\delta\nu(dz)ds\\
&= C^2_{\delta,\gamma}\frac{1}{1+2\delta-2\gamma}h^{1+2\delta-2\gamma}\int_{H}\abs{z}^2_\delta\nu(dz).
\end{align*}
Hence there exists $K'_{\gamma,\delta}>0$ such that for any $0\leq h\leq t\leq T$ we have 
\begin{align}\label{stima3}
\E[\abs{B_3}^2_\gamma]\leq K'_{\gamma,\delta}h^{1+2\delta-2\gamma}\int_{H}\abs{z}^2_\delta\nu(dz)ds.
\end{align}

We estimate $\E[\abs{C_1}^2_\gamma]$. Let $\sigma>0$ such that $\sigma+\gamma-\delta<1/2$. By the same arguments used for $\E[\abs{B_1}^2_\gamma]$ for any $0\leq h\leq t\leq T$ we have 
\begin{align*}
\E\left[\abs{C_1}_\gamma^2 \right]&=\int_0^{t-h}\int_{H}\abs{e^{(t-s)A}-e^{(t-h-s)A}z}^2_\gamma\nu(dz)ds\\
&=\int_0^{t-h}\int_{H}\abs{(e^{hA}-\Id)e^{(t-s-h)A}z}^2_\gamma\nu(dz)ds\\
&\leq C^2_{\sigma}h^{2\sigma}\int_0^{t-h}\int_{H}\abs{e^{(t-s-h)A}z}^2_{\sigma+\gamma}\nu(dz)ds\\
&\leq C^2_{\sigma}C^2_{\delta,\sigma+\gamma}h^{2\kappa}\int_0^{t-h}\frac{1}{(t-s-h)^{2\gamma+2\sigma-2\delta}}ds\int_{H}\abs{z}^2_{\delta}\nu(dz)\\
&\leq  C^2_{\sigma}C^2_{\delta,\sigma+\gamma}h^{2\sigma}\frac{1}{1-2\sigma-2\gamma+2\delta}(t-h)^{1+2\delta-2\gamma-2\sigma}\int_{H}\abs{z}^2_{\delta}\nu(dz)\\
&\leq  C^2_{\sigma}C^2_{\delta,\sigma+\gamma}h^{2\sigma}\frac{1}{1-2\sigma-2\gamma+2\delta}T^{1+2\delta-2\gamma-2\sigma}\int_{H}\abs{z}^2_{\delta}\nu(dz).
\end{align*} 
Hence there exists a constant $K_{\sigma,\gamma,\delta,T}>0$ such that
\begin{align}\label{stima4}
\E\left[\abs{C_1}^2_\gamma\right]\leq K_{\sigma,\gamma,\delta,T}h^{2\sigma}\int_{H}\abs{z}^2_{\delta}\nu(dz),
\end{align}
where $\sigma>0$ and $\sigma+\gamma-\delta<1/2$. 

We estimate $\E[\abs{C_2}^2_\gamma]$. By the same arguments used for $\E[\abs{B_3}^2_\gamma]$ for any $0\leq h\leq t\leq T$ we have 
\begin{align*}
\E\left[\abs{C_2}^2_\gamma\right]&=\int_{t-h}^{t}\int_{H}\abs{e^{(t-s)A}z}^2_\gamma\nu(dz)ds\leq C^2_{\delta,\gamma}\int_{t-h}^{t}\frac{1}{(t-s)^{2\gamma-2\delta}}ds\int_{H}\abs{z}^2_\delta\nu(dz)ds\\
&= C^2_{\delta,\gamma}\frac{1}{1+2\delta-2\gamma}h^{1-2\gamma+2\delta}\int_{H}\abs{z}^2_\delta\nu(dz),
\end{align*}
hence we obtain
\begin{align}\label{stima5}
\E\left[\abs{C_2}_\gamma\right]\leq K'_{\gamma,\delta}h^{1+2\delta-2\gamma}\int_{H}\abs{z}^2_\delta\nu(dz),
\end{align}
where $K'_{\gamma,\delta}$ is the same constant of \eqref{stima3}.

Now we have to estimate $\E[\abs{B_1}^2_\gamma\abs{C_1}^2_\gamma]+\E[\abs{B_2}^2_\gamma\abs{C_2}^2_\gamma]$. We start with $\E[\abs{B_1}^2_\gamma\abs{C_1}^2_\gamma]$. By \cite[Proposition 2.4]{Pes-Zab2013} we have
\begin{align*}
\E\left[\abs{B_1}^2_\gamma\abs{C_1}^2_\gamma\right]=I_1+I_2+I_3
\end{align*}
where
{\small
\begin{align*}
&I_1:=\left(\int^{t-h}_0\int_H\abs{\left(e^{(t+h-s)A}-e^{(t-s)A}\right)z}^2_\gamma ds\nu(dz)\right)\left(\int^{t-h}_0\int_H\abs{\left(e^{(t-s)A}-e^{(t-h-s)A}\right)z}^2_\gamma ds\nu(dz)\right);\\
&I_2:=2\left[\int^{t-h}_0\int_H \scal{\left(e^{(t+h-s)A}-e^{(t-s)A}\right)z}{\left(e^{(t-s)A}-e^{(t-h-s)A}\right)z}_\gamma ds\nu(dz)\right]^2;\\
&I_3:=\int^{t-h}_0\int_H\abs{\left(e^{(t+h-s)A}-e^{(t-s)A}\right)z}^2_\gamma \abs{\left(e^{(t-s)A}-e^{(t-h-s)A}\right)z}^2_\gamma ds\nu(dz).
\end{align*} 
}
We estimate $I_1$. In the same way of \eqref{stima1} and \eqref{stima4} we have
\begin{align}\label{StimaI1}
I_1\leq K_{\kappa,\gamma,\delta,T}K_{\sigma,\gamma,\delta,T}h^{2\kappa+2\sigma}\left(\int_{H}\abs{z}^2_{\delta}\nu(dz)\right)^2,
\end{align}
where $\sigma,\kappa>0$, $\sigma+\gamma-\delta<1/2$, $K_{\kappa,\gamma,\delta,T}$ and $K_{\sigma,\gamma,\delta,T}$ are the same constant in \eqref{stima1} and \eqref{stima4}.

We estimate $I_2$. Let $\sigma,\kappa>0$ be as in \eqref{StimaI1}. By \eqref{analitic3} and \eqref{E1} we have
\begin{align*}
I_2&\leq 2\left[\int^{t-h}_0\int_H \abs{\left(e^{hA}-\Id\right)e^{(t-s)A}z}_\gamma\abs{\left(e^{hA}-\Id\right)e^{(t-h-s)A}z}_\gamma ds\nu(dz)\right]^2\\
&\leq  C^2 h^{2\sigma+2\kappa}\left(\int^{t-h}_0\frac{1}{(t-s)^{\kappa+\gamma-\delta}}\frac{1}{(t-h-s)^{\sigma+\gamma-\delta}}ds\right)^2\left(\int_H\abs{z}_\delta^2 dz\right)^2
\end{align*}
where $C:=2C_\sigma C_\kappa C_{\delta,\sigma+\gamma}C_{\delta,\kappa+\gamma}$. Since $\sigma+\gamma-\delta<1/2$ by the H\"older inequality we obtain
\begin{align*}
I_2&\leq C^2h^{2\sigma+2\kappa}\abs{\int^{t-h}_0\frac{1}{(t-s)^{2(\kappa+\gamma-\delta)}}\int^{t-h}_0\frac{1}{(t-h-s)^{2(\sigma+\gamma-\delta)}}ds}\left(\int_H\abs{z}_\delta^2 dz\right)^2 \\
&\leq \frac{T^{2+4\delta-4\gamma-2\theta-2\kappa}}{(1+2\delta-2\gamma-2\kappa)(1+2\delta-2\gamma-2\theta)}C^2h^{2\sigma+2\kappa}\left(\int_H\abs{z}_\delta^2 dz\right)^2 
\end{align*}
hence there exists $K_{\kappa,\sigma,\gamma,\delta,T}>0$ such that
\begin{align}\label{StimaI2}
I_2\leq K_{\kappa,\sigma,\gamma,\delta,T} h^{2\kappa+2\sigma}\left(\int_H\abs{z}_\delta^2 dz\right),
\end{align}
where $\sigma,\kappa>0$ and $\sigma+\gamma-\delta<1/2$.

We estimate $I_3$. Let $\sigma,\kappa>0$ be as in \eqref{StimaI1}. By \eqref{analitic3} and \eqref{E1} we have 
 \begin{align*}
I_3\leq C^2h^{2\sigma+2\kappa}\int^{t-h}_0\frac{1}{(t-s)^{2(\kappa+\gamma-\delta)}}\frac{1}{(t-s-h)^{2(\sigma+\gamma-\delta)}}ds\left(\int_H\abs{z}_{\delta}^4 dz\right),
\end{align*}
where $C:=2C_\sigma C_\kappa C_{\delta+\eps,\sigma+\gamma}C_{\delta+\eps,\kappa+\gamma}$. Since $\sigma+\gamma-\delta<1/2$ by the H\"older inequality with $p,q>1$ such that $p^{-1}+q^{-1}=1$ and $p<1/2(\sigma+\gamma-\delta)$ we have
{\small
\begin{align*}
I_3&\leq  C^2 h^{2\sigma+2\kappa}\left(\int^{t-h}_0\frac{1}{(t-s)^{2(\kappa+\gamma-\delta)q}}\right)^{2/q}\left(\int^{t-h}_0\frac{1}{(t-h-s)^{2(\sigma+\gamma-\delta)p}}ds\right)^{2/p}\left(\int_H\abs{z}_{\delta}^4 dz\right)\\
&\leq C^2 \frac{T^{2+4\delta-4\gamma-2\sigma-2\kappa}}{(1+2\delta-2\gamma-2\kappa)(1+2\delta-2\gamma-2\sigma)}h^{2\sigma+2\kappa}\left(\int_H\abs{z}_{\delta}^4 dz\right),
\end{align*}
}
hence there exists $K_{\kappa,\sigma,\gamma,\delta,T}$ such that
\begin{align}\label{StimaI3}
I_3\leq  K_{\kappa,\sigma,\gamma,\delta,T} h^{2\kappa+2\sigma}\left(\int_H\abs{z}_{\delta}^4 dz\right),
\end{align}
where $\sigma,\kappa>0$, $\sigma+\gamma-\delta<1/2$.

We estimate $\E[\abs{B_2}^2_\gamma\abs{C_2}^2_\gamma]$. By \cite[Proposition 2.4]{Pes-Zab2013} we have 
\begin{align*}
\E\left[\abs{B_2}^2_\gamma\abs{C_2}^2_\gamma\right]=J_1+J_2+J_3
\end{align*}
where
\begin{align*}
&J_1:=\int^{t}_{t-h}\int_H\abs{\left(e^{(t+h-s)A}-e^{(t-s)A}\right)z}^2_\gamma ds\nu(dz)\int^{t}_{t-h}\int_H\abs{e^{(t-s)A}z}^2_\gamma ds\nu(dz);\\
&J_2:=2\left[\int^{t}_{t-h}\int_H \scal{\left(e^{(t+h-s)A}-e^{(t-s)A}\right)z}{e^{(t-s)A}z}_\gamma ds\nu(dz)\right]^2;\\
&J_3:=\int^{t}_{t-h}\int_H\abs{\left(e^{(t+h-s)A}-e^{(t-s)A}\right)z}^2_\gamma \abs{e^{(t-s)A}z}^2_\gamma ds\nu(dz).
\end{align*}
We estimate $J_1$. In the same way of \eqref{stima2} and \eqref{stima5} we obtain
\begin{align}\label{StimaJ1}
J_1\leq K_{\theta,\gamma}K_{\gamma}h^{2+4\delta-4\gamma}\left(\int_{H}\abs{z}^2_\delta\nu(dz)\right)^2,
\end{align}
where $K_{\theta,\gamma,\delta}$ and $K_{\gamma,\delta}$ are the same constants in \eqref{stima2} and \eqref{stima5}, respectively.

 We estimate $J_2$. Let $\zeta>0$ be such that $2\gamma+\zeta<1$. By \eqref{analitic3} and \eqref{E1}, we have
\begin{align*}
J_2\leq C^2_\zeta C^2_{\delta,\gamma+\zeta} C_{\delta,\gamma}^2h^{2\zeta}\left(\int_{t-h}^t\frac{1}{(t-s)^{2\gamma+\zeta-2\delta}}ds\right)^2\left(\int_{H}\abs{z}^2_\delta\nu(dz)\right)^2,
\end{align*}
since $\zeta>0$ is arbitrary small, there exists $K''_{\gamma,\delta}>0$ such that
\begin{align}\label{StimaJ2}
J_2\leq K''_{\gamma,\delta}h^{2+4\delta-4\gamma}\left(\int_{H}\abs{z}^2_{\delta}\nu(dz)\right)^2. 
\end{align} 
Finally we estimate $J_3$. Let $\zeta>0$ such that $4\gamma+2\zeta-4\eps-4\delta<1$. By \eqref{analitic3} and \eqref{E1}, we have
\begin{align*}
J_3\leq C^2_\theta C^2_{\eps+\delta,\gamma+\zeta} C_{\eps+\delta,\zeta}^2h^{2\zeta}\int_{t-h}^t\frac{1}{(t-s)^{4\gamma+2\zeta-4\eps-4\delta}}ds\left(\int_{H}\abs{z}^4_{\eps+\delta}\nu(dz)\right),
\end{align*}
since $\zeta>0$ is arbitrary small, there exists $K_{\eps,\gamma,\delta,T}>0$ such that 
\begin{align}\label{StimaJ3}
J_3\leq K_{\eps,\gamma,T}h^{1+4\eps+4\delta-4\gamma}\left(\int_{H}\abs{z}^4_{\delta+\eps}\nu(dz)\right).
\end{align} 
By \eqref{eps-int}, \eqref{StimaFF}, \eqref{stima1}, \eqref{stima2}, \eqref{stima3}, \eqref{stima4}, \eqref{stima5}, \eqref{StimaI1}, \eqref{StimaI2}, \eqref{StimaI3}, \eqref{StimaJ1}, \eqref{StimaJ2} and \eqref{StimaJ3} there exists a constant $K>0$ independent of $h$ such that
\begin{align*}
\E[\abs{L_A(t+h)-L_A(t)}^2_\gamma\abs{L_A(t)-L_A(t-h)}^2_\gamma]&\leq K\bigg(h^{1+2\kappa-2\gamma+2\delta}+h^{1+2\sigma-2\gamma+2\delta}\\
&+h^{2\sigma+2\kappa}+h^{2-4\gamma+4\delta}+h^{1+4\eps+4\delta-4\gamma}\bigg),
\end{align*}
where $\delta>0$, $0<\eps\leq 1/4$, $0\leq \gamma<\eps+\delta$, $\sigma,\kappa>0$ and $\sigma+\gamma-\delta<1/2$. Since $\gamma-\delta<1/4$ then we can choose $\sigma>\gamma-\delta$ and $\kappa>\max\{1/2-\sigma,\gamma-\delta\}$ such that \eqref{stimaF} holds true.\end{proof}

\begin{rmk}\label{ErZabPes}
In this remark, we list the steps in \cite[Proof of Theorem 3.1]{Pes-Zab2013} which are not completely clear to us. We underline that the coefficient $\delta$ in this paper correspond to the coefficient $\rho$ in  \cite[Proof of Theorem 3.1]{Pes-Zab2013}.
\begin{enumerate}
\item On page $26$, in formula (27), the exponent $1-2\rho$ is not greater than $1$.
\item On page $28$, in the estimate for $I_2$, the exponent $2-4\rho$ is greater than $1$ if and only if $\rho<1/4$. However, in the statement of \cite[Theorem 3.1]{Pes-Zab2013}, it is only assumed that $\rho<1/2$.
\end{enumerate}
We believe that point (1) is just a slight inaccuracy. Indeed, in the proof presented in this paper, this issue does not appear. However, we were not able to fix point (2) In fact, in our statement, if $\delta=0$ then $\gamma$ is smaller than $1/4$.
\end{rmk}

The following proposition is an immediate consequence of classical Sobolev embedding and Proposition \ref{PZvariante}.

\begin{prop}\label{PropCarlo-Levy}
Let $d\in\N$, $H=L^2([0,1]^d)$ and $A$ be the realization of the Laplacian operator with Dirichlet boundary conditions. Assume that $\nu(H\backslash H_{1/4+\delta})=0$ and
\begin{equation}\label{condMo}
\int_H \abs{z}_\delta^2+\abs{z}^4_{1/4+\delta}\nu(dz)<+\infty.
\end{equation}
The following two assertions hold true.
\begin{enumerate}
\item $\{L_A(t)\}_{t\geq 0}$ is a càdlàg process in $H_\gamma$ with $\gamma<1/4+\delta$.
\item Let $\delta>\frac{d-1}{4}$. Then $\{L_A(t)\}_{t\geq 0}$ is a càdlàg process in $C([0,1]^d)$ and
\[
\E\left[\sup_{t\in [0,T]}\norm{L_A(t)}^q_{C([0,1]^d)}\right]<+\infty,\qquad T>0,\, q\in [1,4).
\]
\end{enumerate}

\end{prop}

\begin{rmk}\label{BvsL}
Propositions \ref{PropCarlo}(1) and \ref{PropCarlo-Levy}(1) (with $\delta=0$) they might seem similar. However in Proposition \ref{PropCarlo} $\{W(t)\}_{t\geq 0}$ is a $H$-cylindrical Wiener process, namely a Wiener process in $H_\eta$ for every $\eta<-1/4$ instead in Proposition \ref{PropCarlo-Levy} $\{L(t)\}_{t\geq 0}$ is a Lévy process in  $H$. On the other hand, both Propositions \ref{PropCarlo} and \ref{PropCarlo-Levy} show an improvement in the regularity of the trajectories of the stochastic convolution processes. 

\end{rmk}


The next example shows that condition \eqref{eps-int} does not imply that $\{L(t)\}_{t\geq 0}$ takes value in $H_{\delta+\eps}$. This fact highlights regularization properties of stochastic convolution, similar to those observed in the cylindrical Wiener  case.

\begin{ex}[Example 3.4 of \cite{Pes-Zab2013}]\label{ExPZ}
In this example we revisit in our framework \cite[Example 3.4]{Pes-Zab2013}. Let $H=l^2$ and let $\{e_n\}_{n\in\N}$ be an orthonormal basis of $H$. Let $A$ be a linear diagonal  operator defined by
\[
Ae_n=-n^2e_n,\qquad n\in\N, 
\]
hence, for every $\rho\geq 0$, we have
\[
H_\rho:={\rm Dom}((-A)^\rho)=\left\lbrace x=\sum_{n\in\N}x_ne_n\,:\, \sum_{n\in\N}x_n^2n^{2\rho}<+\infty\right\rbrace.
\]
 Let $\delta \ge 0$ and let $\delta_1$ be the Dirac measure concentrated in $1$.
We consider the Lévy measure
\[
\nu=\sum_{n\in\N}n^{-k}\delta_1
\]
for some $k \ge 0.$ 
In particular $\nu$ is the Lévy measure associated with a cylindrical Lévy process $\{L(t)\}_{t\geq 0}$ defined by
\[
L(t)=\sum_{n\in\N}n^{-k}l_n(t)e_n,
\]
where $\{l_n\}_{n\in\N}$ are one dimensional independent Poisson processes with intensity $1$. In this example \eqref{eps-int} reads as
\begin{equation}\label{eps-diag}
\sum_{n\in\N} n^{2(\delta-k)}+n^{4(\eps+\delta-k)}<+\infty,
\end{equation}
for some $0<\eps\leq 1/4$. If $k>1/2+\delta$ then \eqref{eps-diag} hold true with $\eps=1/4$ and so by Proposition \ref{PZvariante} the process $\{L_A(t)\}_{t\geq 0}$ is càdlàg in $H_\gamma$ for every $\gamma<1/4+\delta$. However by \cite[Proposition 2.6(i)]{Pes-Zab2013} $\{L(t)\}_{t\geq 0}$ is a Lévy process in $H_\gamma$ if and only if
\[
\sum_{n\in\N}\min\{n^{2(\gamma-k)},1\}<+\infty,
\]
namely $k>1/2+\gamma$. So, in this case, if we choose $\gamma\in (\delta,\delta+1/4)$ and $k\in (1/2+\delta,1/2+\gamma)$ then $\{L(t)\}_{t\geq 0}$ is not a Lévy process in $H_\gamma$ but  $\{L_A(t)\}_{t\geq 0}$ is càdlàg in $H_\gamma$.
\end{ex}

We conclude this section with a discussion about $\alpha$-stable process with $\alpha\in (0,2)$. Assume that there exists an orthonormal basis $\{e_n\}_{n\in\N}$ of $H$ consisting of eigenvectors of $A$ and let $\{\lambda_n\}_{n\in\N}$ be the corresponding eigenvalues. Let $\{\sigma_n\}_{n\in\N}$ be a sequence of positive number. We consider the Lévy process $\{L(t)\}_{t\geq 0}$ defined by
\begin{equation}\label{Alphastabile}
L(t):=\sum_{n\in\N}l_n(t)e_n,\qquad t>0,
\end{equation}
where $l_1,l_2,\ldots$ are real independent $\alpha$-stable process with $\alpha\in (0,2)$ having associated Levy measure $\nu_n$ given by
\[
\nu_n(d\xi)=C\frac{\sigma_n}{|\xi|^{\alpha+1}}d\xi,
\]
for some constant $C>0$, see also \cite{Pri-Zab2011}. It is not  possible to apply Proposition \ref{PZvariante} in the case where $\{L(t)\}_{t\geq 0}$ is defined by \eqref{Alphastabile}, and, even using a localisation method as in \cite[Corollary 3.2]{Pes-Zab2013}, it is not possible prove a regularity improvement for the stochastic convolution $\{L_A(t)\}_{t\geq 0}$, see \cite[Remark 3.5]{Pes-Zab2013} for further details. Indeed we have the following result in this case.

\begin{prop}[Proposition 2.3 of \cite{Liu-Zha2012}]\label{ProLiu}
Let $\{L(t)\}_{t\geq 0}$ be defined by \eqref{Alphastabile} with $\alpha\in (0,2)$ and let $\gamma\geq 0$. Then the following assertions are equivalent.
\begin{enumerate}
\item $\{L(t)\}_{t\geq 0}$ is a Lévy process on $H_\gamma$.
\vspace{2pt}
\item $\sum_{n\in\N} \abs{\sigma_n\lambda_n^\gamma}^{\alpha}<+\infty$.
\vspace{2pt}
\item $\{L_A(t)\}_{t\geq 0}$ is càdlàg in $H_\gamma$.
\end{enumerate}

\end{prop}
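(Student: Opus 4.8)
The plan is to prove the equivalences along the route $(1)\Leftrightarrow(2)$, $(2)\Rightarrow(3)$, $(3)\Rightarrow(2)$, working throughout in coordinates. Since $A$ and $\{e^{tA}\}_{t\geq 0}$ are diagonal in $\{e_n\}_{n\in\N}$, one has $L(t)=\sum_{n\in\N}l_n(t)e_n$ and $L_A(t)=\sum_{n\in\N}Y_n(t)e_n$ with $Y_n(t)=\int_0^t e^{-(t-s)\abs{\lambda_n}}\,dl_n(s)$, and $\norm{e_n}_\delta=\abs{\lambda_n}^\delta$; moreover $\{\abs{\lambda_n}^{-\delta}e_n\}_{n\in\N}$ is an orthonormal basis of $H_\delta$, so $H_\delta$-convergence of these series is governed by the squared coefficients. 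A one-dimensional integration by parts (legitimate since $l_n$ is càdlàg and $s\mapsto e^{-(t-s)\abs{\lambda_n}}$ is $C^1$) gives
\[
Y_n(t)=l_n(t)-V_n(t),\qquad V_n(t):=\abs{\lambda_n}\int_0^t e^{-(t-s)\abs{\lambda_n}}l_n(s)\,ds,
\]
and each $V_n$, being an ordinary integral of a locally bounded function, is a continuous function of $t$; in particular $\Delta Y_n(\tau)=\Delta l_n(\tau)$ at every jump time $\tau$, a fact reused in Step 3.

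\emph{Step 1 ($(1)\Leftrightarrow(2)$).} For fixed $t>0$ the random series $\sum_n l_n(t)e_n$ converges in $H_\delta$ if and only if $\sum_n l_n(t)^2\abs{\lambda_n}^{2\delta}<+\infty$ $\mathbb{P}$-a.s.; the summands being independent and nonnegative, by the Kolmogorov three-series theorem this is equivalent to $\sum_n\E\big[(\abs{\lambda_n}^\delta\abs{l_n(t)})^2\wedge 1\big]<+\infty$. Since $l_n(t)$ is $\alpha$-stable with Lévy measure a multiple of $\nu_n$, it has the polynomial tail $\mathbb{P}(\abs{l_n(t)}>u)\asymp\sigma_n u^{-\alpha}$ for $u$ large, and a direct computation of the truncated second moment (using $\alpha<2$) shows that $\E[(\abs{\lambda_n}^\delta\abs{l_n(t)})^2\wedge1]$ is, for large $n$, comparable to the general term of the series in $(2)$; hence fixed-time membership in $H_\delta$ is equivalent to $(2)$. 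Under $(2)$, applying the three-series criterion once more to $\sum_n\abs{\lambda_n}^{2\delta}\sup_{t\leq T}l_n(t)^2$ — using that the tail of $\sup_{t\leq T}\abs{l_n(t)}$ is comparable to that of $\abs{l_n(T)}$ (an Ottaviani-type maximal inequality for Lévy processes) — yields a.s. uniform convergence of the partial sums on $[0,T]$, so the limit is càdlàg in $H_\delta$; stationarity and independence of increments descend from the coordinates, and stochastic continuity follows from the $1/\alpha$-self-similarity $L(h)\overset{d}{=}h^{1/\alpha}L(1)$ together with $\norm{L(1)}_\delta<+\infty$ a.s. This proves $(2)\Rightarrow(1)$. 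Conversely, if $(2)$ fails, divergence of $\sum_n\E[(\abs{\lambda_n}^\delta\abs{l_n(t)})^2\wedge1]$ forces $\sum_n l_n(t)^2\abs{\lambda_n}^{2\delta}=+\infty$ a.s. by independence, so $L(t)\notin H_\delta$ a.s. and $L$ has no $H_\delta$-valued modification; thus $(1)$ fails.

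\emph{Step 2 ($(2)\Rightarrow(3)$).} Summing the identity above, $L_A(t)=L(t)-V(t)$ with $V(t)=\sum_n V_n(t)e_n$. From $\abs{V_n(t)}\leq(1-e^{-t\abs{\lambda_n}})\sup_{s\leq T}\abs{l_n(s)}\leq\sup_{s\leq T}\abs{l_n(s)}$ one gets $\sup_{t\leq T}\norm{V_n(t)e_n}_\delta\leq\abs{\lambda_n}^\delta\sup_{s\leq T}\abs{l_n(s)}$, so the exact same three-series estimate as in Step 1 (again with the running-supremum tail comparison) shows that under $(2)$ the series $\sum_n V_n(t)e_n$ converges in $H_\delta$ uniformly on $[0,T]$ $\mathbb{P}$-a.s.; hence $V$ is a.s. a continuous $H_\delta$-valued process. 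Since $L$ is càdlàg in $H_\delta$ by Step 1, so is $L_A=L-V$, which is $(3)$.

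\emph{Step 3 ($(3)\Rightarrow(2)$), and the main obstacle.} I would argue by contraposition: suppose $(2)$ fails. The decisive observation — and the reason no regularization can occur here, in sharp contrast with Propositions \ref{valoriHgamma} and \ref{PZvariante} — is that $L_A$ and $L$ have the same jumps: by the coordinatewise identity, $\Delta L_A(\tau)=\Delta L(\tau)$ at every jump time. A.s. the jump times of the independent processes $l_1,l_2,\dots$ are pairwise distinct, so every jump of $L$ on $[0,T]$ equals $\Delta l_n(\tau)e_n$ for a single $n$; were $L_A$ to admit a càdlàg $H_\delta$-valued modification, the injection $H_\delta\hookrightarrow H$ would force $\norm{\Delta L_A(\tau)}_\delta=\abs{\Delta l_n(\tau)}\abs{\lambda_n}^\delta$ and, by the triangle inequality at each such $\tau$,
\[
\sup_{t\in[0,T]}\norm{L_A(t)}_\delta\ \geq\ \tfrac12\sup_{n\in\N}\big(\abs{\lambda_n}^\delta M_n\big),\qquad M_n:=\sup_{\tau\leq T}\abs{\Delta l_n(\tau)}.
\]
The $M_n$ are independent with $\mathbb{P}(M_n>u)=1-\exp(-T\nu_n(\{\abs{\xi}>u\}))\asymp\min(\sigma_n u^{-\alpha},1)$ for $u$ large, so $\sum_n\mathbb{P}(\abs{\lambda_n}^\delta M_n>u)$ diverges for every $u>0$ whenever $(2)$ fails; the second Borel--Cantelli lemma then gives $\sup_n\abs{\lambda_n}^\delta M_n=+\infty$ a.s., contradicting the boundedness of càdlàg paths on the compact interval $[0,T]$. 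Hence $(3)$ fails, and the cycle is complete. The technical heart of the argument is the sharp a.s. zero--one dichotomy underlying Steps 1 and 2 — Kolmogorov's three-series theorem together with the exact polynomial tails of $\alpha$-stable laws and of the running suprema $\sup_{s\leq T}\abs{l_n(s)}$; once this is in hand, the three implications assemble as above, the only genuinely new ingredient being the elementary but crucial fact that the stochastic convolution preserves the jumps of $L$.
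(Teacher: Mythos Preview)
The paper does not prove Proposition~\ref{ProLiu}; it is quoted verbatim from \cite{Liu-Zha2012} and no argument is given in the text (the next section begins immediately after the statement). There is therefore no ``paper's own proof'' to compare your proposal against.

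That said, your outline is essentially the argument of Liu--Zhai and is sound in its architecture: the coordinatewise diagonalisation, the Kolmogorov three-series dichotomy for $(1)\Leftrightarrow(2)$ exploiting the exact $\alpha$-stable tails, the integration-by-parts identity $L_A=L-V$ with $V$ continuous for $(2)\Rightarrow(3)$, and the observation that $\Delta L_A=\Delta L$ combined with Borel--Cantelli for $(3)\Rightarrow(2)$. One minor caution: in Step~1 the claim that $\E[(\abs{\lambda_n}^\delta\abs{l_n(t)})^2\wedge1]$ is comparable to $|\sigma_n\lambda_n^\delta|^\alpha$ depends on the precise normalisation of the L\'evy measure $\nu_n$ (i.e., whether $\sigma_n$ denotes the scale parameter or the tail constant); with $\nu_n(d\xi)=C\sigma_n\abs{\xi}^{-\alpha-1}d\xi$ as written in the paper, the tail constant is $\sigma_n$ and the truncated moment is $\asymp\sigma_n\abs{\lambda_n}^{\alpha\delta}$ rather than $\sigma_n^\alpha\abs{\lambda_n}^{\alpha\delta}$, so either the paper intends $\nu_n(d\xi)=C\sigma_n^\alpha\abs{\xi}^{-\alpha-1}d\xi$ or condition~(2) should read $\sum_n\sigma_n\abs{\lambda_n}^{\alpha\delta}<\infty$. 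This is a bookkeeping issue, not a flaw in your reasoning. In Step~3 you should also make explicit the baseline assumption that $L$ is already a L\'evy process on the ambient space $H$ (so that $L_A$ and the identity $L_A=L-V$ are well defined in $H$), which is implicit in the paper's setup; otherwise the jump-matching argument has nothing to anchor to.
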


\section{The semilinear stochastic problem}\label{WP-SPDE}
In this section, we will study the well-posedness of the main equation addressed in this paper. 
Let $T>0$. We consider the following stochastic evolution equation
\begin{gather}\label{eqFO}
\eqsys{
dX(t,x)=\big(AX(t)+F(X(t))\big)dt+ RdW(t)+dL(t), & t\in [0,T]; \\
X(0)=x\in {H},
}
\end{gather}
where the coefficients are defined by the following assumptions. Below we will  provide  examples in which the assumptions are satisfied.

\begin{hyp}\label{Hyp} $ $\\
\begin{enumerate}[\rm(i)]
\item $H$ is a real separable Hilbert space and $E$ is a real separable Banach space continuously embedded in $H$.

\item $R:H\rightarrow H$ is a linear bounded operator.

\item\label{HHypA} $A:{\rm Dom}(A)\subseteq H\rightarrow H$ is a linear negative self-adjoint operator. The parts $A_E$ of $A$ in $E$ is a linear dissipative operator that generates an analytic semigroup $\{e^{tA}\}_{t\geq 0}$ on $E$ (see Subsections \ref{DD} and \ref{SemiSemi} for the definitions).

\item\label{HHypF} $F:E\rightarrow H$ is a measurable function and it maps bounded sets of $E$ in bounded sets of ${H}$. Let $F_E$ be the part of $F$ in $E$. The domain ${\rm Dom}(F_E)$ is a non-empty Borel subset of $E$. Moreover $F-\zeta_F\Id$ is $m$-dissipative in ${H}$ and $F_{E}-\zeta_F\Id$ is $m$-dissipative in $E$ (see Definition \ref{defi-dissi}).

\item\label{HHypZ} $\{W(t)\}_{t\geq 0}$ is a $H$-cylindrical Wiener process and $\{L(t)\}_{t\geq 0}$ is a pure-jump Lévy process on $H$ such that the process $\{Z_A(t)\}_{t\geq 0}$ defined by
\[
Z_A(t):=\int_0^te^{(t-s)A}RdW(s)+\int_0^t e^{(t-s)A}dL(t),
\]
is a càdlàg process in $E$.
Moreover, we assume that $\{ F(Z_A(s))\}_{s \in [0,T]}$ is a well-defined  $\{\mathcal{F}_t\}_{t\geq 0}$-adapted $E$-valued process with integrable trajectories, i.e., 
 \begin{equation}\label{IntZZ}
\int^T_0\norm{F(Z_A(s))}_Eds<+\infty,   ,\quad \mathbb{P}{\rm -a.s.}
\end{equation}
\end{enumerate} 
\end{hyp}

\begin{rmk}
We note that the dissipative assumptions on $A$ and $F$ are equivalent to the following two conditions:
\begin{enumerate}[\rm(a)]
\item there exists $\zeta_A\in\R$ such that $A-\zeta_A I$ is dissipative in $H$ and $A_E-\zeta_A I$ is dissipative in $E$,
\item  there exists $\zeta_F\in\R$ such that $F-\zeta_F I$ is dissipative in $H$ and $F_{E}-\zeta_F I$ is dissipative in $E$.
\end{enumerate}
In this case, we replace coefficients $A$ and $F$ with $\widetilde{A}=A-\zeta_A I$ and $\widetilde{F}=F+\zeta_A I$, respectively. In this way $\widetilde{A}$ and $\widetilde{F}$ verify Hypotheses \ref{Hyp} with $\zeta=\zeta_A+\zeta_F$.
\end{rmk}

\begin{ex}[$A$ satisfying Hypotheses \ref{Hyp}\eqref{HHypA}]\label{HypA}
Let ${H}=L^2(\mathcal{O},\lambda)$ where $\mathcal{O}$ is a open bounded subset of $\R^d$ having regular boundary and $\lambda$ is the Lebesgue measure. Let $A:{\rm Dom}(A)\subseteq{H}\rightarrow{H}$ be the realization of the Laplacian operator in ${H}$ with Dirichlet, Neumann or Robin Boundary conditions. Then $A$ verifies Hypotheses \ref{Hyp}\eqref{HHypA} with both $E=C(\overline{\mathcal{O}})$ and $E=L^{q}(\mathcal{O})$ for every $q\geq 2$.
\end{ex}

\begin{rmk}
Similarly to \cite[Hypotheses 7.3]{Dap-Zab2014}, we have not assumed that ${\rm Dom}(A_E)$ is dense in $E$. In the case where $A$ is a realization of the Laplacian operator, this fact allows us to consider the case $E = C(\overline{\mathcal{O}})$; indeed, not every choice of boundary conditions leads to a realization of the Laplacian with dense domain in $E$.
\end{rmk}

\begin{rmk}
We emphasize that the auxiliary result used to study \eqref{eqFO} holds under more general assumptions than those in  Hypothesis \ref{Hyp}, see Proposition \ref{proyos}. In particular, it could allow to consider operators \( A \) that are not necessarily self-adjoint. This would enable us to consider cases where \( A \) is the realization of a more general second-order differential operator; see, for instance, \cite[Section 6.1]{Cer2001}. 
\end{rmk}

\begin{ex}[$F$ satisfying Hypotheses \ref{Hyp}\eqref{HHypF}]\label{ExF}
In the same framework of Example \ref{HypA}, we consider a function $b:\overline{\mathcal{O}}\times \R\rightarrow\R$ defined by
\begin{equation}\label{bbb}
b(\xi,s):=-C_{2m+1}(\xi)s^{2m+1}+\sum^{2m}_{k=0}C_k(\xi)s^k, \qquad \xi\in \overline{\mathcal{O}},\ s\in\R,
\end{equation}
where $C_0,\ldots,C_{2m}$ are continuous functions from $\overline{\mathcal{O}}$ to $\R$ and $C_{2m+1}:\overline{\mathcal{O}}\rightarrow\R$ is a continuous and strictly positive function. We consider the Nemytskii operator $F:E\subseteq H\rightarrow H$ defined by 
\begin{equation}\label{Nemy}
F(x)(\xi):=b(\xi,x(\xi)),\qquad x\in C(\overline{\mathcal{O}}),\ \xi\in \overline{\mathcal{O}}.
\end{equation}
$F$ verifies Hypotheses \ref{Hyp}\eqref{HHypF} both with $E=C(\overline{\mathcal{O}})$ (see for instance \cite[Section 6.1.1]{Cer2001}) and with $E=L^{2(2m+1)}(\mathcal{O})$ (see for instance \cite[Chapter 4]{Dap2004}). To be precise in this case both $F:E\rightarrow H$ is continuous.
\end{ex}

\begin{ex}[$R$ satisfying Hypotheses \ref{Hyp}\eqref{HHypZ}]

In the same framework of Example \ref{HypA}. Let $F$ be defined by \eqref{Nemy}. If $E=C(\overline{\mathcal{O}})$ then $F_E(C(\overline{\mathcal{O}}))\subseteq C(\overline{\mathcal{O}})$ and $F_E:E\rightarrow E$ is locally Lipschitz. If $E=L^{2(2m+1)}(\mathcal{O})$ then $F_E(L^{2(2m+1)^2}(\mathcal{O}))\subseteq L^{2(2m+1)}(\mathcal{O})$ and $F_E:L^{2(2m+1)^2}\rightarrow E$ is continuous.\\
{ Choosing $R=(-A)^{-\gamma}$ with $\gamma\geq 0$ and $\{L(t)\}_{t\geq 0}$ Lévy process on $H_\delta={\rm Dom}((-A)^{\delta})$ with $\delta\geq 0$. In Propositions \ref{PropCarlo} and \ref{PropCarlo-Levy} are collected some sufficient conditions on $\gamma$ and $\delta$ to guarantee that $\{Z_A(t)\}_{t\geq 0}$ is a $C(\overline{\mathcal{O}})$-valued càdlàg process and so also $L^{q}(\mathcal{O})$-valued càdlàg process for every $q\geq 1$. See also Proposition \ref{ProLiu} for the $\alpha$-stable case.} \\
These  arguments guarantee that under the assumptions on $\delta$ given by Propositions \ref{PropCarlo}, \ref{PropCarlo-Levy} and \ref{ProLiu} our Hypotheses \ref{Hyp}\eqref{HHypZ} hold both with $E=C(\overline{\mathcal{O}})$ and with $E=L^{2(2m+1)}(\mathcal{O})$ (recall that, ${\mathbb P}-a.s.$, $Z_A$ is an $L^{2(2m+1)^2}(\mathcal{O})-$valued càdlàg process). 
\\
We underline that Hypotheses \ref{Hyp}\eqref{HHypZ} is similar to the one in \cite[(H2) page 183]{Pes-Zab2007}.

\end{ex}

In this section we will prove that for every $x\in E$ and $T>0$ the SPDE \eqref{eqFO} has unique mild solution in the following sense.

\begin{defi} 
Let $x\in E$ and $T>0$. We call mild solution to \eqref{eqFO} any $E$-valued $\{\mathcal{F}_t\}_{t\geq 0}$-adapted càdlàg process $\{X(t,x)\}_{t\in [0,T]}$ such that, for every $t\in [0,T]$, we have
\begin{align}\label{mildF}
 X(t,x)=e^{tA}x+\int_0^te^{(t-s)A}F(X(s,x))ds+Z_A(t),\quad\mathbb{P}{\rm-a.s.}
\end{align}
We say that the mild solution of \eqref{eqFO} is unique if  $\{X_1(t,x)\}_{t\in [0,T]}$, $\{X_2(t,x)\}_{t\in [0,T]}$ are two mild solutions to \eqref{eqFO} defined on the same complete filtered probability space then it satisfy
\[
X_1(t,x)=X_2(t,x),\quad \forall t\in [0,T], \quad \mathbb{P}{\rm-a.s.}
\]
\end{defi}

Hypotheses \ref{Hyp} are not sufficient to ensure the existence and uniqueness of a mild solution; an additional regularity assumption on the perturbation \( F \) is required. However, this assumption depends on the properties of the space \( E \); see Example \ref{ExF}. Below, we present two alternative assumptions that guarantee the well-posedness of \eqref{eqFO}.

\begin{hyp}\label{Hyp1C}
Assume that Hypotheses \ref{Hyp} holds true, $F(E)\subseteq E$ and $F_E=F_{|E}:E{\rightarrow} E$ is locally Lipschitz, namely it is Lipschitz continuous on the bounded sets of $E$.
\end{hyp}
\begin{ex}[$F$ satisfying Hypotheses \ref{Hyp1C}]\label{ExC}
In the same framework of Example \ref{ExF}. $F$ verifies Hypotheses \ref{Hyp1C} with $E=C(\overline{\mathcal{O}})$, see \cite[Section 6.1.1]{Cer2001}.
\end{ex}

\begin{hyp}\label{Hyp2L}
Assume that Hypotheses \ref{Hyp} holds true, $E$ is reflexive and there exists $x_0\in E$ such that $F(x_0)=\zeta_Fx_0$. Moreover the following implication holds true:
If $\{x_n\}_{n\in\N}\subseteq E$ is uniformly bounded in $E$ and converges strongly in ${H}$ to $x\in E$, then for every $h\in E$ we have
\begin{equation}\label{convergenza-deboleV}
|\scal{F(x_n)-F(x)}{h}_{H}|\rightarrow 0.
\end{equation}
\end{hyp}

\begin{ex}[$F$ satisfying Hypotheses \ref{Hyp2L}]\label{ExLp}
We consider the same framework as in Example \ref{ExF} with $E=L^{2(2m+1)}(\mathcal{O})$. Clearly, Hypotheses \ref{Hyp2L} are more technical than Hypotheses \ref{Hyp1C}. Unfortunately, $F$ cannot verify Hypotheses \ref{Hyp1C} with $E=L^{2(2m+1)}(\mathcal{O})$ but it verifies Hypotheses \ref{Hyp2L}. In the case where $b$ is defined by \eqref{bbb} with $C_0,\ldots,C_{2m+1}$ constants and $C_0=0$, $F$ is given by \eqref{Nemy}, $E=L^{2(2m+1)}(\mathcal{O})$,
we prove that \eqref{convergenza-deboleV} holds.\\
Let $\{x_n\}_{n\in\N}\subseteq E$ and $x\in E$ as in Hypotheses \ref{Hyp2L}. For every $h\in E$ and $n\in\N$ we have
\begin{align*}
|\scal{F(x_n)-F(x)}{h}_H| \leq K_1\sum^{2m+1}_{k=1}\int_\mathcal{O}|x_n(\xi)^k-x(\xi)^k||h(\xi)|d\xi,
\end{align*}
where $K_1=\max\left\{C_1,\ldots C_{2m+1}\right\}$. By factorizing the polynomials $(a^k-b^k)$ with $k\in\N$ and by the Young inequality, there exists a constant $K_2>0$ such that
\begin{align*}
|\scal{F(x_n)-F(x)}{h}_H| \leq K_2\sum^{2m+1}_{k=1}\int_\mathcal{O}|x_n(\xi)-x(\xi)||h(\xi)|(|x_n(\xi)|^{k-1}+|x(\xi)|^{k-1})d\xi,
\end{align*}
applying the H\"older inequality (with $p=q=2$) we obtain
\begin{align*}
|\scal{F(x_n)-F(x)}{h}_H| &\leq K_2\abs{x_n-x}_{L^2}\sum^{2m+1}_{k=1}\left(\int_\mathcal{O}|h(\xi)|^2(|x_n(\xi)|^{k-1}+|x(\xi)|^{k-1})^2d\xi\right)^{1/2}
\end{align*}
applying again the H\"older inequality (with $p=(2m+1)$ and $q=(2m+1)/2m$) we obtain
{\small
\begin{align*}
|\scal{F(x_n)-F(x)}{h}_H| &\leq K_2\abs{x_n-x}_{L^2}\norm{h}_{L^{2(2m+1)}}\sum^{2m+1}_{k=1}\left(\int_\mathcal{O}(|x_n(\xi)|^{k-1}+|x(\xi)|^{k-1})^{\frac{2m+1}{m}}d\xi\right)^{\frac{m}{2m+1}}.
\end{align*}
}
By the Young inequality, there exists a constant $K_3>0$ such that
{\small
\begin{align*}
|\scal{F(x_n)-F(x)}{h}_H| \leq K_3\abs{x_n-x}_{L^2}\norm{h}_{L^{2(2m+1)}}\sum^{2m+1}_{k=1}\Bigg[&\left(\int_\mathcal{O}|x_n(\xi)|^{\frac{(k-1)(2m+1)}{m}}d\xi\right)^{\frac{m}{2m+1}}\\
&+\left(\int_\mathcal{O}|x(\xi)|^{\frac{(k-1)(2m+1)}{m}}d\xi\right)^{\frac{m}{2m+1}}\Bigg],
\end{align*}
}
noting that $(k-1)/2m\leq 1$ for every $k\leq 2m+1$, by the Jensen inequality we obtain
{\small
\begin{align}
|\scal{F(x_n)-F(x)}{h}_H| &\leq K_3\abs{x_n-x}_{L^2} \norm{h}_{L^{2(2m+1)}} \sum^{2m+1}_{k=1}\left(\norm{x_n}_{L^{2(2m+1)}}^{k-1}+\norm{x}_{L^{2(2m+1)}}^{k-1}\right),\label{stima-convergenza}
\end{align}
}
and so that, since $\{x_n\}_{n\in\N}$ is equibounded in $E=L^{2(2m+1)}(\mathcal{O})$ and converges strongly in $H=L^{2}(\mathcal{O})$ to $x\in E$, then \eqref{stima-convergenza} yields \eqref{convergenza-deboleV}.

\end{ex}

\begin{rmk}
Hypotheses \ref{Hyp2L} represents the abstract framework corresponding to the specific equation studied in \cite{Mar-Roc2010}. In this case, in the proof of Proposition \ref{proyos}, we have added some details that were partially omitted in \cite{Mar-Roc2010}.
\end{rmk}

Now we can state the main result of this paper about the well-posedness of \eqref{eqFO} in the case $x\in E$. We note that this results is based on Proposition \ref{proyos} (see also Chapter 10 in \cite{Pes-Zab2007}).
\begin{thm}\label{solMild}
Assume that either Hypotheses \ref{Hyp1C} or Hypotheses \ref{Hyp2L} holds. For every $T>0$ and $x\in E$, \eqref{eqFO} has unique $E$-valued càdlàg mild solution $\{X(t,x)\}_{t\in [0,T]}$. Moreover for every $t>0$ and $x,z\in E$ we have
{\small
\begin{align}
&\abs{X(t,x)}_{H}\leq e^{\zeta t}\abs{x}_{H}+\int_0^te^{\zeta (t-s)}\left(\abs{F(Z_A(s))}_{H}+2\max\{0,\zeta\}\abs{Z_A(s)}_{H}\right)ds+\abs{Z_A(t)}_{H},\;\; \mathbb{P}{\rm-a.s.}\label{stindXX}\\
&\norm{X(t,x)}_E\leq e^{\zeta t}\norm{x}_E+\int_0^te^{\zeta (t-s)}\left(\norm{F(Z_A(s))}_E+2\max\{0,\zeta\}\norm{Z_A(s)}_E\right)ds+\norm{Z_A(t)}_E,\;\;\mathbb{P}{\rm-a.s.}\label{stindEX}	\\
&\abs{X(t,x)-X(t,z)}_{H}\leq  e^{\zeta t}\abs{x-z}_{H},\;\;\mathbb{P}{\rm-a.s.}\label{lipXX}\\
&\norm{X(t,x)-X(t,z)}_E\leq  e^{\zeta t}\norm{x-z}_E,\;\;\mathbb{P}{\rm-a.s.}\label{lipEX},
\end{align}
where $\zeta$ is the constant defined in \eqref{zeta}.
}
\end{thm}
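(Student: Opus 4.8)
The plan is to reduce the stochastic equation \eqref{eqFO} to a pathwise deterministic problem, exactly as indicated before the statement of Hypotheses \ref{Hyp}. Concretely, I would fix $\omega$ outside a $\mathbb{P}$-null set so that $\{Z_A(t)\}_{t\in[0,T]}$ is a càdlàg path in $E$ (which is guaranteed by Hypotheses \ref{Hyp}(vi)), and set $Y(t):=X(t,x)-Z_A(t)$. Then \eqref{mildF} becomes the integral equation
\begin{align*}
Y(t)=e^{tA}x+\int_0^t e^{(t-s)A}F\big(Y(s)+Z_A(s)\big)ds,\qquad t\in[0,T],
\end{align*}
which is a deterministic mild equation for $Y$ with a time-dependent nonlinearity $G(s,y):=F(y+Z_A(s))$. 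The heart of the matter is then an abstract deterministic well-posedness result — this is precisely Proposition \ref{proyos} referenced in the statement — asserting that for a dissipative generator $A_E$ on $E$, a perturbation $F$ of the type in Hypotheses \ref{Hyp1C} or \ref{Hyp2L}, and a càdlàg forcing term, the above equation has a unique càdlàg solution in $E$, together with the a priori bounds. I would first carry out this reduction carefully (checking adaptedness and càdlàg regularity of $Y$, hence of $X$, and that the Bochner integral $\int_0^t e^{(t-s)A}F(X(s,x))ds$ is well defined in $E$ using Hypotheses \ref{Hyp}(iv) and the local boundedness of $F_E$ on the bounded — because càdlàg on a compact interval — range of the path), and then invoke Proposition \ref{proyos} on each $\omega$.

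For the uniqueness and the four estimates \eqref{stindXX}--\eqref{lipEX}, the key tool is dissipativity combined with the variation-of-constants representation. Writing $\zeta=\zeta_A+\zeta_F$ (the constant from \eqref{zeta}) and subtracting the equations for two solutions $X_1,X_2$, the difference $D(t)=X_1(t,x)-X_2(t,x)$ — in the $Y$-variable a genuine classical/strong solution after a Yosida-type approximation of $A_E$ — satisfies $D'=A_E D + F(X_1)-F(X_2)$, and pairing with $D$ (in $H$ for \eqref{lipXX}, using the duality map of $E$ for \eqref{lipEX}) and using the dissipativity of $A-\zeta_A\Id$ and of $F-\zeta_F\Id$ gives $\tfrac{d}{dt}\|D(t)\|\le \zeta\|D(t)\|$; Gronwall then yields \eqref{lipXX} and \eqref{lipEX}, and in particular uniqueness. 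For the growth bounds \eqref{stindXX} and \eqref{stindEX}, I would instead compare $X(t,x)$ with the forcing term $Z_A(t)$: setting again $Y(t)=X(t,x)-Z_A(t)$, one has $Y'=A_E Y + F(Y+Z_A)$; rewriting $F(Y+Z_A)=\zeta_F(Y+Z_A)+\big(F(Y+Z_A)-\zeta_F(Y+Z_A)\big)$ and using that in Hypotheses \ref{Hyp2L} there is $x_0$ with $F(x_0)=\zeta_F x_0$ (so that $y\mapsto F(y)-\zeta_F y$ is dissipative and vanishes at $x_0$, giving the control $\|F(Y+Z_A)-\zeta_F(Y+Z_A)\|\le \|F(x_0)-\zeta_F x_0\|+\text{(dissipative term)}$, hence effectively the bound by $\|F(Z_A(s^-))\|+2|\zeta_F|\,\|Z_A(s^-)\|$ after pairing), one obtains a differential inequality $\tfrac{d}{dt}\|Y(t)\|\le \zeta\|Y(t)\| + \|F(Z_A(t^-))\| + 2|\zeta_F|\,\|Z_A(t^-)\|$; integrating and adding back $\|Z_A(t)\|$ via the triangle inequality gives \eqref{stindXX} (in $H$) and \eqref{stindEX} (in $E$).

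A technical point running through all of this is that $X(t,x)$ is only a mild solution, so the formal differentiations above are not literally legitimate; the standard remedy — and what I would do inside Proposition \ref{proyos} — is to approximate $A_E$ by its Yosida approximants $A_n=nA_E(n-A_E)^{-1}$, solve the approximating equations (whose solutions are differentiable), establish the estimates for them uniformly in $n$, and pass to the limit using the convergence of the Yosida semigroups and the local Lipschitz (resp. demicontinuity) properties of $F$. Under Hypotheses \ref{Hyp1C} this limiting argument is a routine fixed-point-plus-Gronwall scheme in $C([0,\tau];E)$ patched over $[0,T]$ by the a priori bound; under Hypotheses \ref{Hyp2L} — where $F_E$ need not even be continuous from $E$ to $E$ — one works with the weak topology of the reflexive space $E$ and uses the closedness/demiclosedness coming from \eqref{convergenza-deboleV} to pass to the limit, which is the more delicate case.

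The main obstacle I anticipate is exactly this last point: making the dissipativity computation rigorous in the $E$-norm when $F_E$ is merely demicontinuous (Hypotheses \ref{Hyp2L}). One must justify that the duality-map pairing $\langle A_n Y + F(Y+Z_A), \, \partial\|Y\|\rangle$ behaves correctly in the limit, which requires reflexivity of $E$, the hypothesis $F(x_0)=\zeta_F x_0$, and a careful use of \eqref{convergenza-deboleV} together with lower semicontinuity of the norm — this is where the ``details partially omitted in \cite{Mar-Roc2010}'' enter. Everything else (the pathwise reduction, the $H$-estimates, and the $C(\overline{\mathcal{O}})$-case under Hypotheses \ref{Hyp1C}) is comparatively standard, following \cite[Chapter 6]{Cer2001} and \cite{Big2021} with the forcing term $W_A$ replaced by the càdlàg process $Z_A$.
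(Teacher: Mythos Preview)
Your approach is essentially the paper's: a pathwise reduction $Y(t)=X(t,x)-Z_A(t)$ followed by an invocation of Proposition~\ref{proyos} on each $\omega$, with the four estimates inherited directly from \eqref{stiX}--\eqref{lipE}. Two small remarks: (i) the growth bounds \eqref{stindXX}--\eqref{stindEX} do not require the fixed point $x_0$ from Hypotheses~\ref{Hyp2L}---the correct split is $F(Y+Z_A)=[F(Y+Z_A)-F(Z_A)]+F(Z_A)$, and dissipativity of $F-\zeta_F\Id$ handles the first bracket when paired with $\partial\|Y\|$ (this is how the paper proves \eqref{stidEth}); (ii) the adaptedness of $\{X(t,x)\}$ is not automatic from the pathwise construction and the paper devotes a separate argument to it---one approximates by the doubly regularized equation $Y_{\delta,\theta}$ (Yosida for both $A$ and $F$), shows each $Y_{\delta,\theta}$ is adapted via a Picard iteration (since the coefficients are then globally Lipschitz), and passes to the limit $\theta,\delta\to0$ using \eqref{convdtE} and \eqref{convdX}. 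You flag adaptedness but should be aware that this is where the approximation scheme is actually needed at the level of Theorem~\ref{solMild}, not merely inside Proposition~\ref{proyos}.
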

\begin{proof}
Fix $x\in E$ and $T>0$. Fix an $H$-cylindrical Wiener process $\{W(t)\}_{t\geq 0}$ and a Lévy process $\{L(t)\}_{t\geq 0}$ defined on a same complete filtered probability space $(\Omega,\mathcal{F},\{\mathcal{F}_t\}_{t\geq 0},\mathbb{P})$. By Hypotheses \ref{Hyp}  there exists $\Omega_0\in \mathcal{F} $ such that $\mathbb{P}(\Omega_0)=1$ and for every $\omega\in\Omega_0$ the function $t\rightarrow Z_A(t)(\omega)$ is càdlàg from $[0,T]$ to $E$.\\

\textbf{Uniqueness} If $\{X_1(t,x)\}_{t\in [0,T]}$ and $\{X_2(t,x)\}_{t\in [0,T]}$ are two mild solutions to \eqref{eqFO} defined on $(\Omega,\mathcal{F},\{\mathcal{F}_t\}_{t\geq 0},\mathbb{P})$ then for every $\omega\in\Omega_0$ both functions $t\in [0,T]\rightarrow X_1(\omega)-Z_A(\omega)$ and $t\in [0,T]\rightarrow X_2(\omega)-Z_A(\omega)$ are mild solutions to
\begin{gather}\label{eqV}
\eqsys{
\dfrac{dy}{dt}(t)=Ay(t)+F(y(t)+f(t^-)), & t>0;\\
y(0)=x\in E
}
\end{gather}
with $f=Z_A(\omega)$, so by the uniqueness established in Proposition \ref{proyos} we have
\[
\sup_{t\in [0,T]}\abs{X_1(t,x)-X_2(t,x)}_H=0,\qquad \mathbb{P}-{\rm a.s.}
\]

\textbf{Existence} Let $\{Y(t,x)\}_{t\in [0,T]}$ be the process defined for every $\omega\in\Omega_0$  by 
\begin{equation*}
Y(\cdot,x)(\omega):=y(\cdot,x),
\end{equation*}
where $y(\cdot,x)$ is the unique mild solution to \eqref{eqV} with $f(\cdot):=Z_A(\cdot)(\omega)$ given by Proposition \ref{proyos}. Obviously the process $\{X(t,x)\}_{t\in [0,T]}$ defined by
\begin{equation}\label{deX}
X(t,x):=Y(t,x)+Z_A(t),
\end{equation}
solves the mild form of \eqref{eqFO} $\mathbb{P}$-a.s. { Proposition \ref{proyos} guarantee that $y(\cdot,x)\in C((0,T];E)$, hence, since we have assumed that $\{Z_A(t)\}_{t\geq 0}$ is càdlàg then $\{X(t,x)\}_{t\in [0,T]}$ is càdlàg in $E$. The pathwise estimate \eqref{stindXX}, \eqref{stindEX}, \eqref{lipXX} and \eqref{lipEX} follows directly from the estimates on $y(\cdot,x)$ given by Proposition \ref{proyos} and the definition \eqref{deX} of $X(t,x)$.}\\
It remains to prove that $\{X(t,x)\}_{t\in [0,T]}$ is adapted to $\{\mathcal{F}_t\}_{t\geq 0}$. We note that if we prove that $\{Y(t,x)\}_{t\in [0,T]}$ is adapted to $\{\mathcal{F}_t\}_{t\geq 0}$ then also $\{X(t,x)\}_{t\in [0,T]}$ is adapted to $\{\mathcal{F}_t\}_{t\geq 0}$ and the proof is concluded. For every small $\delta,\theta\geq 0$ we study the following SPDE
\begin{gather*}
\eqsys{
\dfrac{dY_{\delta,\theta}}{dt}(t,x)=A_\theta Y_{\delta,\theta}(t,x)+F_{\delta}(Y_{\delta,\theta}(t,x)+Z_A(t^-)), & t>0;\\
Y_{\delta,\theta}(0,x)=x\in E,
}
\end{gather*}
where $F_\delta$ and $A_\theta$ are the Yosida approximants of $F$ and $A$ defined in Section \ref{Sec:Yosida}. By Lemma \ref{soldt} and the same arguments used above, there exists a unique $E$-valued càdlàg process $\{Y_{\delta,\theta}(t,x)\}_{\in  [0,T]}$ such that
\[
Y_{\delta,\theta}(t,x)=x+\int_0^t\left[A_\theta +\zeta_A\Id \right]Y_{\delta,\theta}(s,x)ds+ \int_0^tF_\delta(Y_{\delta,\theta}(s,x)+Z_A(s))ds,\qquad \mathbb{P}-{\rm a.s.}
\]
Moreover by \eqref{convdtE} we have
\[
\lim_{\delta\rightarrow 0}\lim_{\theta\rightarrow 0}\sup_{t\in [0,T]}\abs{Y_{\delta,\theta}(t,x)-Y(t,x)}_H=0,\qquad \mathbb{P}-{\rm a.s.},
\]
so if $\{Y_{\delta,\theta}(t,x)\}_{t\in [0,T]}$ is adapted to $\{\mathcal{F}_t\}_{t\geq 0}$ then also $\{Y(t,x)\}_{t\geq 0}$ is adapted to $\{\mathcal{F}_t\}_{t\geq 0}$. For every $m\in\N$ we consider the processes $\{Y^{(m)}_{\delta,\theta}(t,x)\}_{t\in [0,T]}$ defined, for every $t\in [0,T]$, by
{\small
\begin{align*}
&Y^{(0)}_{\delta,\theta}(t,x)=x,\\
&Y^{(m+1)}_{\delta,\theta}(t,x)=x+\int_0^t A_\theta Y^{(m)}_{\delta,\theta}(s,x)ds+ \int_0^tF_\delta(Y^{(m)}_{\delta,\theta}(s,x)+Z_A(s))ds,\qquad m>1.
\end{align*}
}
By construction $\{Y^{(m)}_{\delta,\theta}(t,x)\}_{t\in [0,T]}$ is adapted to $\{\mathcal{F}_t\}_{t\geq 0}$ for every $m\in\N$. Moreover since $F_\delta:H\rightarrow H$ is Lipschitz continuous and $A_\theta\in\mathcal{L}(H)$, using the classical Picard iteration scheme it is not difficult to show that
\[
\lim_{m\rightarrow +\infty}\sup_{t\in [0,T]}\abs{Y^{m}_{\delta,\theta}(t,x)-Y_{\delta,\theta}(t,x)}_H=0,\qquad \mathbb{P}-{\rm a.s.},
\]
and so $\{Y_{\delta,\theta}(t,x)\}_{t\in [0,T]}$ is adapted to $\{\mathcal{F}_t\}_{t\geq 0}$.
\end{proof}

\begin{ex}\label{ExNuovo}
We observe that under Hypotheses \ref{Hyp1C}, the function \( F \) is smooth from \( E \) to \( E \), so it is not necessary to consider the Hilbert space \( H \) in order to apply Theorem \ref{solMild}. This fact allows us to consider functions $F$ that are well-defined only on $E$. We show an interesting example. We consider the same framework of \eqref{ExF} with $E=C([0,1])$, but as $F:E\rightarrow E$ we consider
\[
F(x)(\xi):=b(\xi,x(\xi))+g(\max_{s\in [0,\xi]}|x(s)|),\qquad x\in C(\overline{\mathcal{O}}),\ \xi\in [0,1]
\] 
where $b$ is given by \eqref{bbb} and $g$ is a Lipschitz continuous function, see \cite{Cer-Dap-Fla2013} for the properties of $F$ when $b\equiv 0$. In this case $F$ is well defined only on $E=C([0,1])$ and Hypotheses \ref{Hyp1C} hold true, so Theorem \ref{solMild} is applicable.
\end{ex}

\begin{rmk}\label{remark-finale}$ $
\begin{enumerate}
\item Since Theorem \ref{solMild} does not assume the existence of moments of any order, it can also be applied to $\alpha$-stable processes with $\alpha\in (0,2)$. Indeed in the framework of  Example \ref{ExC} or \ref{ExLp}, choosing $\mathcal{O}=[0,1]$, $R=\Id$, and $\{L(t)\}_{t\geq 0}$ a Lévy process in $H_\delta$ with $\delta>1/4$ defined by \eqref{Alphastabile}, then by Propositions \ref{PropCarlo} and \ref{ProLiu} all the assumptions of Theorem \ref{solMild} are verified.

\item Taking into account Example \ref{ExPZ}, we underline that Theorem \ref{solMild} ensure that the mild solution to \eqref{eqFO} is càdlàg in $E$ even in cases where the $\{L(t)\}_{t\geq 0}$ is not càdlàg in $E$.

\item Assuming additional conditions on the Lévy process $\{L(t)\}_{t\geq 0}$, by estimates \eqref{stindXX} and \eqref{stindEX}, it is possible to deduce estimate on the moments of the mild solution to \eqref{eqFO}, similar to the one in \cite{Mar-Pre-Roc2010,Mar-Roc2010}.
\end{enumerate}
\end{rmk}

Now we exploit the density of $E$ in ${H}$ to define a process $\{X(t,x)\}_{t\in [0,T]}$ for every $x\in H$.

\begin{thm}\label{limmild}
Assume that either Hypotheses \ref{Hyp1C} or Hypotheses \ref{Hyp2L} holds. For every $x\in H$ and $T>0$ there exists a unique $H$-valued càdlàg process $\{X(t,x)\}_{t\in [0,T]}$ such that for every $\{x_n\}_{n\in\N}\subseteq E$ converging to $x$ we have
\begin{align}\label{CX}
&\lim_{n{\rightarrow}+\infty}\sup_{t\in [0,T]}\abs{X(\cdot,x_n)-X(\cdot,x)}_{H}=0,\qquad \mathbb{P}-{\rm a.s.},
\end{align}
where $\{X(t,x_n)\}_{t\in [0,T]}$ is the unique mild solution of \eqref{eqFO} with initial datum $x_n$. In addition for every $t\in [0,T]$ and $x,z\in {H}$ we have
{\small
\begin{align}
&\abs{X(t,x)}_{H}\leq e^{\zeta t}\abs{x}_{H}+\int_0^te^{\zeta (t-s)}\left(\abs{F(Z_A(s))}_{H}+2\max\{0,\zeta\}\abs{Z_A(s)}_{H}\right)ds+|Z_A(t)|_H,\;\;\mathbb{P}{\rm-a.s.}\label{SX}\\
&\abs{X(t,x)-X(t,z)}_{H}\leq  e^{\zeta t}\abs{x-z}_{H},\;\;\mathbb{P}{\rm-a.s.}\label{LX},
\end{align}
}
where $\zeta$ is the constant defined in \eqref{zeta}.
\end{thm}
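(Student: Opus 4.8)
The plan is to extend the solution map $x\mapsto X(\cdot,x)$ from $E$ to $H$ by density, using the contraction estimate \eqref{lipXX} as the crucial tool. Recall from the proof of Theorem \ref{solMild} that the construction of $X(\cdot,y)$ for $y\in E$ is pathwise: there is a set $\Omega_0\in\mathcal{F}$ with $\mathbb{P}(\Omega_0)=1$ such that for every $\omega\in\Omega_0$ the trajectory $t\mapsto Z_A(t)(\omega)$ is càdlàg in $E$ and $X(\cdot,y)(\omega)$ is obtained by applying Proposition \ref{proyos} to $f=Z_A(\cdot)(\omega)$. Consequently \eqref{lipXX} holds not merely $\mathbb{P}$-a.s., but for every $\omega\in\Omega_0$ and every pair $y,z\in E$ at once, being a consequence of the deterministic estimate in Proposition \ref{proyos}.

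First I would fix $x\in H$ and, by density of $E$ in $H$, a sequence $\{x_n\}_{n\in\N}\subseteq E$ with $x_n\to x$ in $H$. For $\omega\in\Omega_0$ the sequence $\{x_n\}$ is Cauchy in $H$, hence by \eqref{lipXX} one has $\sup_{t\in[0,T]}\abs{X(t,x_n)(\omega)-X(t,x_m)(\omega)}_H\le e^{\abs{\zeta}T}\abs{x_n-x_m}_H\to 0$ as $n,m\to\infty$. Thus $\{X(\cdot,x_n)(\omega)\}_n$ is Cauchy in the Banach space of càdlàg functions from $[0,T]$ to $H$ equipped with the supremum norm; I would let $X(\cdot,x)(\omega)$ be its limit (which is therefore càdlàg) on $\Omega_0$, and set it to $0$ off $\Omega_0$. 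The limit is independent of the approximating sequence: if $\{x_n'\}\subseteq E$ also converges to $x$, then $\sup_{t\in[0,T]}\abs{X(t,x_n)(\omega)-X(t,x_n')(\omega)}_H\le e^{\abs{\zeta}T}\abs{x_n-x_n'}_H\to 0$ on $\Omega_0$; this argument simultaneously establishes \eqref{CX} for an arbitrary sequence.

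It then remains to verify adaptedness, the estimates, and uniqueness. Each $X(t,x_n)$ is $\{\mathcal{F}_t\}$-adapted by Theorem \ref{solMild}, and $X(t,x)=\lim_n X(t,x_n)$ $\mathbb{P}$-a.s., so $X(t,x)$ is $\{\mathcal{F}_t\}$-adapted by completeness of the filtration. The bound \eqref{SX} follows by letting $n\to\infty$ in \eqref{stindXX}, using that the terms involving $Z_A$ do not depend on the initial datum and that $\abs{x_n}_H\to\abs{x}_H$; and \eqref{LX} follows from \eqref{lipXX} by approximating both $x$ and $z$ by sequences in $E$ and passing to the limit. Finally, if $\{\widetilde X(t,x)\}_{t\in[0,T]}$ is another $H$-valued càdlàg process satisfying \eqref{CX}, then for the sequence $\{x_n\}$ we get $\sup_{t\in[0,T]}\abs{\widetilde X(t,x)-X(t,x)}_H\le \sup_{t\in[0,T]}\abs{\widetilde X(t,x)-X(t,x_n)}_H+\sup_{t\in[0,T]}\abs{X(t,x_n)-X(t,x)}_H\to 0$ $\mathbb{P}$-a.s., whence $\widetilde X(\cdot,x)=X(\cdot,x)$ $\mathbb{P}$-a.s.

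The main point requiring care — hence the chief (if mild) obstacle — is the uniformity of \eqref{lipXX} in $\omega$ and in the initial data: the pathwise Cauchy argument and the identification of the limit are legitimate only because \eqref{lipXX} is valid on a single full-measure set $\Omega_0$ for all $y,z\in E$ at once, which is precisely what the passage through the deterministic Proposition \ref{proyos} grants. A secondary technical ingredient is the completeness of the space of $H$-valued càdlàg functions under the uniform norm together with the fact that a uniform limit of càdlàg functions is càdlàg; both are classical.
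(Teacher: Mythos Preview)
Your proposal is correct and follows essentially the same route as the paper's proof: density of $E$ in $H$ combined with the Lipschitz estimate \eqref{lipXX} to pass to the limit, with \eqref{SX} and \eqref{LX} obtained by letting $n\to\infty$ in \eqref{stindXX} and \eqref{lipXX}. You supply more detail than the paper does (notably the uniformity of \eqref{lipXX} over $\Omega_0$ and over all pairs $y,z\in E$, the adaptedness of the limit, and the explicit uniqueness argument), but the underlying strategy is identical.
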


\begin{proof}
\noindent Since $E$ is dense in $H$, for every $x\in{H}$ there exists a sequence $\{x_n\}_{n\in \N}\subseteq E$ such that
\[
\lim_{n{\rightarrow}+\infty} \abs{x_n-x}_{H}=0.
\]
By \eqref{lipXX}, for every $T>0$ and $n_1,n_2\in\N$, we have
\[
\lim_{n_1,n_2{\rightarrow}+\infty}\sup_{t\in [0,T]}\abs{X(\cdot,x_{n_1})-X(\cdot,x_{n_2})}_{H}=0,\quad \mathbb{P}{\rm-a.s.},
\]
we denote by $\{X(t,x)\}_{t\in [0,T]}$ the limit of $\{X(t,x_n)\}_{t\in [0,T]}$. We underline that by \eqref{lipXX} the limit $\{X(t,x)\}_{t\in [0,T]}$ does not depend on the sequence $\{x_n\}_{n\in \N}$, so it is the unique $H$-valued càdlàg process that verifies \eqref{CX}. Finally \eqref{stindXX} and \eqref{lipXX} yield \eqref{SX} and \eqref{LX}.
\end{proof}

\begin{defi}
For every $x\in{H}$ we call generalized mild solution of \eqref{eqFO} the limit \\$\{X(t,x)\}_{t\in [0,T]}$ given by Proposition \ref{limmild}. 
\end{defi}

\begin{rmk}\label{Open-question}
In this final remark, we list some open questions that we believe might be interesting.
\begin{enumerate}
\item Proposition \ref{valoriHgamma} and Remark \ref{BvsL} lead us to believe that the assumptions in Proposition \ref{PZvariante} are not optimal. However, utilizing tests such as the Gikhman-Skorokhod test \cite[Section III.4]{Gik-Sko1990} or the Bezandry--Fernique test \cite{Bez-Fer1990}, we were unable to obtain better results than those in Proposition \ref{PZvariante}.

\item It would be interesting to understand if it is possible to weaken the monotonicity assumptions on $F$ with assumptions similar to those used in \cite{Gyo-Kri1980}. In this direction, we point out the work \cite{Brz-Liu-Zhu2014} where, however, they consider variational solutions instead of mild ones.

\item This point is not an open question but a possible future project. We believe that by using the estimates in Theorem \ref{solMild}, it is possible to find asymptotic results analogous to those of the Brownian case contained in \cite[Chapter 8]{Cer2001}. We underline that in the framework of Example \ref{ExLp} and Remark \ref{RmkMPR}, some asymptotic results are proven in \cite{Mar-Pre-Roc2010,Mar-Roc2010}.
\end{enumerate}
\end{rmk}

\appendix

\section{Dissipative mappings and semigroups}
 The first part of the appendix is devoted to recalling some preliminary results concerning subdifferentials, dissipative mappings, and semigroup theory, which will be instrumental for the second part, where we prove an existence and uniqueness result.
\subsection{Left differentiable and dissipative mappings}\label{DD}
We refer to \cite[Appendix A]{Cer2001}, \cite[Appendix D]{Dap-Zab2014} and \cite[Section 10.1]{Pes-Zab2007} for all the results in this section.
Let $\mathcal{K}$ be a separable Banach space. For every $x\in\mathcal{K}$, we define the subdifferential $\partial \norm{x}_{\mathcal{K}}$ of $\norm{\cdot}_{\mathcal{K}}$ at $x\in{\mathcal{K}}$ as
\[
\partial \norm{x}:=\{ x^*\in {\mathcal{K}}^*\; |\; \norm{x+y}_{\mathcal{K}}\geq\norm{x}_{\mathcal{K}}+\dscal{y}{x^*}{\mathcal{K}},\;\forall\; y\in{\mathcal{K}}\}.
\]
$\partial \norm{x}_{\mathcal{K}}$ is non empty closed and convex set and for every $x\neq 0$ we have
\[
\partial \norm{x}=\{ x^*\in {\mathcal{K}}^*\; |\; \dscal{x}{x^*}{\mathcal{K}}=\norm{x}_{\mathcal{K}},\; \norm{x^*}_{\mathcal{K}^*}=1\}.
\]

Let $t_0,t_1 \in\R$ and let $f:[t_0,t_1]{\rightarrow}{\mathcal{K}}$ be a continuous function. We say that $f$ is left-differentiable if for every $t\in (t_0,t_1]$ there exists $L\in\R$ such that
\begin{equation*}
\dfrac{d^-f(t)}{dt}:=\lim_{\epsilon{\rightarrow} 0^+}\dfrac{f(t)-f(t-\epsilon)}{\epsilon}=L.
\end{equation*}
Let $u:[t_0,t_1]{\rightarrow}{\mathcal{K}}$ be a continuous and left-differentiable function. By \cite[Proposition 10.1]{Pes-Zab2007} the function $\gamma:=\norm{u}_{\mathcal{K}}:(t_0,t_1]{\rightarrow} [0,+\infty)$ is left-differentiable at any $t\in (t_0,t_1]$ and
\begin{equation}\label{Ldiff}
\dfrac{d^-\gamma}{dt}(t):=\lim_{h{\rightarrow}0^-}\dfrac{\gamma(t+h)-\gamma(t)}{h}=\min\{\dscal{u'(t)}{x^*}{\mathcal{K}}\;:\; x^*\in\partial\norm{u(t)}_{\mathcal{K}} \}.
\end{equation}
Moreover, let $b\in\R$ and let $g:[t_0,t_1]{\rightarrow} [0,+\infty)$ be a continuous function. If
\[
\dfrac{d^-\gamma}{dt}(t)\leq b\gamma(t)+g(t), 
\]
then, by \cite[Proposition A.4]{Big-Fer2024}, for every $t\in [t_0,t_1]$, we have
\begin{equation}\label{varofcost}
\gamma(t)\leq e^{b(t-t_0)}\gamma(t_0)+\int^t_{t_0}e^{b(t-s)}g(s)ds,\quad t\in [t_0,t_1].
\end{equation} 

Using the notion of subdifferential we have the following useful charaterization for the dissipative maps (see Definition \ref{defi-dissi}).

\begin{prop}\label{dissiBanach}
Let $f:{\rm Dom}(f)\subseteq \mathcal{K}{\rightarrow}\mathcal{K}$. $f$ is dissipative if and only if, for every $x,y\in {\rm Dom}(f)$ there exists $z^*\in\partial\norm{x-y}$ such that
\begin{equation}\label{disban}
\dscal{f(x)-f(y)}{z^*}{\mathcal{K}}\leq 0.
\end{equation}
If $\mathcal{K}$ is a Hilbert space \eqref{disban} reads as
\begin{equation*}
\scal{f(x)-f(y)}{x-y}_{\mathcal{K}}\leq 0.
\end{equation*}
\end{prop}

\subsection{The Yosida approximating}\label{Sec:Yosida}
In this section we introduce a useful regularizing sequence for dissipative functions (see \cite[Appendix A]{Cer2001}, \cite[Appendix D]{Dap-Zab2014} and \cite[Section 10.1]{Pes-Zab2007}).

Let ${\mathcal{K}}$ be a separable Banach space and let $F:{\rm Dom}(F)\subseteq{\mathcal{K}}{\rightarrow}{\mathcal{K}}$ be a possibly non linear function. We assume that there exists $\zeta_F\in\R$ such that $F-\zeta_F\Id$ is m-dissipative. By the m-dissipativity, for every $\delta>0$ the map $z\rightarrow  z-\delta(F(y)-\zeta_F y)$ is bijective, so for every  $x\in{\mathcal{K}}$, there exists a unique $J_\delta(x)\in {\rm Dom}(F)$ such that
\begin{align}\label{eq_YO}
J_\delta(x)-\delta (F(J_\delta(x))-\zeta_F J_\delta(x))=x.
\end{align}
We define $F_\delta:{\mathcal{K}}{\rightarrow}{\mathcal{K}}$ as
\begin{align*}
F_\delta(x):=F(J_\delta(x)),\qquad x\in{\mathcal{K}},\ \delta>0.
\end{align*}

The following lemma is only a slight modification of the propositions in \cite[Appendix D]{Dap-Zab2014}, so we omit the proof since the computations are standard.

\begin{lemm}\label{Lemma_YO}
The following statements hold true.
\begin{align*}
\lim_{\delta{\rightarrow} 0}\norm{J_{\delta}(x)-x}_{\mathcal{K}}=0,\quad x\in {\rm Dom}(F).
\end{align*}
For every $0<\delta<|\zeta_F|^{-1}$ ($0<\delta<\infty$ if $\zeta_F=0$), the function $F_\delta-\zeta_F\Id_{{\mathcal{K}}}$ is dissipative on ${\mathcal{K}}$. Moreover for every $0<\delta<|\zeta_F|^{-1}$ it holds
\begin{align}
\norm{J_\delta(x)-x}_{\mathcal{K}}&\leq \delta\left(\|F(x)\|_K+\max\{0,\zeta_F\}\| x\|_{\mathcal{K}}\right),\qquad x\in {\rm Dom}(F);\label{cji}\\
\|F_{\delta}(x)\|_{\mathcal{K}}&\leq (1+\delta \max\{0,\zeta_F\})(\|F(x)\|_{\mathcal{K}}+2 \max\{0,\zeta_F\}\norm{x}_{\mathcal{K}}),\qquad x\in {\rm Dom}(F);\label{vy1}\\
\norm{F_\delta(x)-F_\delta(y)}_K&\leq \left(2\delta^{-1}+ \max\{0,\zeta_F\}\right)\norm{x-y}_K,\qquad x,y\in{\mathcal{K}}.\label{lipdeltaX}\\
\norm{J_\delta(x)-J_\delta(y)}_{\mathcal{K}} &\leq \norm{x-y}_K,\qquad x,y\in{\mathcal{K}}\label{lipJ}.
\end{align}
For every $x,y\in \mathcal{K}$ and $0<\delta,\tau<|\zeta_F|^{-1}$ we have
\begin{equation}\label{superyE}
\norm{J_\delta(x)-J_\tau(y)}_{\mathcal{K}}\leq \zeta_F\norm{x-y}_{\mathcal{K}}+ (\delta+\tau)(\norm{F(y)}_{\mathcal{K}}+\norm{F(x)}_{\mathcal{K}}+|\zeta_F|\norm{y}_{\mathcal{K}}+|\zeta_F|\norm{x}_{\mathcal{K}}).
\end{equation} 
If ${\mathcal{K}}$ is a Hilbert space, then for every $x,y\in {\rm Dom}(F)$ there exists $B:=B(\zeta_F)$ such that for every $\delta,\tau>0$ we have
\begin{equation}\label{supery}
\scal{F_{\delta}(x)-F_\tau(y)}{x-y}_{\mathcal{K}}\leq \zeta_F\norm{x-y}_{\mathcal{K}}^2+(\delta+\tau)B(\norm{F(x)}_{\mathcal{K}}^2+\norm{F(y)}_{\mathcal{K}}^2+\norm{x}_{\mathcal{K}}^2+\norm{y}_{\mathcal{K}}^2)
\end{equation}

\end{lemm}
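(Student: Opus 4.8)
The plan is to reduce every assertion to the standard calculus of the Yosida approximation, applied to the shifted map $G:=F-\zeta_F\Id$, which is $m$-dissipative by hypothesis. The basic objects are the resolvent $J_\delta:=(\Id-\delta G)^{-1}$ and the approximation $G_\delta:=G\circ J_\delta$. By $m$-dissipativity of $G$ (in the form \eqref{disban1}), $J_\delta$ is well defined on all of ${\mathcal{K}}$ for every $\delta>0$ and takes values in ${\rm Dom}(F)$; moreover \eqref{lipJ} is precisely the contraction property of $J_\delta$, since writing $u=J_\delta(x)$, $v=J_\delta(y)$ one has $x-y=(u-v)-\delta(G(u)-G(v))$, whence $\norm{x-y}_{\mathcal{K}}\geq\norm{u-v}_{\mathcal{K}}$ by \eqref{disban1}. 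Everything then rests on two elementary identities that follow at once from \eqref{eq_YO}: $J_\delta(x)-x=\delta\,G_\delta(x)$, where $G_\delta(x)=G(J_\delta(x))=\delta^{-1}(J_\delta(x)-x)$, and the splitting $F_\delta(x)=G_\delta(x)+\zeta_F J_\delta(x)$.

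Next I would dispatch the one-parameter estimates. For $x\in{\rm Dom}(F)$ one has $J_\delta\big(x-\delta G(x)\big)=x$, so the contraction property gives $\norm{G_\delta(x)}_{\mathcal{K}}=\delta^{-1}\norm{J_\delta(x)-J_\delta(x-\delta G(x))}_{\mathcal{K}}\leq\norm{G(x)}_{\mathcal{K}}\leq\norm{F(x)}_{\mathcal{K}}+\abs{\zeta_F}\norm{x}_{\mathcal{K}}$; multiplying by $\delta$ yields \eqref{cji}, and $\norm{J_\delta(x)-x}_{\mathcal{K}}=\delta\norm{G_\delta(x)}_{\mathcal{K}}\to 0$ as $\delta\to0$. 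The bound $\norm{G_\delta(x)-G_\delta(y)}_{\mathcal{K}}\leq 2\delta^{-1}\norm{x-y}_{\mathcal{K}}$ is the triangle inequality on $\delta^{-1}((J_\delta(x)-J_\delta(y))-(x-y))$ combined with \eqref{lipJ}; feeding this and $\norm{J_\delta(x)}_{\mathcal{K}}\leq\norm{x}_{\mathcal{K}}+\delta(\norm{F(x)}_{\mathcal{K}}+\abs{\zeta_F}\norm{x}_{\mathcal{K}})$ into the splitting $F_\delta=G_\delta+\zeta_F J_\delta$ and collecting terms gives \eqref{lipdeltaX} and \eqref{vy1}. For the dissipativity of $F_\delta-\zeta_F\Id$, I would observe that the splitting together with $J_\delta(x)-x=\delta G_\delta(x)$ gives $F_\delta(x)-\zeta_F x=(1+\delta\zeta_F)G_\delta(x)$; for $0<\delta<\abs{\zeta_F}^{-1}$ the scalar $1+\delta\zeta_F$ is positive, so it suffices that $G_\delta$ itself be dissipative, and this follows from the characterization preceding the lemma: for $x\neq y$ choose $z^*\in\partial\norm{x-y}_{\mathcal{K}}$ (so $\dscal{x-y}{z^*}{\mathcal{K}}=\norm{x-y}_{\mathcal{K}}$ and $\norm{z^*}_{{\mathcal{K}}^*}=1$), then $\dscal{G_\delta(x)-G_\delta(y)}{z^*}{\mathcal{K}}=\delta^{-1}\big(\dscal{J_\delta(x)-J_\delta(y)}{z^*}{\mathcal{K}}-\norm{x-y}_{\mathcal{K}}\big)\leq 0$ by \eqref{lipJ}.

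There remain the two-parameter estimates (with $x,y\in{\rm Dom}(F)$ so that the right-hand sides make sense, $G_\tau$ being defined from $\tau$ just as $G_\delta$ from $\delta$). For \eqref{superyE} I would simply write $J_\delta(x)-J_\tau(y)=(x-y)+\delta G_\delta(x)-\tau G_\tau(y)$ and bound the two Yosida terms by \eqref{cji}. For the Hilbert-space estimate \eqref{supery} I would expand $\scal{F_\delta(x)-F_\tau(y)}{x-y}_{\mathcal{K}}$ via the splitting and then replace $x$ by $J_\delta(x)-\delta G_\delta(x)$ and $y$ by $J_\tau(y)-\tau G_\tau(y)$ in the second slot: the resulting diagonal term $\scal{G(J_\delta(x))-G(J_\tau(y))}{J_\delta(x)-J_\tau(y)}_{\mathcal{K}}$ is $\leq 0$ by dissipativity of $G$ in Hilbert form (the inner-product version of \eqref{disban}), the $\zeta_F$-term $\zeta_F\scal{J_\delta(x)-J_\tau(y)}{x-y}_{\mathcal{K}}$ contributes $\zeta_F\norm{x-y}_{\mathcal{K}}^2$ after one more use of $J_\delta(x)-J_\tau(y)=(x-y)+\delta G_\delta(x)-\tau G_\tau(y)$, and every remaining cross term is controlled by Cauchy--Schwarz followed by $2ab\leq a^2+b^2$, using $\norm{G_\delta(x)}_{\mathcal{K}}\leq\norm{F(x)}_{\mathcal{K}}+\abs{\zeta_F}\norm{x}_{\mathcal{K}}$ and the analogue for $y$. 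Collecting everything produces a constant $B=B(\zeta_F)$ and the factor $\delta+\tau$ in front of $\norm{F(x)}_{\mathcal{K}}^2+\norm{F(y)}_{\mathcal{K}}^2+\norm{x}_{\mathcal{K}}^2+\norm{y}_{\mathcal{K}}^2$.

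The computations are all routine; the only point requiring a little care is the final accounting in \eqref{supery}, where one must absorb each cross term into the appropriate quadratic so that the coefficient of $\norm{x-y}_{\mathcal{K}}^2$ comes out exactly $\zeta_F$ while the remainder retains a clean $\delta+\tau$ factor. The rest is the standard resolvent calculus for $m$-dissipative maps, which is why the statement is quoted from \cite{Dap-Zab2014} with the proof omitted.
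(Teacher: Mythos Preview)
Your sketch is correct and is exactly the standard resolvent calculus for $m$-dissipative maps that the paper has in mind; the paper itself omits the proof entirely, citing \cite[Appendix D]{Dap-Zab2014} and declaring the computations routine. Your reduction to $G=F-\zeta_F\Id$, the identities $J_\delta(x)-x=\delta G_\delta(x)$ and $F_\delta-\zeta_F\Id=(1+\delta\zeta_F)G_\delta$, and the handling of the cross terms in \eqref{supery} are precisely the intended arguments; incidentally your computation for \eqref{superyE} gives the coefficient $1$ (not $\zeta_F$) in front of $\norm{x-y}_{\mathcal{K}}$, which is what the estimate should read.
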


\begin{rmk}\label{punto-fisso}
If we assume that there exists $x_0\in{\rm Dom}(F)$ such that 
\[
F(x_0)=\zeta_Fx_0,
\]
then by \eqref{eq_YO}, for every $0<\delta<|\zeta_F|^{-1}$ we have $J_\delta(x_0)=x_0$. Hence by \eqref{lipJ} for every $x\in{\mathcal{K}}$ we get
\begin{equation*}
\norm{J_\delta(x)}_{\mathcal{K}}\leq \norm{J_\delta(x)-J_\delta(x_0)}+\norm{x_0}_{\mathcal{K}}\leq \norm{x-x_0}_K+\norm{x_0}_{\mathcal{K}}\leq \norm{x}_{\mathcal{K}}+2\norm{x_0}_{\mathcal{K}}
\end{equation*}
\end{rmk}
\subsection{Semigroups}\label{SemiSemi} 
In this subsection we recall some basic definitions and results about semigroups theory. We refer to \cite[Chapter II]{Eng-Nag2006} and \cite[Chapter 2]{Lun1995}.

Let ${\mathcal{K}}$ be a separable Banach space. Let $B:{\rm Dom}(B)\subseteq {\mathcal{K}}{\rightarrow}{\mathcal{K}}$. We denote by $\rho(B)$ the resolvent set of $B$ and for $\lambda\in\rho(B)$ we denote by $R(\lambda,B):=(\lambda\Id_{\mathcal{K}}-B)^{-1}$ the resolvent operator of $B$. 

We consider the complexification of ${\mathcal{K}}$, and we still denote it by ${\mathcal{K}}$. Let $B:{\rm Dom}(B)\subseteq{\mathcal{K}}{\rightarrow}{\mathcal{K}}$ be a sectorial operator, namely there exist $M>0$, $\eta_0\in\R$ and $\theta_0\in (\pi/2,\pi]$ such that
\begin{equation*}
S_{0}:=\{\lambda\in\mathbb{C}\; |\; \lambda\neq \eta_0,\; \vert \mbox{arg}(\lambda-\eta_0)\vert<\theta_0 \}\subseteq\rho(B);
\end{equation*}
\begin{equation*}
\norm{R(\lambda,B)}_{\mathcal{L}({\mathcal{K}})}\leq\frac{M}{|\lambda-\eta_0|},\quad \forall\lambda\in S_0.
\end{equation*}  
We denote by $\{e^{tB}\}_{t\geq 0}$ the analytic semigroup defined by $B$ via Dunford integral. We underline that, in general, the definition of analytic semigroup does not require that the domain of $B$ is dense, see for instance \cite[Chapter 2]{Lun1995}. We recall some basic properties.
\begin{enumerate}
\item There exists $M_0>0$ such that for every $t>0$
\begin{equation}\label{anlitic1}
\norm{e^{tB}}_{\mathcal{L}({\mathcal{K}})}\leq M_0e^{\eta_0t}.
\end{equation}
\item Let $f(t)=e^{tB}$, we have 
\begin{equation}\label{anlitic4}
f\in C^{\infty}((0,+\infty),\mathcal{L}({\mathcal{K}})).
\end{equation} 
\end{enumerate}

If $B:{\rm Dom}(B)\subseteq {\mathcal{K}}{\rightarrow}{\mathcal{K}}$ is dissipative then the approximants $\{B_\delta\}_{\delta>0}\subseteq \mathcal{L}(E)$ defined in Proposition \ref{Lemma_YO} are the standard Yosida approximants of $B$. Hence by \cite[Proposition 2.1.2]{Lun1995} and the definition of $\{e^{tB}\}_{t\geq 0}$ via Dunford integral we have
\begin{equation*}
\lim_{\delta{\rightarrow} 0}\sup_{t\in [\epsilon,T]}\norm{e^{tB_\delta}-e^{tB}}_{\mathcal{L}({\mathcal{K}})}=0,\quad 0<\epsilon<T.
\end{equation*}
Since, for every $\delta>0$, $B_\delta$ is bounded and dissipative by the Lumer-Phillips theorem we have
\begin{equation*}
\norm{e^{tB_\delta}}_{\mathcal{L}({\mathcal{K}})}\leq 1,\quad t\geq 0.
\end{equation*}

\section{The deterministic equation with a dissipative drift term}\label{WP-PDE}
In the second part of the appendix, we will study the auxiliary evolution equation associated with \eqref{eqFO}, that we have used in Section \ref{WP-SPDE} to prove the main result of the paper. We aim for a result similar to the one in \cite[Section 7.2.3]{Dap-Zab2014} for the Brownian case. However, in the Lévy case, we encounter less regularity in time. Additionally, we provide some finer estimates.

In all this section we fix $T>0$ and $f:[0,T]{\rightarrow} E$ which is a  càdlàg function.
We consider the following evolution equation 
\begin{gather}\label{eq}
\eqsys{
\dfrac{dy}{dt}(t)=Ay(t)+F(y(t)+f(t^-)), & t>0;\\
y(0)=x\in E
}
\end{gather}
where $f(s^-):=\lim_{h{\rightarrow} s^-}f(h)$
and the coefficients satisfy the following assumptions.
\begin{hyp}\label{EU2}$ $
 
\begin{enumerate}[\rm(i)]
\item Let $H$ be a real separable Hilbert space and $E$ is a real separable Banach space continuously embedded in $H$.

\item\label{EU2.6} $F:E\subseteq H\rightarrow H$ is a measurable function  and it maps bounded sets of $E$ in bounded sets of ${H}$.

\item\label{EU2.5} $A$ is the infinitesimal generator of a strongly continuous and analytic semigroup $\{e^{tA}\}_{t\geq 0}$ on $H$. The part $A_E$ of $A$ in $E$ generates an analytic semigroup.

\item\label{EU2.4} There exist $\zeta_{A}, \zeta_F\in\R$ such that 
\begin{enumerate}
\item $A-\zeta_{A}\Id$ is dissipative in ${H}$ and $A_E-\zeta_{A}\Id$ is dissipative in $E$;
\item $F-\zeta_F\Id$ is $m$-dissipative in ${H}$ and $F_{E}-\zeta_F\Id$ is $m$-dissipative in $E$.
\end{enumerate}
We set 
\begin{equation}\label{zeta}
\zeta:=\zeta_{A}+\zeta_F.
\end{equation}

\item
The mapping $F \circ f: [0, T] \to E$ is well-defined and integrable, i.e.,
\[
\int^T_0\norm{F(f(s))}_Eds<+\infty.
\]
\end{enumerate}
\end{hyp}

By leveraging Hypotheses \ref{EU2}, we approximate problem \eqref{eq} using Yosida approximants of $A$ and $F$. Nevertheless, these assumptions alone are insufficient to establish that the limit of the solutions to the approximated problems converges to the a mild solution to \eqref{eq}. To achieve this, we must assume at least one of the following additional hypotheses.

\begin{hyp}\label{Hyp2}
Assume that Hypotheses \ref{EU2} holds true, $F(E)\subseteq E$ and $F_E=F_{|E}:E{\rightarrow} E$ is locally Lipschitz, namely it is Lipschitz continuous on the bounded sets of $E$.
\end{hyp}

\begin{hyp}\label{Hyp1}
Assume that Hypotheses \ref{EU2} holds true, $E$ is reflexive and there exists $x_0\in E$ such that $F(x_0)=\zeta_Fx_0$. Moreover the following implication holds true:
If $\{x_n\}_{n\in\N}\subseteq E$ is uniformly bounded in $E$ and converges strongly in ${H}$ to $x\in E$, then for every $h\in E$ we have
\begin{equation}\label{convergenza-debole}
|\scal{F(x_n)-F(x)}{h}_{H}|\rightarrow 0.
\end{equation}
\end{hyp}

In this section, under Hypotheses \ref{Hyp2} or \ref{Hyp1}, we will prove that for every $x\in E$ the PDE \eqref{eq} has unique mild solution $y(\cdot,x)\in C([0,T];{H})\cap C((0,T];E)$ in the following sense.
\begin{defi}
For every  $x\in E$ we call mild solution of \eqref{eq} a function $y(\cdot,x)$ such that, for every $t\in [0,T]$, we have

\begin{align}\label{Mild-det}
 y(t,x)=e^{tA}x+\int_0^te^{(t-s)A}F(y(s,x)+f(s))ds.
\end{align}
\end{defi}

We will prove the well-posedness of \eqref{eq} in Subsection \ref{Buona-positura}, however, to do this we need two technical lemmas that we will state and prove in the next Subsection \ref{Lemmi-tecnici}.

\subsection{Technical lemmas}\label{Lemmi-tecnici}

We still denote by $A$ and $F$ the part of $A$ and $F$ in $E$, respectively. Let $\{F_\delta \}_{\delta\geq 0}$ and $(A-\zeta_A\Id)_{\theta\geq 0}$ be the The Yosida approximants of $F$ and $(A-\zeta_A\Id)$, respectively. By Hypotheses \ref{EU2}, for every small $\theta,\delta\geq 0$, the functions $F_\delta$ and $(A-\zeta_A\Id)_\theta$ verify the statements of  Proposition \ref{Lemma_YO} both with ${\mathcal{K}}={H}$ and ${\mathcal{K}}=E$.

For every small $\theta,\delta\geq 0$ we introduce the approximate problem
\begin{gather}\label{eqdt}
\eqsys{
\dfrac{dy_{\delta,\theta}}{dt}(t)=\left[(A-\zeta_A\Id)_\theta+\zeta_A\Id\right]y_{\delta,\theta}(t)+F_{\delta}(y_{\delta,\theta}(t)+f(t^-)), & t\in [0,T];\\
y_{\delta,\theta}(0)=x\in E.
}
\end{gather}
If $f$ is a continuous function then it is possible to avoid the next proposition using \cite[Proposition 4.1.8]{Lun1995}.

\begin{prop}\label{soldt}
Assume that Hypotheses \ref{EU2} hold true. For every small $\theta,\delta\geq 0$ and\\ $x\in E$, the PDE \eqref{eqdt} has a unique mild solution $y_{\delta,\theta}(\cdot,x)\in C([0,T],E)$ and it is \\left-differentiable. Moreover for every small enough $\tau,\delta, \theta>0$, for every $x,z\in E$ and \\$t\in [0,T]$ we have
\begin{align}
&|y_{\delta,\theta}(t,x)|_{H}\leq e^{\zeta t}|x|_{H}+(1+\delta\max\{0,\zeta_F\})\int_0^te^{\zeta (t-s)}\left(\abs{F(f(s))}_{H}+2\max\{0,\zeta_F\}\abs{f(s)}_{H}\right)ds,\label{stidXth}\\
&\norm{y_{\delta,\theta}(t,x)}_E\leq e^{\zeta t}\norm{x}_E+(1+\delta\max\{0,\zeta_F\})\int_0^te^{\zeta (t-s)}\left(\norm{F(f(s))}_E+2\max\{0,\zeta_F\}\norm{f(s)}_E\right)ds,\label{stidEth}	\\
&\abs{y_{\delta,\theta}(t,x)-y_{\delta,\theta}(t,z)}_{H}\leq  e^{\zeta t}\abs{x-z}_{H}\label{lipdXth}\\
&\norm{y_{\delta,\theta}(t,x)-y_{\delta,\theta}(t,z)}_E\leq  e^{\zeta t}\norm{x-z}_E\label{lipdEth},\\
&\abs{y_{\delta,\theta}(t,x)-y_{\tau,\theta}(t,x)}_{H}^2\leq C(\delta+\tau)\label{lipdelta}
\end{align}
where $\zeta$ is the constant defined in \eqref{zeta} and $C:=C(A,\zeta,x,F,T)$ is a positive constant.
\end{prop}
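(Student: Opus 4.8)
The plan is to treat the approximate problem \eqref{eqdt} as a genuine ODE in the Banach space $E$ (equivalently in $H$), since once $\theta,\delta$ are fixed the operator $(A-\zeta_A\Id)_\theta + \zeta_A\Id$ is \emph{bounded} (it lies in $\mathcal{L}(E)\cap\mathcal{L}(H)$ by the Yosida construction) and $F_\delta$ is globally Lipschitz (by \eqref{lipdeltaX}), while $t\mapsto f(t^-)$ is càglàd, hence bounded on $[0,T]$ and Borel measurable with at most countably many jumps. First I would establish existence and uniqueness: rewrite \eqref{eqdt} in integral (mild = strong, since the generator is bounded) form and run a contraction argument on $C([0,T];E)$, or Picard iteration; because all the coefficients are Lipschitz and $\int_0^T\|F(f(s^-))\|_E\,ds<\infty$ by Hypotheses \ref{EU2}(v), the iteration converges uniformly on $[0,T]$. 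Left-differentiability of $y_{\delta,\theta}(\cdot,x)$ follows because the right-hand side of \eqref{eqdt} is càglàd in $t$ (it is continuous in the $y$-variable and only the $f(t^-)$ term carries the jumps, and that term is left-continuous), so the integrand in the variation-of-constants representation has left limits everywhere and the fundamental theorem of calculus gives $\frac{d^-}{dt}y_{\delta,\theta}(t,x)=[(A-\zeta_A\Id)_\theta+\zeta_A\Id]y_{\delta,\theta}(t,x)+F_\delta(y_{\delta,\theta}(t,x)+f(t^-))$ pointwise.

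Next I would derive the four dissipative estimates \eqref{stidXth}--\eqref{lipdEth}. The mechanism is the same in $H$ and in $E$: apply the left-derivative-of-the-norm formula \eqref{Ldiff} to $\gamma(t):=\|y_{\delta,\theta}(t,x)\|$, pick $x^*\in\partial\|y_{\delta,\theta}(t,x)\|$ realizing the minimum, and pair it with the equation. For \eqref{stidXth}/\eqref{stidEth}, split $F_\delta(y+f(t^-)) = [F_\delta(y+f(t^-)) - F_\delta(f(t^-))] + F_\delta(f(t^-))$: the dissipativity of $(A-\zeta_A\Id)_\theta$ and of $F_\delta - \zeta_F\Id$ (from Lemma \ref{Lemma_YO}) kills the first bracket up to the linear term $\zeta\gamma(t)$, and the second term is bounded using \eqref{vy1}, giving $\frac{d^-\gamma}{dt}(t)\le\zeta\gamma(t) + (1+\delta|\zeta_F|)(\|F(f(t^-))\|+2|\zeta_F|\|f(t^-)\|)$; then \eqref{varofcost} yields the stated bound. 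For the Lipschitz estimates \eqref{lipdXth}/\eqref{lipdEth}, set $\gamma(t):=\|y_{\delta,\theta}(t,x)-y_{\delta,\theta}(t,z)\|$, subtract the two equations, use dissipativity of both $(A-\zeta_A\Id)_\theta$ and $F_\delta-\zeta_F\Id$ to get $\frac{d^-\gamma}{dt}(t)\le\zeta\gamma(t)$, and apply \eqref{varofcost} with $g\equiv0$.

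Finally, for \eqref{lipdelta} I would work in the Hilbert space $H$, where the inner-product structure is available and one can use the refined Yosida estimate \eqref{supery}. Put $\psi(t):=|y_{\delta,\theta}(t,x)-y_{\tau,\theta}(t,x)|_H^2$; differentiating (both functions are left-differentiable) and pairing with $y_{\delta,\theta}-y_{\tau,\theta}$, the $(A-\zeta_A\Id)_\theta$-term contributes $\le 2\zeta_A\cdot\frac{1}{2}\psi$ (same $\theta$ on both sides, so only the linear shift survives) and the nonlinear term, via \eqref{supery} applied to $x$-arguments $y_{\delta,\theta}(t,x)+f(t^-)$ and $y_{\tau,\theta}(t,x)+f(t^-)$, contributes $\le 2\zeta_F\cdot\frac12\psi + (\delta+\tau)B(\cdots)$, where the $(\cdots)$ involves $|F(y_{\delta,\theta}+f(t^-))|_H^2$, $|F(y_{\tau,\theta}+f(t^-))|_H^2$ and the squared norms of the arguments. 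These are controlled uniformly in $t\in[0,T]$ and in small $\delta,\tau$ by the already-established bound \eqref{stidXth} together with Hypotheses \ref{Hyp} (iv) (boundedness of $F$ on bounded sets) and the boundedness of $f(t^-)$ on $[0,T]$; call the resulting supremum $M=M(A,\zeta,x,F,T)$. One then has $\frac{d^-\psi}{dt}(t)\le 2\zeta\,\psi(t) + (\delta+\tau)\,2BM$, and \eqref{varofcost} (applied to $\psi$, with $b=2\zeta$, $g\equiv(\delta+\tau)2BM$, starting value $\psi(0)=0$) gives $\psi(t)\le C(\delta+\tau)$ with $C=\frac{2BM}{2\zeta}(e^{2\zeta T}-1)$ (interpreted as $2BMT$ if $\zeta=0$), which is exactly \eqref{lipdelta}.

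The main obstacle I anticipate is bookkeeping rather than conceptual: making sure the "for every small $\theta,\delta,\tau$" quantifiers are handled so that the constant $C$ in \eqref{lipdelta} is genuinely independent of $\delta,\tau$ — this requires that the a priori bound on $y_{\delta,\theta}(t,x)$ coming from \eqref{stidXth} be uniform in $\delta$ (which it is, since $1+\delta|\zeta_F|$ is bounded for small $\delta$) and hence that $M$ above can be chosen once and for all. A secondary point of care is the justification that the chain rule / left-derivative-of-a-squared-norm manipulation is legitimate at the jump times of $f$; this is fine because $t\mapsto f(t^-)$ is left-continuous, so the right-hand side of \eqref{eqdt} is left-continuous in $t$ and \eqref{Ldiff}, \eqref{varofcost} apply verbatim on $[0,T]$.
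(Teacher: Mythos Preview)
Your proposal is correct and follows essentially the same route as the paper: contraction in $C([0,T];E)$ for existence/uniqueness, direct verification of left-differentiability from the variation-of-constants formula, the subdifferential inequality \eqref{Ldiff} plus dissipativity of $(A-\zeta_A\Id)_\theta$ and $F_\delta-\zeta_F\Id$ for \eqref{stidXth}--\eqref{lipdEth}, and the Hilbert-space estimate \eqref{supery} together with \eqref{varofcost} for \eqref{lipdelta}. One small correction: in the final step you must invoke the $E$-bound \eqref{stidEth} rather than the $H$-bound \eqref{stidXth} to control $|F(y_{\delta,\theta}(t,x)+f(t^-))|_H$, since the hypothesis only guarantees that $F$ maps bounded sets of $E$ into bounded sets of $H$; the paper does exactly this.
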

\begin{proof}
We fix small $\theta,\delta>0$ and $x\in E$. In this proof we set by 
\[
A'_\theta:=\left[(A-\zeta_A\Id)_\theta+\zeta_A\Id\right].
\]
We stress that $A'_\theta\in\mathcal{L}(E)$. We consider the operator $V:C([0,T],E){\rightarrow} C([0,T],E)$ defined by
\[
V(y)(t):=e^{tA'_\theta}x+\int^t_0e^{(t-s)A'_\theta}F_{\delta}(y(s)+f(s))ds,\quad y\in C([0,T],E),\; t\in [0,T].
\]
Since $f$ is càdlàg then $s\rightarrow f(s^-)$ is càglàg so the function $s\rightarrow F_\delta(y(s)+f(s^-))$ is integrable in $[0,t]$.
$A'_\theta\in\mathcal{L}(E)$, so by \eqref{lipdeltaX} we have $V\left(C([0,T],E)\right)\subseteq C([0,T],E)$, and for every $y,z\in C([0,T],E)$ we have
\[
\norm{V(y)-V(z)}_{C([0,T],E)}\leq \left(\frac{2}{\delta}+\max\{0,\zeta_F\}\right)\left(\int^T_0\norm{e^{(t-s)A'_\theta}}_{\mathcal{L}(E)}ds\right)\norm{y-z}_{C([0,T],E)}.
\]
Hence for $T_0>0$ small enough, by the contraction mapping theorem the problem  \eqref{eqdt} has a unique mild solution $y_{\delta,\theta,T_0}(\cdot,x)\in C([0,T_0],E)$. We have proven that \eqref{eqdt} has unique mild solution in every interval $[0,T_0]$, $[T_0,2T_0]$,... , so, attaching the solutions in these intervals, we obtain a mild solution $y_{\delta,\theta,T}(\cdot,x)$ of \eqref{eqdt} in $[0,T]$. The uniqueness of $y_{\delta,\theta}(\cdot,x)$ follow immediately by the Lipschitzianity of $F_{\delta}$. 

Now we prove that $y_{\delta,\theta}(\cdot,x)$ is left-differentiable, namely
\begin{equation}\label{S0.1}
\lim_{h\rightarrow 0^+}\norm{\dfrac{y_{\delta,\theta}(t,x)-y_{\delta,\theta}(t-h,x)}{h}-A'_\theta y_{\delta,\theta}-F_{\delta}(y_{\delta,\theta}(t,x)+f(t^-))}_E=0.
\end{equation}
For every $t\in [0,T]$, $0<h<t$ and $x\in E$ we have
{\small
\begin{align}
\dfrac{y_{\delta,\theta}(t,x)-y_{\delta,\theta}(t-h,x)}{h}&=\frac{1}{h}\left(e^{tA'_\theta}x-e^{(t-h) A'_\theta}x\right)+\frac{1}{h}\int^t_0e^{(t-s)A'_\theta}F_{\delta}(y_{\delta,\theta}(s,x)+f(s))ds\notag\\
&-\frac{1}{h}\int^{t-h}_0e^{(t-h-s)A'_\theta}F_{\delta}(y_{\delta,\theta}(s,x)+f(s))ds\notag\\
&=\frac{1}{h}\left(e^{tA'_\theta}x-e^{(t-h) A'_\theta}x\right)+\frac{1}{h}\int^{t}_{t-h}e^{(t-s)A'_\theta}F_{\delta}(y_{\delta,\theta}(s,x)+f(s))ds\notag\\
&+\frac{1}{h}\int^{t-h}_0\left(e^{tA'_\theta}x-e^{(t-h) A'_\theta}x\right)e^{-sA'_\theta}F_{\delta}(y_{\delta,\theta}(s,x)+f(s))ds.\notag
\end{align}
}
Since $A'_\theta\in\mathcal{L}(E)$ then by standard arguments
\begin{align}
&\lim_{h{\rightarrow} 0^+}\norm{\left(\dfrac{e^{tA'_\theta}-e^{(t-h) A'_\theta}}{h}\right)\left(x+\int^{t-h}_0e^{-sA'_\theta}F_{\delta}(y_{\delta,\theta}(s,x)+f(s^-))ds\right)-A'_\theta y_{\delta,\theta}}_E=0.\label{S1}
\end{align}
Now we prove that 
\begin{equation}\label{S2}
\lim_{h{\rightarrow} 0^+}\norm{\frac{1}{h}\int^{t}_{t-h}e^{(t-s)A'_\theta}F_{\delta}(y_{\delta,\theta}(s,x)+f(s))ds-F_{\delta}(y_{\delta,\theta}(t,x)+f(t^-))}_E=0.
\end{equation}
Since $F_\delta$ is Lipschitz and $s\rightarrow f(s^-)$ is càglàg, then for every $\epsilon>0$ there exists $\delta>0$ such that for every $s\in (t-\delta,t)$ we have
\[
\norm{F_{\delta}(y_{\delta,\theta}(s,x)+f(s^-))ds-F_{\delta}(y_{\delta,\theta}(t,x)+f(t^-))}_E\leq \epsilon,
\] 
so for every $0<h<\delta$
\begin{align*}
&\norm{\frac{1}{h}\int^{t}_{t-h}e^{(t-s)A'_\theta}F_{\delta}(y_{\delta,\theta}(s,x)+f(s))ds-F_{\delta}(y_{\delta,\theta}(t,x)+f(t^-))}_E\\
&\leq \frac{1}{h}\int^{t}_{t-h}\norm{e^{(t-s)A'_\theta}F_{\delta}(y_{\delta,\theta}(s,x)+f(s))-F_{\delta}(y_{\delta,\theta}(t,x)+f(t^-))}_Eds \leq \epsilon,
\end{align*}
and \eqref{S2} holds true. Combining \eqref{S1} and \eqref{S2} we obtain \ref{S0.1} so for every $t\in [0,T]$ and $x\in E$ we have
\[
\dfrac{d^-y_{\delta,\theta}}{dt}(t,x)=A'_\theta y_{\delta,\theta}(s,x)+F_{\delta}(y_{\delta,\theta}(t,x)+f(t^-)),
\]

We prove \eqref{stidEth}. By \eqref{Ldiff}, Proposition \ref{Lemma_YO} and Hypotheses \ref{EU2}\eqref{EU2.4} there exists $y^*\in\partial \norm{y_{\delta,\theta}(t,x)}_E$ such that
\begin{align*}
\dfrac{d^-\norm{y_{\delta,\theta}(t,x)}_E}{dt}&\leq\dscal{\left[(A-\zeta_A\Id)_\theta+\zeta_A\Id\right] y_{\delta,\theta}(t,x)}{y^*}{E}+\dscal{F_{\delta}(y_{\delta,\theta}(t,x)+f(t^-))}{y^*}{E}\\
&=\dscal{\left[(A-\zeta_A\Id)_\theta+\zeta_A\Id\right] y_{\delta,\theta}(t,x)}{y^*}{E}\\
&+\dscal{F_{\delta}(y_{\delta,\theta}(t,x)+f(t^-))-F_{\delta}(f(t^-))}{y^*}{E}+\dscal{F_{\delta}(f(t^-))}{y^*}{E}\\
&\leq \zeta\norm{y_{\delta,\theta}(t,x)}_E+\norm{F_{\delta}(f(t^-))}_E,
\end{align*}
where $\zeta$ is the constant defined in \eqref{zeta}. By \eqref{varofcost} we get
\[
\norm{y_{\delta,\theta}(t,x)}_E\leq e^{\zeta t}\norm{x}_E+\int_0^te^{\zeta (t-s)}\norm{F_\delta(f(s))}_E ds,
\] 
finally \eqref{vy1} yields \eqref{stidXth}.

We prove \eqref{lipdEth}. By \eqref{Ldiff}, Proposition \ref{Lemma_YO} and Hypotheses \ref{EU2}\eqref{EU2.4} there exists $y^*\in\partial \norm{y_{\delta,\theta}(t,x)}_E$ such that
\begin{align*}
\dfrac{d^-\norm{y_{\delta,\theta}(t,x)-y_{\delta,\theta}(t,z)}_E}{dt}&\leq\dscal{\left[(A-\zeta_A\Id)_\theta+\zeta_A\Id\right] (y_{\delta,\theta}(t,x)-y_{\delta,\theta}(t,z))}{y^*}{E}\\
&+\dscal{F_{\delta}(y_{\delta,\theta}(t,x)+f(t^-))-F_{\delta}(y_{\delta,\theta}(t,z)+f(t^-))}{y^*}{E}\\
&\leq \zeta\norm{y_{\delta,\theta}(t,x)-y_{\delta,\theta}(t,z)}_E
\end{align*}
where $\zeta$ is the constant defined in \eqref{zeta}, so by \eqref{varofcost} we obtain \eqref{lipdEth}. 

Recalling that $(A-\zeta_A\Id)_\theta$ and $F_\delta-\zeta_F\Id$ are also dissipative in ${H}$, using the inner product of ${H}$ instead of the duality product of $E$, by the same procedure used above we obtain \eqref{stidXth} and \eqref{lipdXth}.

We prove \eqref{lipdelta}. By \eqref{Ldiff}, Proposition \ref{Lemma_YO} and \ref{EU2}\eqref{EU2.4}, for every $t\in [0,T]$ we have
{\small
\begin{align*}
\frac{1}{2}\dfrac{d\abs{y_{\delta,\theta}(t,x)-y_{\tau,\theta}(t,x)}_{H}^2}{dt}&\leq\scal{\left[(A-\zeta_A\Id)_\theta+\zeta_A\Id\right] (y_{\delta,\theta}(t,x)-y_{\tau,\theta}(t,x))}{y_{\delta,\theta}(t,x)-y_{\tau,\theta}(t,x)}_{H}\\
&+\scal{F_{\delta}(y_{\delta,\theta}(t,x)+f(t^-))-F_{\tau}(y_{\tau,\theta}(t,x)+f(t^-))}{y_{\delta,\theta}(t,x)-y_{\tau,\theta}(t,x)}_{H}\\
&\leq \zeta\abs{y_{\delta,\theta}(t,x)-y_{\tau,\theta}(t,x)}_{H}^2\\
&+(\delta+\tau)B\left(\abs{F(y_{\delta,\theta}(t,x)+f(t^-))}_{H}^2+\abs{F(y_{\tau,\theta}(t,x)+f(t^-))}_{H}^2\right)\\
&+(\delta+\tau)B\left(\abs{y_{\delta,\theta}(t,x)+f(t^-)}_{H}^2+\abs{y_{\tau,\theta}(t,x)+f(t^-)}_{H}^2\right)
\end{align*}
}
where $B$ is the constant in \eqref{supery}. By \eqref{varofcost} we obtain
{\small
\begin{align*} 
\abs{y_{\delta,\theta}(t,x)-y_{\tau,\theta}(t,x)}_{H}^2&\leq (\delta+\tau)B\int^t_0e^{2(t-s)\zeta}\left(\abs{F(y_{\delta,\theta}(s,x)+ f(s))}_{H}^2+\abs{F(y_{\tau,\theta}(s,x)+f(s))}_{H}^2\right)ds\\
&+(\delta+\tau)B\int^t_0e^{2(t-s)\zeta}\left(\abs{y_{\delta,\theta}(s,x)+f(s)}_{H}^2+\abs{y_{\tau,\theta}(s,x)+f(s)}_{H}^2\right)ds,
\end{align*}
}
hence by \eqref{stidEth} and Hypotheses \ref{EU2}(\ref{EU2.6}) we get \eqref{lipdelta}.\end{proof}

For every small $\delta\geq 0$ we study the mild solution of the following PDE
\begin{gather}\label{eqd}
\eqsys{
\dfrac{dy_{\delta}}{dt}(t,x)=Ay_{\delta}(t,x)+F_{\delta}(y_{\delta}(t,x)+f(t^-)), & t>0;\\
y_{\delta}(0,x)=x\in E.
}
\end{gather}

\begin{prop}
Assume that Hypotheses \ref{EU2} hold true. For every small $\delta>0$ and $x\in E$ there exists a function $y_\delta(\cdot,x)\in C([0,T],{H})\cap C_b((0,T],E)$ such that
\begin{equation}\label{convdtX}
\lim_{\theta{\rightarrow} 0}\norm{y_{\delta,\theta}(\cdot,x)-y_{\delta}(\cdot,x)}_{C([0,T],{H})}=0,
\end{equation}
\begin{equation}\label{convdtE}
\lim_{\theta{\rightarrow} 0}\norm{y_{\delta,\theta}(\cdot,x)-y_{\delta}(\cdot,x)}_{C([\epsilon,T],E)}=0,\quad \forall\;\epsilon>0,
\end{equation}
and $y_\delta(\cdot,x)$  is the unique mild solution of \eqref{eqd} in $C([0,T],{H})\cap C((0,T],E)$.

In addition for every small $\delta>0$, for every $x,z\in E$ and $t>0$ we have
{\small
\begin{align}
&\abs{y_{\delta}(t,x)}_{H}\leq e^{\zeta t}\abs{x}_{H}+(1+\delta\max\{0,\zeta_F\})\int_0^te^{\zeta (t-s)}\left(\abs{F(f(s))}_{H}+2\max\{0,\zeta_F\}\abs{f(s)}_{H}\right)ds,\label{stidX}\\
&\norm{y_{\delta}(t,x)}_E\leq e^{\zeta t}\norm{x}_E+(1+\delta\max\{0,\zeta_F\})\int_0^te^{\zeta (t-s)}\left(\norm{F(f(s))}_E+2\max\{0,\zeta_F\}\norm{f(s)}_E\right)ds,\label{stidE}	\\
&\abs{y_\delta(t,x)-y_\delta(t,z)}_{H}\leq  e^{\zeta t}\abs{x-z}_{H}\label{lipdX}\\
&\norm{y_\delta(t,x)-y_\delta(t,z)}_E\leq  e^{\zeta t}\norm{x-z}_E\label{lipdE},\\
&\abs{y_{\delta}(t,x)-y_{\tau}(t,x)}_{H}^2\leq C(\delta+\tau).\label{stimaconv}
\end{align}
}
where $\zeta$ is the constant defined in \eqref{zeta} and $C$ is the constant given by \eqref{lipdelta}
\end{prop}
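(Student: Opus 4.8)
The plan is to obtain $y_\delta(\cdot,x)$ as the limit of the family $\{y_{\delta,\theta}(\cdot,x)\}_{\theta>0}$ produced in Proposition~\ref{soldt}, letting $\theta\to 0$ first in $H$ and then, on intervals bounded away from $0$, in $E$. Throughout, $\delta>0$ is fixed, so by \eqref{lipdeltaX} the map $F_\delta$ is globally Lipschitz both on $H$ (constant $L^H_\delta$) and on $E$ (constant $L^E_\delta$), and, writing $A'_\theta:=(A-\zeta_A\Id)_\theta+\zeta_A\Id$ for the bounded operator appearing in \eqref{eqdt}, the contractivity of the Yosida semigroups of the dissipative operators $(A-\zeta_A\Id)_\theta$ recalled in Subsection~\ref{SemiSemi} gives $\norm{e^{tA'_\theta}}_{\mathcal{L}(H)}\le e^{\abs{\zeta_A}T}$ and $\norm{e^{tA'_\theta}}_{\mathcal{L}(E)}\le e^{\abs{\zeta_A}T}$, uniformly in $\theta$ and $t\in[0,T]$. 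Finally, by \eqref{stidEth} the family $\{y_{\delta,\theta}(\cdot,x)\}_{\theta>0}$ is bounded in $C([0,T],E)$, say by $\Lambda_\delta$, and hence so is $\{F_\delta(y_{\delta,\theta}(\cdot)+f(\cdot^-))\}_{\theta>0}$, since $s\mapsto f(s^-)$ is càglàd, hence bounded in $E$ on $[0,T]$.

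First I would check, exactly as in the proof of Proposition~\ref{soldt} but with $A$ in place of $A'_\theta$, that \eqref{eqd} has a unique mild solution $y_\delta(\cdot,x)\in C([0,T],H)$, via the contraction mapping theorem, using that $F_\delta:H\to H$ is Lipschitz and $\{e^{tA}\}_{t\ge0}$ is strongly continuous on $H$. To prove \eqref{convdtX}, subtract the two mild formulas and estimate, for $t\in[0,T]$,
\[
\abs{y_{\delta,\theta}(t,x)-y_\delta(t,x)}_H\le\abs{(e^{tA'_\theta}-e^{tA})x}_H+b_\theta(t)+e^{\abs{\zeta_A}T}L^H_\delta\int_0^t\abs{y_{\delta,\theta}(s,x)-y_\delta(s,x)}_H\,ds,
\]
where $b_\theta(t):=\abs{\int_0^t(e^{(t-s)A'_\theta}-e^{(t-s)A})F_\delta(y_\delta(s,x)+f(s^-))\,ds}_H$. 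Since $A$ generates a $C_0$-semigroup on $H$, the classical Yosida approximation theorem gives $\sup_{t\in[0,T]}\abs{(e^{tA'_\theta}-e^{tA})v}_H\to0$ as $\theta\to0$ for every $v\in H$; approximating $s\mapsto F_\delta(y_\delta(s,x)+f(s^-))$ in $L^1([0,T],H)$ by simple functions then gives $\sup_{t\in[0,T]}b_\theta(t)\to0$, and Gronwall's lemma yields \eqref{convdtX}. Passing to the limit $\theta\to0$ in the mild identity for $y_{\delta,\theta}$ (dominated convergence in the integral term, with the $\theta$-uniform bound from \eqref{stidXth}) confirms that $y_\delta$ solves the mild form of \eqref{eqd}, and uniqueness in $C([0,T],H)\cap C((0,T],E)$ follows from a further Gronwall estimate in $H$.

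The delicate point, and the one I expect to be the main obstacle, is \eqref{convdtE}: the analytic semigroup on $E$ need not be strongly continuous at $0$, so $e^{tA'_\theta}x$ does not converge to $x$ in $E$ and one has no control of $\norm{y_{\delta,\theta}(s,x)-y_{\delta,\theta'}(s,x)}_E$ for $s$ near $0$. I would instead show that $\{y_{\delta,\theta}(\cdot,x)\}_{\theta>0}$ is Cauchy in $C([\epsilon,T],E)$ for every fixed $\epsilon>0$. Subtracting the mild formulas for $\theta$ and $\theta'$ produces, besides $(e^{tA'_\theta}-e^{tA'_{\theta'}})x$ and a term bounded by $e^{\abs{\zeta_A}T}L^E_\delta\int_0^t\norm{y_{\delta,\theta}(s,x)-y_{\delta,\theta'}(s,x)}_E\,ds$, the term $\int_0^t(e^{(t-s)A'_\theta}-e^{(t-s)A'_{\theta'}})F_\delta(y_{\delta,\theta}(s)+f(s^-))\,ds$. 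Using the operator-norm convergence $\sup_{t\in[\eta,T]}\norm{e^{tA'_\theta}-e^{tA}}_{\mathcal{L}(E)}\to0$ recalled in Subsection~\ref{SemiSemi}, the first term goes to $0$ uniformly on $[\epsilon,T]$, while the third is handled by splitting $\int_0^t=\int_0^{t-\eta}+\int_{t-\eta}^t$: on $[0,t-\eta]$ one uses the operator-norm convergence, on $[t-\eta,t]$ the $E$-bound on $F_\delta(y_{\delta,\theta}(\cdot)+f(\cdot^-))$ together with the length $\eta$, letting first $\eta\to0$ and then $\theta,\theta'\to0$. For the middle term, split the integral as $\int_0^\epsilon+\int_\epsilon^t$, bound the first piece crudely by $2\Lambda_\delta\epsilon$, and apply Gronwall's lemma on $[\epsilon,T]$ to obtain
\[
\limsup_{\theta,\theta'\to0}\ \sup_{t\in[\epsilon,T]}\norm{y_{\delta,\theta}(t,x)-y_{\delta,\theta'}(t,x)}_E\le C_\delta\,\epsilon,
\]
with $C_\delta$ independent of $\epsilon$. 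Since the left-hand side is nonincreasing in $\epsilon$ and the estimate holds for every $\epsilon>0$, letting $\epsilon\to0$ forces it to vanish; this two-scale argument is the crux of the proof. It follows that $y_{\delta,\theta}(\cdot,x)\to y_\delta(\cdot,x)$ in $C([\epsilon,T],E)$ for every $\epsilon>0$, which is \eqref{convdtE} and shows $y_\delta(\cdot,x)\in C((0,T],E)$.

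It then remains to pass to the limit in the estimates: \eqref{stidX} and \eqref{lipdX} follow from \eqref{stidXth} and \eqref{lipdXth} by the convergence in $C([0,T],H)$, while \eqref{stidE} and \eqref{lipdE} follow from \eqref{stidEth} and \eqref{lipdEth} by the convergence in $C([\epsilon,T],E)$ for every $\epsilon>0$; in particular \eqref{stidE} shows $y_\delta(\cdot,x)\in C_b((0,T],E)$. Finally, the constant in \eqref{lipdelta} depends only on the $\theta$-uniform bounds in \eqref{stidEth} and on the constant $B$ of \eqref{supery} (which depends only on $\zeta_F$), hence is independent of $\theta$; letting $\theta\to0$ in \eqref{lipdelta}, using the $H$-convergence of both $y_{\delta,\theta}(\cdot,x)$ and $y_{\tau,\theta}(\cdot,x)$, yields \eqref{stimaconv}.
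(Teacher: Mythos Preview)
Your argument is correct, and the overall structure---construct $y_\delta$ by contraction in $C([0,T],H)$, pass to the limit $\theta\to0$, and inherit the estimates from Proposition~\ref{soldt}---matches the paper. The two proofs differ in how \eqref{convdtE} is obtained. The paper first asserts that the mild solution $y_\delta$ already belongs to $C((0,T],E)$ (using that $e^{tA_E}$ is analytic on $E$ and $F_\delta$ maps $E$ to $E$), and then compares $y_{\delta,\theta}$ directly to $y_\delta$ in $E$: the resulting Gronwall inequality, together with the operator-norm convergence $\sup_{t\in[\epsilon,T]}\norm{e^{t(A-\zeta_A\Id)_\theta}-e^{t(A-\zeta_A\Id)}}_{\mathcal{L}(E)}\to0$, gives \eqref{convdtE} in one step; \eqref{convdtX} is then deduced from \eqref{convdtE} and the continuous embedding $E\hookrightarrow H$. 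You instead prove \eqref{convdtX} independently via the classical Yosida approximation in $H$, and for \eqref{convdtE} you run a Cauchy argument between $y_{\delta,\theta}$ and $y_{\delta,\theta'}$, splitting the Gronwall integral at $\epsilon$ and using the monotonicity-in-$\epsilon$ trick to kill the residual $C_\delta\epsilon$. Your route is a bit longer, but it has the virtue of not requiring the a~priori membership $y_\delta\in C_b((0,T],E)$, which the paper invokes rather tersely and which is needed in their Gronwall step; in your approach this regularity comes out as a consequence of the $E$-convergence.
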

\begin{proof}
Since $F_\delta$ is Lipschitz continuous and $\{e^{tA}\}_{t\geq 0}$ is a strongly continuous semigroup in ${H}$, by the same arguments used in the proof of Proposition \ref{soldt} for every $x\in E$ there exists a unique mild solution $y_\delta(\cdot,x)$ to \eqref{eqd} in $C([0,T],{H})$, namely
\[
y_\delta(t,x)=e^{tA}x+\int^t_0e^{(t-s)A}F_\delta(y_\delta(s,x)+f(s))ds,\quad t>0.
\]
Since $\{e^{tA_E}\}$ is an analytic semigroup in $E$ (but not necessary with dense domain), by \eqref{anlitic4}, $y(\cdot,x)\in C((0,T],E)$. 

\noindent We prove \eqref{convdtE}. Let $0<\epsilon<T$. For every small $\theta>0$ and for every $t\in (0,T]$ we have
\begin{align*}
y_{\delta,\theta}(t,x)-y_{\delta}(t,x)&\leq \left(e^{t(A-\zeta_A\Id)_\theta+\zeta_A\Id}-e^{tA}\right)x\phantom{aaaaaaaaaaa}\\
&+\int^t_0e^{(t-s)\zeta_A}e^{(t-s)(A-\zeta_A\Id)_\theta}F_\delta(y_{\delta,\theta}(s,x)+f(s))ds\\
&-\int^t_0 e^{(t-s)A}F_\delta(y_{\delta}(s,x)+f(s))ds\\
&=e^{\zeta_At}\left(e^{t(A-\zeta_A\Id)_\theta}-e^{t(A-\zeta_A\Id)}\right)x\\
&+\int^t_0e^{(t-s)\zeta_A}\left(e^{(t-s)(A-\zeta_A\Id)_\theta}-e^{(t-s)(A-\zeta_A\Id)}\right)F_\delta(y_{\delta,\theta}(s,x)+f(s))ds\\
&+\int^t_0e^{(t-s)A}\left(F_\delta(y_{\delta,\theta}(s,x)+f(s))-F_\delta(y_{\delta}(s,x)+f(s))\right)ds\\
\end{align*}
hence, recalling that $F_\delta$ is Lipschitz continuous, we have
{\small
\begin{align*}
\norm{y_{\delta,\theta}(t,x)-y_{\delta}(t,x)}_E & \leq  e^{\zeta_At}\norm{e^{t(A-\zeta_A\Id)_\theta}-e^{t(A-\zeta_A\Id)}}_{\mathcal{L}(E)}\norm{x}_E\\
&+\int^t_0 e^{(t-s)\zeta_A} \norm{e^{(t-s)(A-\zeta_A\Id)_\theta}-e^{(t-s)(A-\zeta_A\Id)}}_{\mathcal{L}(E)}\norm{F_\delta(y_{\delta,\theta}(s,x)+f(s))}_E ds\\
&+\int^t_0\norm{e^{(t-s)A}}_{\mathcal{L}(E)}\norm{y_{\delta,\theta}(s,x)-y_{\delta}(s,x))}_Eds.
\end{align*}
}
By \eqref{anlitic1}, \eqref{stidEth} and the Gronwall inequality there exists a constant $C:=C(A,F_\delta,x,\zeta,T)$ such that
\[
\sup_{t\in [\epsilon,T]}\norm{y_{\delta,\theta}(t,x)-y_{\delta}(t,x)}_E\leq C\sup_{t\in [\epsilon,T]}\norm{e^{t(A-\zeta_A\Id)_\theta}-e^{t(A-\zeta_A\Id)}}_{\mathcal{L}(E)},
\]
so \eqref{convdtE} holds true. Since $y_\delta\in C([0,T],H)$ and $E$ is continuously embedded in $H$, then also \eqref{convdtX} holds true. 

Finally letting $\theta{\rightarrow} 0$ in \eqref{stidXth}, \eqref{stidEth}, \eqref{lipdXth}, \eqref{lipdEth} and \eqref{lipdelta} we obtain \eqref{stidX}, \eqref{stidE}, \eqref{lipdX}, \eqref{lipdE} and \eqref{stimaconv}, respectively.\end{proof}

\subsection{Well-posedness of \eqref{eq}}\label{Buona-positura}

Now we can prove that \eqref{eq} has unique mild solution.

\begin{prop}\label{proyos}
Assume that Hypotheses \ref{Hyp2} or Hypotheses \ref{Hyp1}  hold true. For every $x\in E$, there exists a function $y(\cdot,x)\in C([0,T],{H})\cap C_b((0,T],E)$ such that
\begin{equation}\label{convdX}
\lim_{\delta{\rightarrow} 0}\norm{y_{\delta}(\cdot,x)-y(\cdot,x)}_{C([0,T],{H})}=0,\quad \forall\; T>0,\;
\end{equation}
and $y(\cdot,x)$ is the unique mild solution of \eqref{eq}. Moreover for every $x,z\in E$ and $t>0$ we have
{\small
\begin{align}
&\abs{y(t,x)}_{H}\leq e^{\zeta t}\abs{x}_{H}+\int_0^te^{\zeta (t-s)}\left(\abs{F(f(s))}_{H}+2\max\{0,\zeta_F\}\abs{f(s)}_{H}\right)ds,\label{stiX}\\
&\norm{y(t,x)}_E\leq e^{\zeta t}\norm{x}_E+\int_0^te^{\zeta (t-s)}\left(\norm{F(f(s))}_E+2\max\{0,\zeta_F\}\norm{f(s)}_E\right)ds,\label{stiE}	\\
&\abs{y(t,x)-y(t,z)}_{H}\leq  e^{\zeta t}\abs{x-z}_{H}\label{lipX}\\
&\norm{y(t,x)-y(t,z)}_E\leq  e^{\zeta t}\norm{x-z}_E\label{lipE},
\end{align}
}
where $\zeta$ is the constant defined in \eqref{zeta}. In addition if Hypotheses \ref{Hyp2} holds true then for every $x\in E$
\begin{equation}\label{convE}
\lim_{\theta{\rightarrow} 0}\norm{y_{\delta}(\cdot,x)-y(\cdot,x)}_{C([\epsilon,T],E)}=0,\quad \forall\;\epsilon>0,
\end{equation} 
\end{prop}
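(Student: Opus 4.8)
The plan is to obtain $y(\cdot,x)$ as the $\delta\to 0$ limit of the mild solutions $y_\delta(\cdot,x)$ of \eqref{eqd}, for which all the quantitative estimates of the previous proposition are already available. First, \eqref{stimaconv} says exactly that $\{y_\delta(\cdot,x)\}_{\delta>0}$ is Cauchy in $C([0,T],{H})$, so it has a limit $y(\cdot,x)\in C([0,T],{H})$; this is \eqref{convdX}. Letting $\delta\to 0$ in \eqref{stidX} and \eqref{lipdX} (the convergence being uniform in $t$, and $1+\delta\abs{\zeta_F}\to 1$) immediately gives \eqref{stiX} and \eqref{lipX}. For the $E$-bounds I would note that, by \eqref{stidE}, the family $\{y_\delta(t,x)\}_\delta$ is bounded in $E$, uniformly in $t\in[0,T]$; combined with its strong convergence in ${H}$ and the continuous embedding $E\hookrightarrow{H}$, this yields $y(t,x)\in E$, and passing to the $\liminf$ in \eqref{stidE} and \eqref{lipdE} gives \eqref{stiE} and \eqref{lipE} — using the reflexivity of $E$ and the weak lower semicontinuity of $\norm{\cdot}_E$ under Hypotheses \ref{Hyp1}, and the stronger $E$-convergence obtained in the next step under Hypotheses \ref{Hyp2}.

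Under Hypotheses \ref{Hyp2} the passage to the limit is relatively soft. Since $F_E\colon E\to E$ is locally Lipschitz, I would write the difference of the mild equations for $y_\delta$ and $y_\tau$, control the nonlinear term by the Lipschitz constant of $F$ on the fixed $E$-ball containing all the $y_\delta(s,x)+f(s^-)$ and $J_\delta(y_\delta(s,x)+f(s^-))$ — a ball whose radius is bounded via \eqref{stidE} and Proposition \ref{Lemma_YO} — and then a Gronwall argument shows that $\{y_\delta(\cdot,x)\}_\delta$ is Cauchy in $C([\eps,T],E)$ for every $\eps>0$. Its limit is $y(\cdot,x)$, which gives \eqref{convE} and $y(\cdot,x)\in C_b((0,T],E)$; one then passes to the limit in \eqref{Mild-det} using $J_\delta(z_\delta(s))\to z(s):=y(s,x)+f(s^-)$ in $E$ (by \eqref{cji}), so that $F_\delta(z_\delta(s))=F_E(J_\delta(z_\delta(s)))\to F_E(z(s))$, together with dominated convergence.

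The \emph{delicate} case is Hypotheses \ref{Hyp1}, where $F$ maps $E$ only into ${H}$: there is no contraction estimate in $E$ and no reason to expect \eqref{convE}. The crucial point is to retain uniform control in $E$. By \eqref{stidE} the functions $z_\delta(s):=y_\delta(s,x)+f(s^-)$ are bounded in $E$ uniformly in $\delta$ and $s\in[0,T]$, and by Remark \ref{punto-fisso} — this is precisely where the assumption $F(x_0)=\zeta_F x_0$ is used — so are the $J_\delta(z_\delta(s))$; meanwhile \eqref{cji} gives $J_\delta(z_\delta(s))-z_\delta(s)\to 0$ in ${H}$, hence $J_\delta(z_\delta(s))\to z(s):=y(s,x)+f(s^-)$ strongly in ${H}$, for every $s$. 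Since $F$ sends bounded sets of $E$ into bounded sets of ${H}$, the family $F_\delta(z_\delta(s))=F(J_\delta(z_\delta(s)))$ is bounded in ${H}$; then \eqref{convergenza-debole} (with $x_n=J_\delta(z_\delta(s))$) together with the density of $E$ in ${H}$ yields $F_\delta(z_\delta(s))\rightharpoonup F(z(s))$ weakly in ${H}$, for every $s$. Applying the bounded (hence weak-to-weak continuous) operator $e^{(t-s)A}$, and then, for fixed $h\in{H}$, the dominated convergence theorem to $s\mapsto\scal{e^{(t-s)A}F_\delta(z_\delta(s))}{h}_H$ (bounded by a constant uniformly in $\delta$ and $s\in[0,T]$), I obtain $\int_0^t e^{(t-s)A}F_\delta(z_\delta(s))\,ds\rightharpoonup\int_0^t e^{(t-s)A}F(z(s))\,ds$ weakly in ${H}$. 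But this integral equals $y_\delta(t,x)-e^{tA}x$, which converges strongly in ${H}$ to $y(t,x)-e^{tA}x$; identifying the weak and strong limits shows that $y(\cdot,x)$ satisfies \eqref{Mild-det}. That $y(t,x)\in E$ with $\norm{y(t,x)}_E$ bounded comes from the first paragraph, and a further inspection of \eqref{Mild-det}, using $\sup_{s\in[0,T]}\abs{F(z(s))}_{H}<\infty$ and the smoothing of the analytic semigroups, gives $y(\cdot,x)\in C_b((0,T],E)$.

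Finally, for uniqueness (under either hypothesis) I would argue in ${H}$: if $y_1,y_2$ are mild solutions with the same datum, then $w:=y_1-y_2$ is the mild solution of $w'=Aw+G$, $w(0)=0$, with $G\in L^1(0,T;{H})$, $G(s)=F(y_1(s)+f(s^-))-F(y_2(s)+f(s^-))$; the energy inequality for such mild solutions — proved, if one wants full rigour, by replacing $A$ by its Yosida approximants, using that $A-\zeta_A\Id$ is $m$-dissipative in ${H}$, and passing to the limit — gives $\tfrac{d}{dt}\tfrac12\abs{w(t)}_{H}^2\le \zeta_A\abs{w(t)}_{H}^2+\scal{G(t)}{w(t)}_H$ for a.e.\ $t$, while the dissipativity of $F-\zeta_F\Id$ in ${H}$ bounds $\scal{G(t)}{w(t)}_H$ by $\zeta_F\abs{w(t)}_{H}^2$, so Gronwall forces $w\equiv 0$. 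The \emph{main obstacle} is the third paragraph: making the limit passage in the nonlinear term work with only the weak-type hypothesis \eqref{convergenza-debole}, for which the uniform $E$-bound coming from the fixed point $x_0$ and the reflexivity of $E$ are essential; securing strong $E$-continuity of $y$ in that case is the other point requiring care.
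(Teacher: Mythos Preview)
Your proposal is correct and follows essentially the same route as the paper: Cauchy in $C([0,T],H)$ from \eqref{stimaconv}, then the split into the locally Lipschitz case (difference of mild equations, local Lipschitz bound on a fixed $E$-ball, Gronwall in $E$ giving \eqref{convE}) and the reflexive/weak case (uniform $E$-bounds on $J_\delta(z_\delta)$ via Remark~\ref{punto-fisso}, strong $H$-convergence of $J_\delta(z_\delta)$ via \eqref{cji}, weak $H$-convergence of $F_\delta(z_\delta)$ via \eqref{convergenza-debole} and density, then weak passage to the limit in the mild equation by dominated convergence). The only visible difference is in the uniqueness step: the paper, following \cite{Mar-Roc2010}, regularizes by applying $(\lambda\Id-\eps A)^{-1}$ to the difference $u-v$, expands in an orthonormal basis $\{e_k\}\subset E$ of $H$, differentiates the scalar components, sums and lets $\eps\to 0$, whereas you sketch the equivalent energy inequality via Yosida approximation of $A$; both are standard ways to justify the same dissipativity-plus-Gronwall computation for mild solutions.
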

\begin{proof}
By \eqref{stimaconv}, there exists $y(\cdot,x)\in C([0,T],{H})$ such that \eqref{convdtX} holds true. Moreover letting $\delta\rightarrow 0$ in \eqref{stidX} and \eqref{lipdX} we get \eqref{stiX} and \eqref{lipX}.
To prove that $y(\cdot,x)\in C([0,T],{H})$ solves the mild form of \eqref{eq} we have to split the proof in two cases.\\
 
\textbf{Case 1: Hypotheses \ref{Hyp1} hold true.} 
 
Let $T>0$, $x\in E$. By the reflexivity of $E$ and \eqref{stidE}, for every $t\in [0,T]$, there exists a subsequence of $\{y_\delta(t,x)\}_{\delta>0}\subseteq E$ weakly convergent in $E$, so by \eqref{stidE}, \eqref{lipdE} and the lower semicontinuity of the norm \eqref{stiE} and \eqref{lipE} are verified. By \eqref{vy1} with ${\mathcal{K}}={H}$, \eqref{stidE} and Hypotheses \ref{EU2}\eqref{EU2.6}, for every $s\in [0,T]$ and $x\in E$ the sequence $\{F_\delta(y_\delta(s,x)+f(s^-))\}_{\delta\geq 0}$ is uniformly bounded in ${H}$, so it has a subsequence (still denoted by $\{F_\delta(y_\delta(s,x)+f(s^-))\}_{\delta\geq 0}$) weakly convergent to $L\in{H}$, namely for every $h\in{H}$
\begin{equation}\label{convL}
\lim_{\delta\rightarrow 0}\abs{\scal{F_\delta(y_\delta(s,x)+f(s^-))-L}{h}}=0.
\end{equation}
We prove that \eqref{convL} holds true with $L=F(y(s,x)+f(s^-))$.
Fix $s\in [0,T]$ and $x\in E$. For every small $\delta>0$ we have
\begin{align*}
\abs{J_{\delta}(y_\delta(s,x)+f(s^-))-y(s,x)-f(s^-)}_{H}&\leq \abs{J_{\delta}(y_\delta(s,x)+f(s^-))-y_\delta(s,x)-f(s^-)}_{H}\\
&+\abs{y_\delta(s,x)-y(s,x)}_{H}.
\end{align*}
By \eqref{cji} and \eqref{stidE}, we get
\begin{align*}
\abs{J_{\delta}(y_\delta(s,x)+f(s^-))-y(s,x)-f(s^-)}_{H}&\leq \delta(\abs{F(y_\delta(s,x)+f(s^-))}_{H}+\abs{\zeta}_F\abs{y_\delta(s,x)+f(s^-)}_{H})\\
&+\abs{y_\delta(s,x)-y(s,x)}_{H},
\end{align*}
By Hypotheses \ref{EU2}\eqref{EU2.6}, \eqref{stidE}  and \eqref{convdX} we obtain
\begin{align}\label{convJ}
\lim_{\delta{\rightarrow} 0}\abs{J_{\delta}(y_\delta(s,x)+f(s^-))-y(s,x)-f(s^-)}_{H}=0
\end{align}
By Hypotheses \ref{Hyp1} and Remark \ref{punto-fisso}, for every $\delta>0$ we have
\begin{equation}\label{unifE}
\norm{J_{\delta}(y_\delta(s,x)+f(s^-))}_E\leq \norm{y_\delta(s,x)}_E+\norm{f(s^-)}_E+2\norm{x_0}_{E},
\end{equation}
where $x_0\in E$ is given by Hypotheses \ref{Hyp1}. By \eqref{stidE} and \eqref{unifE}, there exists a constant $M>0$ independent on $\delta$ such that
\begin{equation}\label{unifEV}
\norm{J_{\delta}(y_\delta(s,x)+f(s^-))}_E\leq M,
\end{equation} 
so the sequence $\{J_{\delta}(y_\delta(s,x)+f(s^-))\}_{\delta>0}$ is uniformly bounded in $E$ and by \eqref{convJ} converges strongly $H$. Hence by Hypotheses \ref{Hyp1} (see \eqref{convergenza-debole}) for every $h\in E$ we have
\[
\abs{\scal{F(J_{\delta}(y_\delta(s,x)+f(s^-)))-F(y(s,x)+f(s^-))}{h}_{H}}\rightarrow 0,
\]
and since $E$ is dense in $H$ we can conclude that $L=F(y(s,x)+f(s^-))$ in \eqref{convL}, namely
\begin{equation*}
\lim_{\delta\rightarrow 0}\abs{\scal{F_\delta(y_\delta(s,x)+f(s^-))-F(y(s,x)+f(s^-)}{h}}=0,\qquad \forall\,h\in{H}.
\end{equation*}
Moreover by Hypotheses \ref{EU2}\eqref{EU2.6}, \eqref{vy1} (with $\mathcal{K}=H$) and \eqref{unifEV}, there exists $K$ such that for every small $\delta>0$ we have
\begin{equation}\label{unifEH}
\norm{F(J_{\delta}(y_\delta(s,x)+f(s^-)))}_H\leq K.
\end{equation} 
For every $t\in [0,T]$ and $h\in{H}$, we have 
\[
\scal{y_\delta(t,x)}{h}_{H}=\scal{e^{tA}x}{h}_{H}+\int_0^te^{(t-s)A}F_\delta(y_\delta(s,x)+f(s))ds,
\]
letting $\delta\rightarrow 0$ by \eqref{convJ}, \eqref{unifEH} and the dominated convergence theorem we obtain
\[
\scal{y(t,x)}{h}_{H}=\scal{e^{tA}x}{h}_{H}+\int_0^t\scal{e^{(t-s)A}F(y(s,x)+f(s))}{h}_{H} ds,
\]
 so by the arbitrariness of $h\in{H}$ the function $y(\cdot,x)$ solves the mild form of \eqref{eq}.

\textbf{Case 2: Hypotheses \ref{Hyp2} hold true.}  

We begin to prove \eqref{convE}. Let $T>0$, $x\in E$. For every small $\delta,\tau>0$ and $t\in [0,T]$
we have
\begin{align*}
\norm{y_{\delta}(t,x)-y_{\tau}(t,x)}_E&=\norm{\int^t_0e^{(t-s)A}\left(F_\delta(y_{\delta}(s,x)+f(s))-F_\tau(y_{\tau}(s,x)+f(s))\right)ds}_E,
\end{align*}
by \eqref{anlitic1} there exist $\eta_0\in\R$ and $M_0>0$ such that
\begin{align*}
\norm{y_{\delta}(t,x)-y_{\tau}(t,x)}_E\leq M_0\int^t_0e^{\eta_0(t-s)}\norm{F(J_\delta(y_{\delta}(s,x)+f(s)))-F(J_\tau(y_{\tau}(s,x)+f(s)))}_Eds
\end{align*}

By Hypotheses \ref{Hyp2} $F(E)\subseteq E$ and so ${\rm Dom}(F_E)=E$. Hence by \eqref{cji}  and \eqref{stidE} there exists a constant $R:=R(\zeta,T,x,F)>0$ such that, for every small $\delta,\theta>0$ and $t\in [0,T]$ we have
\begin{equation*}
\norm{J_\delta(y_{\delta}(t,x)+f(t^-))}_E+\norm{J_\tau(y_{\tau}(t,x)+f(t^-))}_E\leq R.
\end{equation*}
Let $L_R>0$ be the Lipschitz constant of $F$ on $B_E(0,R)$ (there exists by Hypotheses \ref{Hyp2}). By \eqref{superyE} (with ${\rm Dom}(F_E)=E$) we obtain
{\small
\begin{align*}
\norm{y_{\delta}(t,x)-y_{\tau}(t,x)}_E&
\leq M_0L_R\int^t_0e^{\eta_0(t-s)}\norm{J_\delta(y_{\delta}(s,x)+f(s))-J_\tau(y_{\tau}(s,x)+f(s))}_Eds\\
&\leq M_0L_R\zeta\int^t_0e^{\eta_0(t-s)}\norm{y_{\delta}(s,x)-y_{\tau}(s,x)}_Eds\\
&+M_0L_R(\delta-\tau)\int^t_0e^{\eta_0(t-s)}\norm{F(y_{\delta,\theta}(s,x)+f(s))}_E+\norm{F(y_{\tau,\theta}(s,x)+f(s))}_Eds\\
&+M_0L_R(\delta-\tau)\int^t_0e^{\eta_0(t-s)}\norm{y_{\delta,\theta}(s,x)+f(s)}_E+\norm{y_{\tau,\theta}(s,x)+f(s)}_Eds.
\end{align*}
}
By Hypotheses \ref{Hyp2}, \eqref{stidE} and the Gronwall inequality there exists a constant $C:=C(\zeta,x,F,T,A)$ such that
\begin{align*}
\norm{y_{\delta}(t,x)-y_{\tau}(t,x)}_E\leq C(\delta+\tau),
\end{align*}
so there exists $y'(\cdot,x)\in C((0,T],E)$ that verifies \eqref{convE}. Since $E$ is continuously embedded in ${H}$ we have that $y'(\cdot,x)=y(\cdot,x)$. Moreover by \eqref{stidE}, \eqref{lipdE} and\eqref{convE} we obtain \eqref{stiE} and \eqref{lipE}.

To prove that $y(\cdot,x)$ is a mild solution of \eqref{eq} it is sufficient to prove that for every $t\in [0,T]$ we have
\begin{equation}\label{mildFFE}
\lim_{\delta{\rightarrow} 0}\int^t_0\norm{F_\delta(y_\delta(s,x)+f(s))-F(y(s,x)+f(s))}_Eds=0
\end{equation}
Recalling that ${\rm Dom}(F)=E$, by \eqref{cji}, \eqref{lipJ} and \eqref{convE} we have
\begin{align*}
\lim_{\delta\rightarrow 0}\norm{J_{\delta}(y_\delta(t,x)+f(t^-))- y(t,x)-f(t^-)}_E\leq 
\end{align*}
since $F_{|E}:E{\rightarrow} E$ is continuous, then
\begin{align}\label{convFE}
\lim_{\delta{\rightarrow} 0}\norm{F(J_{\delta}(y_\delta(t,x)+f(t^-)))-F(y(t,x)+f(t^-))}_E=0.
\end{align}
Finally by \eqref{convFE} and the Dominated Convergence theorem we obtain \eqref{mildFFE} and so $y(\cdot,x)$ is a mild solution of \eqref{eq}, for every $x\in E$.

\textbf{Uniqueness}  
To prove the uniqueness we follow the procedure presented in \cite[Proposition 7]{Mar-Roc2010}. Let $x\in E$, $u(\cdot,x)$ and $v(\cdot,x)$ be two mild solution of \eqref{eqFO}. Let $\{e_k\}_{k\in\N}\subseteq E$ be an orthonormal basis of ${H}$. Let $\lambda\in\rho(A)$ and let $0<\eps<1$. We recall that $((\lambda\Id-\eps A)^{-1})^*=(\lambda\Id-\eps A^*)^{-1}$ and since $A$ is the infinitesimal generator of a strongly continuous semigroup, then, by the Hille-Yosida theorem $\lambda\in\rho(\eps A)$, for every $0<\eps<1$. For every $k\in\N$,  we consider the function $z:[0,T]\rightarrow \R$ defined by
{\small
\begin{align*}
z_k(t)&=\scal{u(t,x)-v(t,x)}{(\lambda\Id-\eps A^*)^{-1}e_k}_{H}\\
&=\int_0^t\scal{e^{(t-s)A}(\lambda\Id-\eps A)^{-1}(F(u(s,x)+f(s))-F(v(s,x)+f(s)))}{e_k}_{H} ds
\end{align*}
}
Noting that both Hypotheses \ref{Hyp1} and Hypotheses \ref{Hyp2} implies that for every $k\in\N$ the function $x\rightarrow\scal{F(x)}{e_k}_{\mathcal{K}}$ is continuous from $E$ to $\R$, by the same arguments used in the proof of Proposition \ref{soldt}, the function $t\rightarrow z_k(t)$ is left-differentiable for every $k\in\N$. Hence for every $t\in [0,T]$ we have
{\footnotesize
\begin{align*}
&\scal{(\lambda\Id-\eps A^*)^{-1}(u(t,x)-v(t,x))}{e_k}^2_{H}\leq \\
&\leq\int_0^t\scal{A(\lambda\Id-\eps A)^{-1}u(s,x)-v(s,x)}{e_k}_{H}\scal{(\lambda\Id-\eps A)^{-1}(u(s,x)-v(s,x))}{e_k}_{H} ds\\
&+\int_0^t\scal{(\lambda\Id-\eps A)^{-1}(F(u(s,x)+f(s))-F(v(s,x)+f(s)))}{e_k}_{H}\scal{(\lambda\Id-\eps A)^{-1}(u(s,x)-v(s,x))}{e_k}_{H} ds,
\end{align*}
}
summing up over $k\in\N$ and using the dissipativity of $A$ we obtain
{\small
\begin{align}\label{eps}
&\norm{(\lambda\Id-\eps A^*)^{-1}(u(t,x)-v(t,x))}^2\leq \zeta_A\int_0^t\norm{(\lambda\Id-\eps A^*)^{-1}(u(s,x)-v(s,x))}^2ds\\
&+\int_0^t\scal{(\lambda\Id-\eps A)^{-1}(F(u(s,x)+f(s))-F(v(s,x)+f(s)))}{(\lambda\Id-\eps A)^{-1}(u(s,x)-v(s,x))}_{H}\ ds\notag
\end{align}
}
 where $\zeta_A$ is the constant given by Hypotheses \ref{EU2}\eqref{EU2.5}. Letting $\eps\rightarrow 0$ in \eqref{eps} and using the dissipativity of $F$ we get
 \begin{align*}
\norm{u(t,x)-v(t,x)}^2&\leq \zeta\int_0^t\norm{(u(s,x)-v(s,x))}^2ds,
\end{align*}
where $\zeta$ is given by \eqref{zeta}, finally by the Gronwall inequality we obtain the uniqueness.
\end{proof}

\end{document}